\def\ps@pprintTitle{%
 \let\@oddhead\@empty
 \let\@evenhead\@empty
 \def\@oddfoot{}%
 \let\@evenfoot\@oddfoot}
\tikzset{snake it/.style={decorate, decoration=snake}}
\newcommand{\dir}[1]{\overrightarrow{#1}}
\definecolor{gold}{RGB}{255,215,0}
\definecolor{softBlack}{RGB}{45, 47, 49}
\definecolor{creamWhite}{RGB}{245,244,241}
\definecolor{softGray}{RGB}{220, 216, 214}
\definecolor{brick}{RGB}{232, 48, 48}
\newcommand{\typesetoperator}[1] { \mathbf{#1} }
\DeclareMathOperator{\tw}{\typesetoperator{tw}}
\DeclareMathOperator{\rk}{\typesetoperator{rk}}
\DeclareMathOperator{\rw}{\typesetoperator{rw}}
\DeclareMathOperator{\bw}{\typesetoperator{bw}}
\DeclareMathOperator{\bcrk}{\typesetoperator{bcrk}}
\DeclareMathOperator{\dbw}{\typesetoperator{dbw}}
\DeclareMathOperator{\reach}{\typesetoperator{Reach^2}}
\DeclareMathOperator{\dred}{\typesetoperator{DRED^2}}
\DeclareMathOperator{\MSO}{\typesetoperator{MSO}}
\DeclareMathOperator{\np}{\typesetoperator{NP}}
\DeclareMathOperator{\fpt}{\typesetoperator{FPT}}
\DeclareMathOperator{\xp}{\typesetoperator{XP}}
\newtheorem{theorem}{Theorem}
\newtheorem{lemma}[theorem]{Lemma}
\newtheorem{corollary}[theorem]{Corollary}
\newdefinition{definition}[theorem]{Definition}
\begin{document}

\begin{frontmatter}

\title{Directed branch-width: A directed analogue of tree-width.}

\author{Benjamin Merlin Bumpus\corref{cor1}\fnref{fn1}}\ead{benjamin.merlin.bumpus@gmail.com}\ead[url]{https://orcid.org/0000-0002-8686-2319}
\author{Kitty Meeks\corref{cor1}\fnref{fn2}}\ead{kitty.meeks@glasgow.ac.uk}\ead[url]{https://orcid.org/0000-0001-5299-3073}
\author{William Pettersson\corref{cor1}\fnref{fn3}}\ead{william.pettersson@glasgow.ac.uk}\ead[url]{https://orcid.org/0000-0003-0040-2088}

\address{(1) University of Florida, Computer \& Information Science \& Engineering, Florida, USA. \\
(2,3) School of Computing Science, University of Glasgow, UK}

\fntext[fn1]{Supported by EPSRC Doctoral Training Account EP/N509668/1. This author also received funding from the DARPA ASKEM and Young Faculty Award programs through grants HR00112220038
and W911NF2110323.}
\fntext[fn2]{Supported by a Royal Society of Edinburgh Personal Research Fellowship, funded by the Scottish Government. For the purpose of open access, the author has applied a Creative Commons Attribution (CC BY) licence to any Author Accepted Manuscript version arising.}
\fntext[fn3]{Supported by EPSRC grant EP/P028306/1.}

\cortext[cor1]{Corresponding author}

\begin{abstract}
Gurski and Wanke showed that a graph class $\mathcal{C}$ has bounded tree-width if and only if its associated class of directed line graphs has bounded clique-width. Inevitably -- asking whether this relationship lifts to directed graphs -- we introduce a new digraph width measure: we obtain it by investigating digraphs whose directed line graphs have bounded cliquewidth. Thus, to generalize Gurski and Wanke's aforementioned result, we introduce a natural generalization of branch-width to digraphs and we name it accordingly. 

Directed branch-width is a \textit{genuinely directed} width-measure insofar as it cannot be used to bound the value of the underlying undirected tree-width. Despite this, the two measures are still closely related: the directed branch-width of a digraph $D$ can differ from the branch-width of its underlying undirected graph only at sources and sinks. This relationship allows us to extend a range of algorithmic results from directed graphs with bounded underlying treewidth to the strictly larger class of digraphs having bounded directed branch-width. 
\end{abstract} 

\begin{keyword}
Digraph \sep Line-graph \sep Branch-width \sep Tree-width \sep Rank-width \sep Clique-width \sep Parameterized Complexity \sep Hamilton Path \sep Max Cut 
\end{keyword}

\end{frontmatter}



\newpage

\section{Introduction}\label{sec:Intro}
One of the most important graph invariants is tree-width, which roughly 
measures how `tree-like' the global connectivity of any given graph is. Tree-width has many applications in parameterized complexity~\cite{cygan2015parameterized, flum2006parameterized, courcelle1990monadic} and
has played a key part in the proof of the celebrated Robertson-Seymour graph minor
theorem~\cite{RobertsonXX}.

Tree-width is also useful when graphs are directed: if we wish to solve a problem on a class of digraphs, we can hope for the tree-width of the underlying graph (i.e. the graph obtained by ignoring edge directions) to be bounded so that we may make use of its vast, accompanying algorithmic toolbox. But consider for example the Hamilton Path problem. It is $\np$-complete on undirected graphs but it is polynomial-time solvable on directed acyclic graphs (a class with unbounded underlying tree-width). Thus one observes that certain orientations of edges can actually make some problems tractable even on classes of digraphs with unbounded underlying tree-width. This motivates the search for a truly directed analogue of tree-width that can make such distinctions.

It is, however, not at all obvious how to obtain such an analogue. Indeed this has been an active research topic since the $1990$s and it has stimulated the research community to define numerous directed analogues of tree-width~\cite{kreutzer2018}. All of these digraph width measures (which we shall call `\emph{tree-width-inspired}' measures\footnote{Following the terminology in~\cite{kreutzer2018}, we do \textbf{not} include tree-width itself in this list.}, following Kreutzer and Kwon's terminology~\cite{kreutzer2018}) were defined by generalizing a characterization of tree-width in terms of cops-robber games; they include \emph{directed tree-width}\cite{dtw}, \emph{\texttt{DAG}-width}\cite{berwanger2012dag}, \emph{Kelly-width}\cite{hunter2008digraph} and \emph{\texttt{D}-width}\cite{safari2005d}. Unfortunately, despite being very useful in some cases, in general all tree-width-inspired measures face considerable algorithmic shortcomings. For example, they are all bounded on the class of \texttt{DAGs}, a class for which some natural problems remain $\np$-complete (e.g. directed Max-Cut~\cite{lampis2008algorithmic}). Indeed, it is known~\cite{ganian2010arethere} that, unless $\np \subseteq \mathbf{P}\setminus \mathbf{poly}$, certain algorithmic shortcomings are unavoidable by any digraph width measure which shares specific properties with any tree-width inspired measure (in particular there cannot be a Courcelle-like theorem parameterized by any one of these digraph width measures).

Another notable class of digraph width measures is that of `\emph{rank-width-inspired}' measures~\cite{kreutzer2018}. Rank-width is a graph invariant which measures how easily a given graph can be recursively decomposed into small parts via low-rank edge-cuts. Whenever a class of graphs has bounded tree-width, then its rank-width is also bounded. However the converse may fail. For example the classes of all cliques or of all complete bipartite graphs have bounded rank-width, but unbounded tree-width. There are several directed analogues of rank-width, including \emph{clique-width} (defined on digraphs from the start~\cite{courcelle1993}), \texttt{NLC}-width~\cite{gurski2016directedNLC} and \emph{bi-cut-rank-width}~\cite{kante2011}. These measures are unbounded on \texttt{DAGs} and they admit an algorithmic meta-theorem for problems definable in a more restricted fragment of logic~\cite{courcelle2000}. However, this fragment of logic does not have enough expressive power to describe all the problems which are tractable on classes of bounded undirected tree-width. In fact the directed versions of the Hamilton Path and Max-Cut problems (both tractable on classes of bounded underlying tree-width) are $\mathbf{W[1]}$-hard when parameterized by any of the foregoing tree-width- or rank-width-inspired measures~\cite{lampis2008algorithmic, fomin2010intractability}.  

As we mentioned, known directed analogues of tree-width are obtained by generalizing the `cops-robber games' definition of tree-width (whereby tree-width is defined as a number of tokens needed by one of the players to win this pursuit-evasion game; see Section~\ref{sec:dbw_properties}). In contrast, our incipit has little to do with cops-robbers games: we start from a result of Gurski and Wanke~\cite{gurski2007} which states that graph classes of bounded tree-width are precisely those classes of graphs whose line graphs have bounded clique-width. 

\textit{Our contribution} is thus born by asking whether it is possible to lift Gurski and Wanke's aforementioned result from graphs to directed graphs. We answer this question in the affirmative: we introduce the new notion of \emph{directed branch-width} and show the following theorem which establishes an equivalence between digraph classes of of bounded directed branch-width and digraph classes whose associated class of \emph{directed line graphs}\footnote{A directed line graph of a digraph $G$ is the directed graph having $E(G)$ as its vertices and directed paths of length two as its edges.} have bounded directed clique-width. 

\begin{theorem}\label{thm:dbw_and_directed_line-graph}
A class $\mathcal{C}$ of digraphs without parallel edges has bounded directed branch-width if and only if the class directed line-graphs of $\mathcal{C}$ has bounded bi-cut-rank-width. Specifically, for any digraph $D$ without parallel edges, we have 
\[\bcrk(\vec{L}(D))/2 - 1 \: \leq \: \dbw(D) \: \leq \: 8(1 + 2^{\bcrk(\vec{L}(D))}).\]
\end{theorem}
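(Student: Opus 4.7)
The plan is to exploit the fact that any layout on $E(D)$ is simultaneously a candidate directed branch decomposition of $D$ and a candidate bi-cut-rank decomposition of $\vec{L}(D)$, since $V(\vec{L}(D)) = E(D)$ and both kinds of decomposition are labelled trees of maximum degree three with leaves indexed by this common set. The natural strategy is therefore to take a decomposition witnessing one of the two widths and re-interpret the \emph{same} tree as a witness for the other, then compare the two order functions tree-edge by tree-edge.

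The key technical lemma I would prove is a vertex-separator versus matrix-rank identity. Fix an edge subset $X \subseteq E(D)$ and let $M$ be the $\mathbb{GF}(2)$-adjacency matrix of $\vec{L}(D)$. A row of $M[E(D)\setminus X,\, X]$ indexed by an edge $e = \dir{wv} \in E(D) \setminus X$ has a $1$ in column $f \in X$ precisely when $f = \dir{vz}$ for some $z$; hence the row depends only on the head $v$ of $e$ and is non-zero exactly when $v$ has some out-edge in $X$, i.e.\ exactly when $v \in S_X^V$. Distinct vertices in $S_X^V$ produce rows with pairwise disjoint supports (edges with different tails are never equal), so these rows are linearly independent, yielding
\[ \rk(M[E(D)\setminus X,\, X]) = |S_X^V| \quad\text{and, symmetrically,}\quad \rk(M[X,\, E(D)\setminus X]) = |S_{E(D)\setminus X}^V|. \]
The no-parallel-edges hypothesis is what makes the ``row depends only on $v$'' reduction lossless and underwrites the disjoint-support independence.

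Combining this lemma with the elementary sandwich $|A \cup B| \leq |A|+|B| \leq 2|A \cup B|$ shows that, at every edge of a common decomposition tree inducing the edge partition $(E(D)\setminus X, X)$, the two order functions satisfy
\[ f_D(X) \;\leq\; \rk(M[E(D)\setminus X,\, X]) + \rk(M[X,\, E(D)\setminus X]) \;\leq\; 2\, f_D(X). \]
Taking an optimal tree for one measure and re-using it as a layout for the other then immediately gives $\dbw(D) \leq \bcrk(\vec{L}(D)) \leq 2\,\dbw(D)$, which in particular implies both inequalities claimed in the theorem; the slack in the stated $8(1 + 2^{\bcrk(\vec{L}(D))})$ upper bound and the additive $-1$ in the lower bound comfortably absorb degenerate cases (for instance $|E(D)| \leq 1$, where the branch-width convention forces the measure to zero). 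The main obstacle is the rank identity itself: verifying that the no-parallel-edges hypothesis is both used and sufficient, and double-checking that diagonal entries of $M$ (self-loops of $\vec{L}(D)$ that could arise from self-loops of $D$) never contaminate the off-diagonal blocks $M[E(D)\setminus X,\, X]$ and $M[X,\, E(D)\setminus X]$ that control the width.
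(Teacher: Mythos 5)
Your proof is correct, and it takes a genuinely different --- and in fact sharper --- route than the paper's. The central observation, that in $M[E(D)\setminus X,\,X]$ the row indexed by $e=\dir{wv}$ has support $\{f \in X : f \text{ has tail } v\}$, so that nonzero rows are constant on head-classes and distinct head-classes have disjoint (hence $\mathbb{GF}(2)$-independent) supports, really does yield the exact identity $\rk(M[E(D)\setminus X,\,X]) = |S_X^V|$ and its mirror image. Combined with $|A\cup B| \le |A|+|B| \le 2|A\cup B|$ and the fact that a layout of $E(D)$ serves simultaneously as a directed branch decomposition of $D$ and a bi-cut-rank decomposition of $\vec{L}(D)$, this gives $\dbw(D) \le \bcrk(\vec{L}(D)) \le 2\dbw(D)$, which implies both stated inequalities with a great deal of room to spare. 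The paper instead argues through an auxiliary notion of $(\lambda_1,\lambda_2)$-consistent labellings: one direction builds an explicit labelling from the vertex separator (giving $\bcrk(\vec{L}(D)) \le 2(\dbw(D)+1)$), while the converse passes from rank $k$ to at most $1+2^{k}$ distinct row types and then bounds the separator by a case analysis on stars versus pairs of non-incident edges, which is where the exponential $8(1+2^{k})$ and the no-parallel-edges hypothesis enter. Your rank identity short-circuits that exponential loss precisely because, for directed line graphs, the distinct nonzero rows of the cut matrix are automatically linearly independent rather than merely at most $2^{k}$ in number. One small correction: contrary to your remark, the identity does not actually use the absence of parallel edges --- duplicated rows or columns change neither the rank nor the separator, and disjointness of supports follows from each edge having a unique tail --- so your argument in fact proves the equivalence without that hypothesis.
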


Directed branch-width turns out to be a natural generalization of the well-known notion of branch-width~\cite{robertsonX} (a graph invariant linearly-equivalent to tree-width) which opens new doors for the study of digraph connectivity (q.v. Section~\ref{sec:conclusion}) and directed line-graphs. As it turns out, although we show that underlying branch-width cannot be bounded in terms of our new measure (q.v. Theorem~\ref{thm:dbw_not_tw_bounding}), the two notions are nevertheless closely related: roughly-speaking (q.v. Lemma~\ref{lemma:dbw_invariant_source_sink_identification} and Corollary~\ref{corollary:dbw_bw_number_of_sources_and_sinks}), we prove that the directed branch-width of a digraph $D$ can differ from the branch-width of its underlying undirected graph only at sources and sinks.

Given the aforementioned importance of with measures for algorithmic applications, we would be remiss were we not to explore the algorithmic applications of directed branch-width. Thus, by leveraging connections between our new parameter and underlying tree-width, we show that the Hamilton Path problem and the directed Max-Cut problem are in $\fpt$ parameterized by directed branch-width (Corollary~\ref{corollary:fast_algs}). While this only generalizes the tractability with respect to underlying treewidth in a specific and limited way, it is nonetheless interesting given the fact that both of these problems are $\mathbf{W[1]}$-hard when parameterized by any tree-width-inspired or rank-width-inspired measure~\cite{lampis2008algorithmic, fomin2010intractability}. More generally we show (Corollary~\ref{corollary:our_courcelle}) that there exists an $\fpt$-time algorithm parameterized by directed branch-width for the model-checking problem on a restricted variant of the monadic second order logic of graphs (roughly this variant consists of those formulas whose corresponding model-checking problem is invariant under certain kinds of vertex-identification rules). Our results partially resolve some of the aforementioned questions about finding an `algorithmically useful' tree-width analogue in the sense that we provide a single width-measure which can be used to describe classes of digraphs for which many problems become tractable. However, since our algorithmic results lean heavily on the tractability of problems on classes of bounded underlying treewidth via standard techniques, all material relating to these algorithmic considerations is found in the Appendix. Given the graph-theoretic focus of the present paper, we leave it as further work to search for algorithmic results that make more native use of directed branch-width.

\section{Background}\label{sec:notation}
Throughout, for any natural number $n$, let $[n]$ denote the set $\{1, \ldots, n\}$. We denote by $xy$ the undirected edge joining two vertices $x$ and $y$ and we denote the directed edge starting at $x$ and ending at $y$ by $\dir{xy}$. For any vertex $x$ in a digraph $D$, we shall write $N_D^+(x)$ and $N_D^-(x)$ to mean the sets $\{y \in V(D): \dir{xy} \in E(D)\}$ and $\{w \in V(D): \dir{wx} \in E(D)\}$ respectively of \emph{out-neighbors} and \emph{in-neighbors} of $x$ in $D$. Furthermore, we shall denote by $N_D(x)$ the set $N_D^+(x) \cup N_D^-(x)$ of \emph{neighbors} of $x$. Note that, if the digraph $D$ is clear from context, then we shall drop the subscripts and simply write $N(x)$, $N^+(x)$ and $N^-(x)$. Given a directed graph $D$, we denote by $u(D)$ the \emph{underlying undirected graph} of $D$ which is obtained by making all of the edges of $D$ undirected. We say that a subset $A$ of vertices is \emph{complete} to another vertex-subset $B$ (disjoint from $A$) if every vertex of $A$ is adjacent to every vertex of $B$. 

The \emph{line-graph} of a graph $G$ was introduced by Whitney~\cite{whitney1992congruent} and is the graph defined as $(E(G), \{ef : e, f \in E(G) \text{ s.t. } e \cap f \neq \emptyset\})$. We introduce the concept of a \emph{directed line-graph} as a directed analogue of Whitney's definition. Just as the line-graph of an undirected graph encodes which pairs of edges can occur in succession in a path, the directed line-graph of a digraph $D$ encodes which edge-pairs can occur in succession in a directed path. 

\begin{definition}\label{def:dir_line_gr}
The \emph{directed line-graph} $\vec{L}(D)$ of a digraph $D$ is the digraph  
\[\vec{L}(D) := \bigl ( E(D), \bigl \{ \dir{ef}: \exists \{w,x,y\} \subseteq V(D) \text{ such that } e = \dir{wx} \text{ and } f = \dir{xy} \bigr\} \bigr ).\]
\end{definition}

Now we define the directed vertex and edge separators corresponding to edge and vertex-partitions respectively.

\begin{definition}\label{def:vert-sep}
Let $D$ be a directed graph, let $(V(D)\setminus A,A)$ be a partition of the vertices of $D$ and let $(E(D)\setminus B,B)$ be a partition of the edges of $D$. We call the sets 
\begin{align*}
S_A^E &:= \{\dir{xy} \in E(D): x \in V(D)\setminus A \text{ and } y \in A\} \text{ and}\\
S_B^V &:= \{y \in V(D): \exists x, z \in V(D) \text{ with } \dir{xy} \in E(D)\setminus B \text{ and } \dir{yz} \in B\} 
\end{align*}
respectively the \emph{edge separator} and \emph{vertex separator} corresponding to the partitions $(V(D)\setminus A,A)$ and~$(E(D)\setminus B,B)$. The \emph{directed order of an edge (or vertex) partition} is the number of elements contained in the vertex (or edge) separator associated with that partition (for example the order of $(E(D)\setminus B,B)$ is $|S_B^V|$). 
\end{definition}

Every edge of a tree $T$ partitions the set \emph{$\ell(T)$ of the leaves of $T$} into two sets. Given an edge $xy$ in a tree $T$ and letting $Y$ be the set of all leaves found in the connected component of $T - xy$ which contains the vertex $y$, we call the partition $\{ \ell(T) \setminus Y, Y \}$ the \emph{leaf-partition} of $T$ corresponding to $xy$. 

For any graph-theoretic notation not defined here, we refer the reader to~\cite{Diestel2010GraphTheory}.

\subsection{Tree-width-inspired measures.}
Tree-width is a measure of global connectivity and `tree-likeness' which was introduced independently by many authors~\cite{Bertele1972NonserialProgramming, Halin1976S-functionsGraphs, RobertsonII}. At an intuitive level, it can be thought of as a measure of how how far a graph is from being a tree; for example edge-less graphs have tree-width $0$, forests with at least one edge have tree-width $1$ and, for $n > 1$, $n$-vertex cliques have tree-width $n-1$.

\begin{definition}\label{def:treedecompositionDefinition}
Let $G$ be a graph, let $T$ be a tree and let $(V_t)_{t \in T}$ be a sequence of vertex subsets (called bags) of $G$ indexed by the nodes of $T$. The pair $(T, (V_t)_{t \in T})$ is a \emph{tree decomposition} of $G$ if it satisfies the following axioms:
\begin{enumerate}
\item every vertex and every edge of $G$ is contained in at least one bag of $(T, (V_t)_{t \in T})$
\item for any vertex $x$ in $G$, the induced subgraph $T[\{t \in T : x \in V_t\}]$ is connected.
\end{enumerate}
\end{definition}

The \emph{width} of a tree decomposition $(T, (V_t)_{t \in T})$ of a graph $G$ is defined as $\max_{t \in T} |V_t| - 1$. The \emph{tree-width} $\tw(G)$ of $G$ is the minimum possible width of any tree decomposition of $G$. 

There are several equivalent definitions of tree-width. One unified way of defining both undirected tree-width and all of the tree-width-inspired measures for digraphs~\cite{kreutzer2018} is via a family of pursuit-evasion games called \emph{cops-robber games} which are played on either directed or undirected graphs. These games are played on any (directed) graph with two players: the \emph{Sheriff} and the \emph{Villain}. The Sheriff controls a set of cops which each occupy a single vertex and the Villain controls a single robber which also occupies a single vertex. The Sheriff wins if a cop is ever placed on a vertex occupied by the robber and looses otherwise. Different cops-robber games can be defined by varying how the cops and the robbers can be moved and by the information available to each player. Depending on the game variant, undirected tree-width and each tree-width-inspired width measure can be defined as the number of cops needed to to guarantee a win for the Sheriff~\cite{cygan2015parameterized, kreutzer2018}. For a formal description of these games, see Section \ref{subsec:dbw_comparison_to_other_measures}.

\subsection{Layouts: branch-width and rank-width.}
A notion closely related to tree-width, which we use throughout this paper, is that of \emph{branch-width}. 

\begin{definition}[\cite{robertsonX}]
A \emph{branch decomposition} of a graph $G$ is a pair $(T,\tau)$ where $T$ is a tree of maximum degree three and $\tau$ is a bijection from the leaves of $T$ to $E(G)$. The \emph{order of an edge} $e$ of $T$ is the number of vertices $v$ of $G$ such that there are leaves $t_1$, $t_2$ in $T$ in different components of $T-e$, with $\tau(t_1)$, $\tau(t_2)$ both incident with $v$. The \emph{width} of $(T,\tau)$ is the maximum order of the edges of $T$. The \emph{branch-width} of $G$ (written $\bw(G)$) is the minimum possible width of any branch-decomposition of $G$.
\end{definition}

It is known that a class of undirected graphs has bounded tree-width if and only if it has bounded branch-width.

\begin{theorem}[\cite{robertsonX}]\label{thm:bounded_tw_bounded_bw}
For any graph $G$, $\bw(G) \leq \tw(G) + 1 \leq \max \{1, 3\bw(G)/2\}$.
\end{theorem}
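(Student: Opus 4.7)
The theorem has two inequalities: $\bw(G) \le \tw(G)+1$ and $\tw(G)+1 \le \max\{1, 3\bw(G)/2\}$. My plan is to prove each by exhibiting an explicit transformation from one kind of decomposition into the other and bounding the width of the result.

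For $\bw(G) \le \tw(G)+1$, I would start with a tree decomposition $(T, (V_t)_{t \in T})$ of $G$ of width $k = \tw(G)$. The idea is to produce a tree $T'$ whose leaves are in bijection with $E(G)$: for each edge $uv \in E(G)$, by the coverage axiom there exists a node $t \in T$ with $u, v \in V_t$, and I would attach a new pendant leaf $\ell_{uv}$ to $t$, setting $\tau(\ell_{uv}) = uv$. The resulting tree is not yet subcubic, so I would suppress degree-two vertices and split any high-degree vertex by inserting short paths of degree-three nodes; neither operation increases the order of any edge. For an edge $f$ of $T'$ that corresponds to an original edge $st$ of $T$, a vertex $v$ contributing to the order of $f$ must have incident $G$-edges appearing in pendants on both sides, which by the connectivity axiom forces $v \in V_s \cap V_t$; for newly introduced edges incident to pendants, the analogous argument works. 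In either case the separating set has size at most $k+1$.

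For $\tw(G)+1 \le \max\{1, 3\bw(G)/2\}$ I would dualize the construction: given a branch decomposition $(T,\tau)$ of $G$ of width $w = \bw(G)$, I would index the tree decomposition by the nodes of $T$. For each internal node $t$ with incident edges $e_1, e_2, e_3$, let $X_{e_i}$ denote the set of vertices witnessing the order of $e_i$ (of size at most $w$), and set $V_t := X_{e_1}\cup X_{e_2}\cup X_{e_3}$; for each leaf $t$ with $\tau(t)=uv$, set $V_t := \{u,v\}$. Coverage and connectivity will follow from the fact that for each vertex $v$ of $G$ the set of nodes $t$ whose associated subtree of $T$ contains a leaf with a $G$-edge incident to $v$ is itself a connected subtree of $T$. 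To bound $|V_t|$ for an internal $t$, the key counting step is that every $v \in V_t$ must have incident $G$-edges in at least two of the three subtrees of $T-t$, and hence lies in at least two of the $X_{e_i}$. Therefore $2|V_t| \le |X_{e_1}| + |X_{e_2}| + |X_{e_3}| \le 3w$, yielding $|V_t| \le 3w/2$ and so $\tw(G) + 1 \le 3w/2$.

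The main obstacle I expect is not in either construction itself but in handling boundary subtleties: in the first direction one has to arrange the pendant attachments and the degree reduction without inflating the order of any resulting tree-edge, and in the second direction one must carefully verify the tree-decomposition axioms and treat the degenerate cases where $G$ has no or very few edges (which is precisely where the $\max\{1, \cdot\}$ on the right-hand side becomes necessary). The factor $3/2$ itself emerges cleanly from the \emph{two-out-of-three} counting argument described above, so no further combinatorial idea is needed beyond it.
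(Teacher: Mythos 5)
The paper gives no proof of this statement; it is quoted from Robertson and Seymour's Graph Minors X, and your two constructions are exactly the standard ones from that source: pendant leaves hung off bags containing each edge for $\bw(G)\le\tw(G)+1$, and bags formed from the unions of the three middle sets at each internal node, with the two-out-of-three counting argument, for the reverse direction. Both constructions and both counting steps are sound, including the use of the connectivity axiom to force a separating vertex into $V_s\cap V_t$, and the observation that any $v\in V_t$ must have incident edges in at least two of the three components of $T-t$ and hence lies in at least two of $X_{e_1},X_{e_2},X_{e_3}$.

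The one genuine issue is in the degenerate cases, and it cuts the opposite way from what you suggest. Your second construction assigns the bag $\{u,v\}$ to the leaf carrying the edge $uv$, so whenever $G$ has at least one edge the decomposition you build has width at least $1$, and what your argument actually establishes is $\tw(G)+1\le\max\{2,\,3\bw(G)/2\}$ --- which is the correct classical statement. The inequality as printed here, with $\max\{1,\,3\bw(G)/2\}$, is false for $\bw(G)\in\{0,1\}$ in the presence of edges: a single edge has $\bw=0$ and $\tw+1=2>1$, and a star $K_{1,n}$ with $n\ge 2$ has $\bw=1$ and $\tw+1=2>3/2$. So your closing remark that the $\max\{1,\cdot\}$ ``becomes necessary'' in the sparse cases is backwards --- it is insufficient there, and no proof can close that gap; the constant should be $2$ (or the statement restricted to $\bw(G)\ge 2$). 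With that correction your proof is complete modulo the routine verifications you already flag: making the tree subcubic without increasing any order (which works because every new edge's separator still lies inside the bag $V_t$ being split), covering isolated vertices, and checking connectivity via the minimal subtree spanning the leaves carrying edges incident with a given vertex.
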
 

We note that, if a graph has fewer than two edges, then it has branch-width zero; furthermore, all stars have branch-width at most $1$ and all forests have branch-width at most $2$ (this follows by Theorem \ref{thm:bounded_tw_bounded_bw} since forests have tree-width at most $1$; the bound is tight since the three-edge path has branch-width $2$).

A branch-decomposition is a special case of the more general concept of a \emph{layout of a symmetric function} which can be applied even to structures which are not graphs. Given sets $A$ and $B$, a function $f: 2^A \to B$ is \emph{symmetric} if, for any $X \subseteq A$ we have $f(X) = f(A \setminus X)$.

\begin{definition}
Let $U$ be a finite set and let $f: 2^U \rightarrow \mathbb{Z}$ be a symmetric function. We say that the pair $(T,\beta)$ is a \emph{layout of $f$ on $U$} if $T$ is a tree of maximum degree three and $\beta$ is a bijection from the leaves of $T$ to the elements of $U$. 

Let $xy$ be an edge in $T$ and let $\{ \ell(T) \setminus Y, Y \}$ be the leaf-partition associated with $xy$. We define the \emph{order of the edge} $xy$ (denoted $||xy||_f$) to be the value of $f(\beta(Y))$. The \emph{width} of $(T,\tau)$ is the maximum order of all edges of $T$. The \emph{layout-$f$-width of $U$} is defined as the minimum possible width of any layout of $f$ on $U$.
\end{definition}

Using the concept of a layout, we can now give an alternative definition of the branch-width of a graph $G$: the branch-width of $G$ is the layout-$f$-width of $E(G)$ where $f$ maps any edge subset $X$ of $G$ to the number of vertices incident with both an edge in $E(G) \setminus X$ and an edge in $X$.

Whereas branch-width can be thought of as a global measure of vertex-connectivity, \emph{rank-width} (introduced in~\cite{oum2006approximating}) is intuitively a measure of global neighborhood similarity. All classes of bounded branch-width have also have bounded rank-width, but the converse is not true: for example the rank-width of any clique is $1$. Given a matrix $M$ over $\mathbb{GF}(2)$ whose rows and columns are indexed by some set $X$, we denote by $\rk(M)$ the rank of $M$ and, for $Y \subseteq X$, we denote by $\rk(M[X\setminus Y, Y])$ the rank of the sub-matrix of $M$ given only by the rows indexed by elements of $X \setminus Y$ and the columns indexed by the elements of $Y$. If $M$ is the adjacency matrix of a graph $G$ and $X$ is a vertex subset of $G$, then $\rk(M[V(G)\setminus X, X])$ describes the number of different kinds of edge-interactions between vertices in $V(G) \setminus X$ and those in $X$; Oum and Seymour used layouts to turn this local notion into a global one by defining rank-width as follows~\cite{oum2006approximating}. 

\begin{definition}[\cite{oum2006approximating}]
Let $M$ be the adjacency matrix over $\mathbb{GF}(2)$ of a graph $G$. The \emph{rank-width} $\rw(G)$ of $G$ is the layout-$f$-width of $V(G)$ where we define $f$ as the mapping $f: X \mapsto \rk(M[V(G)\setminus X, X])$. 
\end{definition}

Kant\'{e} and Rao generalized rank-width to digraphs by introducing the concept of a \emph{bi-cut-rank decomposition}~\cite{kante2011}. They did so by considering layouts of a function which takes into account the two possible directions in which edges can point in the directed edge separator associated with a vertex partition.

\begin{definition}[\cite{kante2011}]\label{def:bi-cut-rank-width}
Given a digraph $D$ and its adjacency matrix $M$ over $\mathbb{GF}(2)$, let $f(M[X])$ be the function defined as $f(M[X]) = \rk(M[V(D)\setminus X, X]) + \rk(M[X, V(D)\setminus X])$. The \emph{bi-cut-rank-width} of a digraph $D$ (denoted $\bcrk(D)$) is the layout-$f$-width of $V(D)$. 
\end{definition} 

For completeness we note that rank-width is closely related to a width-measure called \emph{clique-width} (defined in~\cite{courcelle1993}).

\begin{theorem}[\cite{gurski2016directedNLC},~\cite{oum2006approximating}]\label{thm:clique_width_equivalence}
A class of undirected graphs has bounded clique-width if and only if has bounded rank-width.
\end{theorem}

Gurski and Wanke showed that classes of bounded tree-width can be characterized via the rank-width of their line graphs~\cite{gurski2007}. (We note that this result was originally stated in terms of clique-width; by Theorem \ref{thm:clique_width_equivalence}, the following formulation is equivalent.) 

\begin{theorem}[\cite{gurski2007},\cite{oum2008rank}]\label{thm:gurski_wanke}
A class of undirected graphs has bounded tree-width if and only if the class of its line graphs has bounded rank-width.
\end{theorem}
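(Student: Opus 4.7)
The plan is to prove both directions of the equivalence separately by exploiting the identification $V(L(G)) = E(G)$, so that vertex layouts of $L(G)$ coincide with branch-decomposition-shaped objects on $G$. For the forward direction, I would suppose $\tw(G) \leq k$ and invoke Theorem \ref{thm:bounded_tw_bounded_bw} to obtain a branch decomposition $(T,\tau)$ of $G$ of width $w \leq 3(k+1)/2$. Viewing $\tau$ as a bijection $\ell(T) \to V(L(G))$, it serves as a layout of $L(G)$. Fix any edge of $T$ inducing a partition $(A, \overline{A})$ of $E(G)$, and let $S$ be the corresponding middle set of vertices of $G$, so $|S| \leq w$. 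For each $v \in S$ set $A_v = \{e \in A : v \in e\}$ and $\overline{A}_v = \{f \in \overline{A} : v \in f\}$. Since $G$ is simple, any two distinct edges share at most one vertex, so the bi-adjacency matrix $M[A,\overline{A}]$ of $L(G)$ satisfies $M[A,\overline{A}] = \sum_{v \in S} \mathbf{1}_{A_v} \mathbf{1}_{\overline{A}_v}^{T}$, with contributions from $v \notin S$ vanishing because one factor is zero. This exhibits $M[A,\overline{A}]$ as a sum of at most $|S|$ rank-one matrices, so $\rk_{\mathbb{F}_2} M[A,\overline{A}] \leq |S| \leq w$, giving $\rw(L(G)) \leq 3(\tw(G)+1)/2$.

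For the backward direction, I would suppose $\rw(L(G)) \leq k$ and let $(T, \beta)$ be a rank decomposition of $L(G)$ of width at most $k$. Identifying $\beta$ with a bijection $\ell(T) \to E(G)$, the pair $(T, \beta)$ is a candidate branch decomposition of $G$. It therefore suffices to bound the branch-decomposition order at each edge of $T$ by a function of $k$, which reduces to the following key lemma: there exists $g$ such that for every partition $(A, \overline{A})$ of $E(G)$ with $\rk_{\mathbb{F}_2} M[A, \overline{A}] \leq k$ in $L(G)$, the middle set $S$ of vertices of $G$ satisfies $|S| \leq g(k)$. Granted this lemma, $(T, \beta)$ witnesses $\bw(G) \leq g(k)$, and Theorem \ref{thm:bounded_tw_bounded_bw} yields $\tw(G) \leq 3g(k)/2 - 1$.

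The lemma is the main obstacle. The difficulty is that over $\mathbb{F}_2$ the rank-one summands $\mathbf{1}_{A_v}\mathbf{1}_{\overline{A}_v}^{T}$ indexed by $v \in S$ can cancel, so a large middle set does not a priori force a large rank. The argument must exploit the fact that $G$ is simple: any two middle vertices share at most one edge, so their characteristic vectors $\mathbf{1}_{A_v}$ and $\mathbf{1}_{A_{v'}}$ agree in at most one coordinate, which severely constrains the possible linear dependencies among the contribution matrices. I would argue by a Ramsey-type extraction, selecting a subfamily of size $\Omega(f(|S|))$ of middle vertices whose rank-one contributions are linearly independent over $\mathbb{F}_2$; for example, vertices each possessing a private edge on either side of the cut immediately yield independent summands, and a case split on the sizes of the $A_v$ and $\overline{A}_v$ across $S$ lets one isolate such a substructure. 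An alternative, less self-contained route is to pass through clique-width by invoking Theorem \ref{thm:clique_width_equivalence} and then citing the original Gurski--Wanke equivalence between bounded clique-width of line graphs and bounded tree-width of the underlying graph.
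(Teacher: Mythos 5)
First, note that the paper does not prove this statement at all: it is imported from \cite{gurski2007} and \cite{oum2008rank} (with Theorem \ref{thm:clique_width_equivalence} used to translate clique-width into rank-width), so the only thing in the paper to compare against is the proof of the directed analogue, Theorem \ref{thm:dbw_and_directed_line-graph}. Your forward direction is correct and is essentially Oum's argument, which is also the blueprint for the paper's Lemma \ref{lemma:dbw_to_brw}: writing $M[A,\overline{A}]$ as $\sum_{v\in S}\mathbf{1}_{A_v}\mathbf{1}_{\overline{A}_v}^{T}$ is legitimate because two distinct edges of a simple graph share at most one vertex and any shared vertex of a pair $(e,f)\in A\times\overline{A}$ lies in the middle set $S$, so the sum over $\mathbb{GF}(2)$ really is the bi-adjacency matrix and its rank is at most $|S|$. (Your bound $w\le 3(k+1)/2$ is obtained by reading Theorem \ref{thm:bounded_tw_bounded_bw} in the wrong direction --- the relevant inequality is simply $\bw(G)\le\tw(G)+1$ --- but the stated bound still holds, so this is cosmetic.)

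The backward direction, however, has a genuine gap: the ``key lemma'' bounding $|S|$ by a function of $\rk(M[A,\overline{A}])$ is precisely the hard content of the theorem, and your proposed Ramsey-type extraction of linearly independent rank-one summands is never carried out. Your observation that vertices with a private edge on each side of the cut yield an identity submatrix is true, but you give no argument that a large middle set forces many such vertices (or any other independent family), and it is not clear that a direct extraction of this kind gives the lemma at all. The argument that actually works --- and that the paper adapts to digraphs in Lemmas \ref{lemma:rank_to_lambda_consistent} and \ref{lemma:edge_to_vertex_seps} --- goes through the number of distinct rows rather than through independent summands: if $\rk(M[A,\overline{A}])\le k$ then $A$ splits into at most $2^k+1$ classes of edges having identical $L(G)$-neighborhoods in $\overline{A}$; for each class $A_i$ with nonempty neighborhood $B_i$, every edge of $A_i$ meets every edge of $B_i$, and in a simple graph such a ``complete'' pair of edge sets confines the shared vertices to at most $4$ (either $A_i$ has at most two edges, or it contains two disjoint edges and $B_i$ lives on their four endpoints, or $A_i$ is a star with at least three edges and $B_i$ must pass through its centre). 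Summing over classes gives $|S|\le 4(2^k+1)$, which is what your lemma needs (note the bound is exponential in $k$, another sign that an argument producing $|S|$-many independent summands is aiming at something stronger than what is known). Your fallback of citing Gurski--Wanke together with Theorem \ref{thm:clique_width_equivalence} is of course valid --- it is exactly what the paper does --- but as a self-contained proof the proposal is incomplete without the case analysis above.
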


Theorem \ref{thm:gurski_wanke} is of particular importance since it can be seen as a definition of classes of bounded tree-width. In Section \ref{sec:line_graphs} we will prove a directed analogue of Theorem \ref{thm:gurski_wanke}. 


\section{Proof of Theorem~\ref{thm:dbw_and_directed_line-graph}: Directed line graphs of bounded rank-width.}\label{sec:line_graphs}
In this section we introduce our new directed width measure, directed branch-width, and use it to generalize Theorem \ref{thm:gurski_wanke} to digraphs. Specifically we will prove Theorem~\ref{thm:dbw_and_directed_line-graph} which, stated in terms of graph classes, says: a class $\mathcal{C}$ of digraphs has bounded directed branch-width if and only if the class $\vec{L}(\mathcal{C})$ of directed line graphs of $\mathcal{C}$ has bounded bi-cut-rank-width. Note that all directed analogues of rank-width~\cite{kante2011} are bounded on the same graph classes as bi-cut-rank-width~\cite{kreutzer2018}, so this result can also be stated in terms of any rank-width-inspired measure.

\begin{figure}[h]
\centering
\begin {tikzpicture}[-latex , auto , node distance =1.2 cm and 1.2 cm , on grid ,
semithick ,
state/.style ={ circle ,top color =softGray , bottom color = creamWhite ,
draw, softBlack , text=softBlack , minimum width =0.2 cm}]
\node[state] (A) {$a$};
\node[state] (B) [right=of A] {$b$};
\node[state] (C) [right=of B] {$c$};
\node[state] (D) [below =of A] {$d$};
\node[state] (E) [below =of B, top color = brick!50] {$e$};
\node[state] (F) [below =of C, top color = brick!50] {$f$};
\node[state] (G) [below =of D] {$g$};
\node[state] (H) [below =of E] {$h$};
\node[state] (I) [below =of F] {$i$};

\path (A) edge (B);
\path (B) edge (C);
\path (D) edge (A);
\path (B) edge (E);
\path (F) edge (C);
\path (E) edge (D);
\path (F) edge [line width=0.50mm, color = brick, dotted] (E);
\path (G) edge (D);
\path (E) edge [line width=0.50mm, color = brick, dotted] (H);
\path (I) edge [line width=0.50mm, color = brick, dotted] (F);
\path (G) edge (H);
\path (I) edge [line width=0.50mm, color = brick, dotted] (H);

\node[state] (DE) [right =3.5cm of C ] {$\dir{ed}$};
\node[state] (AD) [right =of DE] {$\dir{da}$};
\node[state] (AB) [right =of AD] {$\dir{ab}$};
\node[state] (BE) [right =of AB] {$\dir{be}$};

\node[state] (t1) [below right =of DE] {};
\node[state] (t2) [below right =of AB] {};
\node[state] (t3) [below =of AB] {};
\node[state] (t4) [below =of t3] {};
\node[state] (t5) [left =of t4] {};
\node[state] (t6) [right =of t4] {};
\node[state] (t7) [left =of t5] {};
\node[state] (t8) [below =of t5] {};
\node[state] (t9) [below left =of t8] {};
\node[state] (t10) [right =of t8] {};

\node[state] (DG) [above =of t7] {$\dir{gd}$};
\node[state] (GH) [below =of t7] {$\dir{gh}$};

\node[state] (BC) [right =of t6] {$\dir{bc}$};
\node[state] (CF) [below =of t6] {$\dir{fc}$};

\node[state] (EH) [left =of t9, top color = brick!60] {$\dir{eh}$};
\node[state] (HI) [right =of t9, top color = brick!60] {$\dir{ih}$};
\node[state] (FI) [below =of t10, top color = brick!60] {$\dir{if}$};
\node[state] (EF) [below =of CF, top color = brick!60] {$\dir{fe}$};

\path (DE) edge [-] (t1);
\path (AD) edge [-] (t1);
\path (AB) edge [-] (t2);
\path (BE) edge [-] (t2);
\path (t1) edge [-] (t3);
\path (t2) edge [-] (t3);
\path (t3) edge [-] (t4);
\path (t4) edge [-] (t5);
\path (t4) edge [-] (t6);
\path (t5) edge [-] (t7);
\path (t5) edge [-, color = brick, line width=0.80mm] node[right =0.15 cm, color = softBlack] {$\xi$} (t8);
\path (t8) edge [-] (t9);
\path (t8) edge [-] (t10);
\path (DG) edge [-] (t7);
\path (GH) edge [-] (t7);
\path (BC) edge [-] (t6);
\path (CF) edge [-] (t6);
\path (EH) edge [-] (t9);
\path (HI) edge [-] (t9);
\path (FI) edge [-] (t10);
\path (EF) edge [-] (t10);
\end{tikzpicture}
\caption{
    An orientation $D$ of a $(3 \times 3)$-grid (left) and a directed branch decompositions of this grid (right).  Letting $X = \{ \protect\dir{eh},\protect\dir{ih},\protect\dir{if},\protect\dir{fe} \}$, the edge $\xi$ is associated with the edge partitions $(E(G) \setminus X, X)$ and $(X, E(G) \setminus X)$. These partitions are themselves respectively associated with the directed vertex separators $\{ e \}$ and $\{ e, f \}$.
}\label{fig:decomp_example}
\end{figure}
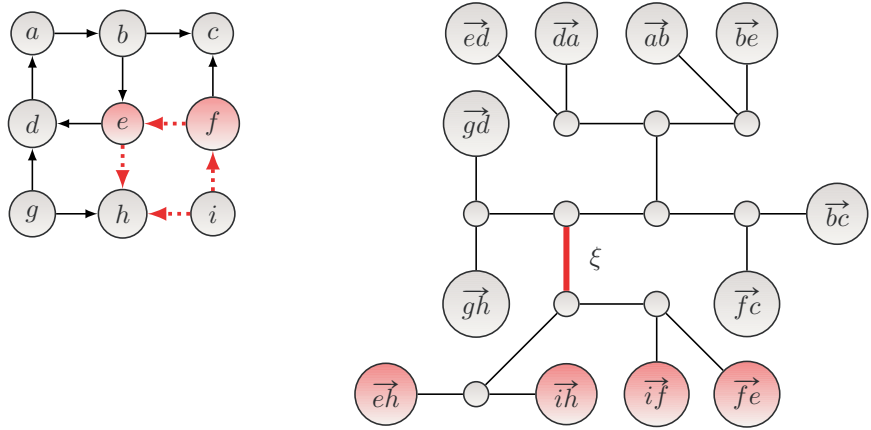

The definition of directed branch-width relies on the notion of directed vertex-separator. Recall from Definition \ref{def:vert-sep} that, for any edge-subset $X$ of a digraph $D$, $S_X^V$ and $S_{E(D)\setminus X}^V$ denote the directed vertex-separators corresponding to $(E(D)\setminus X, X)$ and $(X, E(D)\setminus X)$ respectively.

\begin{definition}[Directed branch-width]
For any digraph $D$, let $f_D$ be the function $f_D: 2^{E(D)} \to \mathbb{N}$ defined as $f_D(X) = |S_X^V \cup S_{E(D)\setminus X}^V|$.
We call any layout of $f_D$ on $E(D)$ a \emph{directed branch decomposition} of $D$.
The \emph{directed branch-width} of $D$, denoted $\dbw(D)$, is the layout-$f_D$-width of $E(D)$. (See Figure~\ref{fig:decomp_example} for an example.) 
\end{definition}

In view of proving Theorem~\ref{thm:dbw_and_directed_line-graph}, we shall first show (Lemma \ref{lemma:dbw_to_brw}) that if a class $\mathcal{C}$ has bounded directed branch-width then $\vec{L}(\mathcal{C})$ has bounded bi-cut-rank-width. Then we will obtain the converse of this statement in Lemma \ref{lemma:bcrk_dbw}. These results rely on an auxiliary notion of consistency between a labeling function and a vertex-partition. 

\begin{definition}\label{def:lambda-consistent}
Let $S$ be a set, $(A,B)$ be a vertex partition in a digraph $D$ and let $\lambda_A: A\rightarrow S$ and $\lambda_B: B \rightarrow S$ be functions. We say that \emph{$(A,B)$ is $(\lambda_A, \lambda_B)$-consistent} if 
\begin{itemize}
\item given vertices $a_1$, $a_2$ in $A$ with $\lambda_A(a_1) = \lambda_A(a_2)$ and a vertex $b$ in $B$ we have that $\dir{a_1b} \in E(D)$ if and only if $\dir{a_2b} \in E(D)$ and
\item given vertices $b_1$, $b_2$ in $B$ with $\lambda_B(b_1) = \lambda_B(b_2)$ and a vertex $a$ in $A$ we have that $\dir{ab_1} \in E(D)$ if and only if $\dir{ab_2} \in E(D)$.
\end{itemize}

\end{definition}

For clarity we emphasize that the $(\lambda_A,\lambda_B)$-consistency of a partition $(A,B)$ only concerns the edges that emanate from $A$ and enter $B$. Thus the fact that a vertex-partition $(A,B)$ of a digraph $D$ is $(\lambda_A,\lambda_B)$-consistent does not necessarily imply that $(B,A)$ is also consistent with $(\lambda_B,\lambda_A)$.

Oum showed that, if $L(G)$ is the undirected line-graph of an undirected graph $G$, then $\rw(L(G)) \leq \bw(G)$~\cite{oum2008rank}. We will now show a directed analogue of this result with a weaker bound: the bi-cut-rank-width of a directed line graph of some digraph $D$ is bounded by a linear function of the directed branch-width of $D$. 

\begin{lemma}\label{lemma:dbw_to_brw}
Let $D$ be a digraph; if $\dbw(D) \leq k$, then $\bcrk(\vec{L}(D)) \leq 2(k+1)$.
\end{lemma}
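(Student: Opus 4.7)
The plan is to show that a directed branch decomposition $(T,\beta)$ of $D$ of width at most $k$ can be re-interpreted as a layout witnessing $\bcrk(\vec{L}(D)) \leq 2(k+1)$. Since $V(\vec{L}(D)) = E(D)$, the same pair $(T,\beta)$ is automatically a layout of the bi-cut-rank function on $V(\vec{L}(D))$, so it suffices to bound, for each edge $\xi$ of $T$, the quantity $\rk(M[A,B])+\rk(M[B,A])$, where $M$ is the adjacency matrix of $\vec{L}(D)$ over $\mathbb{GF}(2)$ and $(A,B) := (E(D)\setminus X,X)$ is the vertex partition of $\vec{L}(D)$ corresponding to the edge-partition of $D$ induced by $\xi$.

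The core observation is that the adjacency matrix of $\vec{L}(D)$ is controlled by the directed vertex separators of Definition \ref{def:vert-sep}. By Definition \ref{def:dir_line_gr}, for $e_1 \in A$ and $e_2 \in B$ we have $M[e_1,e_2]=1$ iff $e_1 = \dir{wv}$ and $e_2 = \dir{vy}$ share a common vertex $v$, and in that case $v$ must lie in $S_X^V$. This invites the use of the consistency framework of Definition \ref{def:lambda-consistent}: I would define $\lambda_A : A \to S_X^V \cup \{\ast\}$ by sending $\dir{wv} \mapsto v$ if $v \in S_X^V$ and to $\ast$ otherwise, and $\lambda_B : B \to S_X^V \cup \{\ast\}$ analogously using tails of edges in $B$. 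A direct check --- using the fact that a vertex $v$ which is the head of some edge in $A$ but does not lie in $S_X^V$ cannot be the tail of any edge in $B$ --- shows that $(A,B)$ is $(\lambda_A,\lambda_B)$-consistent when viewed inside $\vec{L}(D)$, and a symmetric pair of labellings valued in $S_{E(D)\setminus X}^V \cup \{\ast\}$ witnesses analogous consistency for the partition $(B,A)$.

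Consequently, any two rows of $M[A,B]$ indexed by edges sharing a common non-$\ast$ label coincide, and the row for a $\ast$-labelled edge is the zero row; hence $\rk(M[A,B]) \leq |S_X^V|$, and symmetrically $\rk(M[B,A]) \leq |S_{E(D)\setminus X}^V|$. Summing and using $|S_X^V|, |S_{E(D)\setminus X}^V| \leq |S_X^V \cup S_{E(D)\setminus X}^V| \leq k$ yields the desired bound, comfortably within $2(k+1)$. The main conceptual obstacle is the translation step: recognising that an adjacency of $\vec{L}(D)$ crossing the cut must be ``mediated'' by a vertex lying in the appropriate directed vertex separator, which fuses the combinatorial object (the separator) with the algebraic one (the rank of an off-diagonal block). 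Once this identification is pinned down, the rank bound is essentially a counting argument on the distinct row patterns appearing in each off-diagonal block of $M$, or equivalently a decomposition of that block into a sum of one rank-one outer product per separator vertex.
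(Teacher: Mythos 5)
Your proof is correct and follows essentially the same route as the paper: reinterpret the directed branch decomposition as a layout on $V(\vec{L}(D))$, label each edge of $D$ by the separator vertex that mediates its adjacencies across the cut, and bound the rank of each off-diagonal block of $M$ by the number of labels via $(\lambda_1,\lambda_2)$-consistency. Your only deviation is the observation that the $\ast$-labelled rows are zero, which tightens the paper's per-block bound of $k+1$ to $|S_X^V|$ and $|S_{E(D)\setminus X}^V|$ respectively, giving $2k$ in place of $2(k+1)$.
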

\begin{proof}
Throughout, let $M$ be the adjacency matrix of $\vec{L}(D)$ over $\mathbb{GF}(2)$ whose rows and columns are indexed by $E(D)$. Let $(T, \beta)$ be a directed branch decomposition of $D$ of width at most $k$.

Notice that since $E(D) = V(\vec{L}(D))$ and since the leaves of $T$ correspond bijectively (via $\beta$) to the elements of $E(D)$, it follows that they are also in bijective correspondence with $V(\vec{L}(D))$ (again, witnessed by $\beta$). In particular, this means that $(T, \beta)$ can be also seen as a bi-cut-rank-decomposition of $\vec{L}(D)$. Thus it suffices to show that, if $(E(D) \setminus X, X)$ is any edge partition of $D$ corresponding to an edge of $T$, then the value of $\rk(M[E(D) \setminus X, X])$ is at most $k + 1$ since then this would imply that, when viewed as a bi-cut-rank decomposition, the width of $(T, \beta)$ is at most $2(k+1)$.

Now fix a vertex partition $(E(D) \setminus X, X )$ of $\vec{L}(D)$ (recall that edge partitions of $D$ are vertex partitions of $\vec{L}(D)$) and suppose that it has directed order at most $k$ in $D$. Furthermore, let $\lambda_1 : E(D) \setminus X \rightarrow S$ and $\lambda_2 : X \rightarrow S$ be labeling functions. Notice that, if $(E(D) \setminus X, X )$ is $(\lambda_1, \lambda_2)$-consistent, then it must be that any two rows of $M[V(\vec{L}(D))\setminus X,X]$ indexed by vertices of $V(\vec{L}(D))\setminus X$ which have the same label under $\lambda_1$ are identical and hence linearly dependent. In particular this implies that if a vertex partition $(E(D) \setminus X, X )$ of $\vec{L}(D)$ is $(\lambda_1, \lambda_2)$-consistent, then $\rk(M[E(D) \setminus X, X ]) \leq |S|$ since there are at most $|S|$ possible labels. 

It therefore remains only to construct labeling functions $\lambda_1$ and $\lambda_2$ mapping the edges of $D$ to a set with at most $k + 1$ elements and show the $(\lambda_1, \lambda_2)$-consistency of $(E(D) \setminus X, X)$.

Let $\{ s_1, \ldots, s_r\}$ be the vertex separator of the partition $(E(D)\setminus X, X)$ (note that, since $(T, \beta)$ has width at most $k$, we have $r \leq k$). Define $\lambda_1 : E(D) \setminus X \rightarrow [r] \cup \{0\}$ and $\lambda_2 : X \rightarrow  [r] \cup \{0\}$ as: 
\begin{align*}
\lambda_1(\dir{xy}) &= 
            \begin{cases} 
                i     \text{ if } y = s_i \\
                0     \text{ otherwise}.
            \end{cases} \\
\lambda_2(\dir{xy}) &= 
            \begin{cases} 
                i     \text{ if } x = s_i  \\
                0     \text{ otherwise}.
            \end{cases}
\end{align*}
Note that an edge $\dir{xy}$ will have label zero if it is in $E(D) \setminus X$ and does not point towards an element of $\{ s_1, \ldots, s_r\}$, or if it is an element of $X$ and it does not emanate from an element of $\{ s_1, \ldots, s_r\}$.

We claim that $(E(D) \setminus X, X)$ is a $(\lambda_1,\lambda_2)$-consistent vertex-partition in $\vec{L}(D)$ (again recall that edge-partitions in $D$ are vertex-partitions in $\vec{L}(D)$). To see this, let $e_1$ and $e_2$ be two elements of $E(D)\setminus X$ with $\lambda_1(e_1) = \lambda_1(e_2) = i$. If $i = 0$, then neither $e_1$ nor $e_2$ points towards a vertex in $\{s_1, \ldots, s_r\}$. Thus, for all $f$ in $X$, neither $\dir{e_1f}$ nor $\dir{e_2f}$ is an edge in $\vec{L}(D)$. 
Otherwise, if $i \neq 0$, then $e_1$ and $e_2$ both point towards the vertex $s_i$. This means that, for any $f \in X$ and $j \in [2]$, there will be an edge $\dir{e_j f}$ in $\vec{L}(D)$ if and only if $f$ points away from $s_i$. Thus $\dir{e_1 f} \in E(\vec{L}(D))$ if and only if $\dir{e_2 f} \in E(\vec{L}(D))$. A symmetric argument also shows that for $f_1, f_2 \in X$ and $e \in E(D) \setminus X$, if $\lambda_2(f_1) = \lambda_2(f_2)$, then $\dir{e f_1} \in E(\vec{L}(D))$ if and only if $\dir{e f_2 } \in E(\vec{L}(D))$. The partition $(E(D) \setminus X, X)$ is therefore $(\lambda_1,\lambda_2)$-consistent and the result follows.
\end{proof}

Our next goal is to prove the converse of Lemma \ref{lemma:dbw_to_brw}: given a digraph $D$ such that $\vec{L}(D)$ has bi-cut-rank-width at most $k$, we shall deduce a bound on the directed branch-width of $D$. We obtain this result by extending techniques introduced by Gurski and Wanke in~\cite{gurski2007} to the setting of digraphs. Once again we shall be using the concept of consistency with respect to a labeling function as an intermediate step in our results. To do so, we first need an auxiliary result giving a bound on the number of labels needed in order to find labeling functions with which a vertex partition of rank $k$ is consistent.

\begin{lemma}\label{lemma:rank_to_lambda_consistent}
Let $(V(D) \setminus X, X)$ be a vertex partition of a digraph $D$ and let $M$ be the adjacency matrix of $D$ over $\mathbb{GF}(2)$. If $\rk(M[V(D) \setminus X, X]) \leq k$, then there exists a set $S$ with $|S| \leq 1 + 2^k$ and labeling functions $\lambda_1: V(D) \setminus X \to S$ and $\lambda_2: X \to S$ such that $(V(D)\setminus X, X)$ is $(\lambda_1, \lambda_2)$-consistent.
\end{lemma}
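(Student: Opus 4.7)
The plan is to label each vertex on either side of the partition by the row or column of the submatrix $N := M[V(D)\setminus X, X]$ that it indexes, and to use the rank bound to control the number of distinct labels that are needed.

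First I would note that $N$ is a matrix over $\mathbb{GF}(2)$ of rank at most $k$. Its row space, being a subspace of $\mathbb{GF}(2)^{|X|}$ of dimension at most $k$, therefore contains at most $2^k$ vectors. Hence the rows of $N$ indexed by $V(D)\setminus X$ take at most $2^k$ pairwise distinct values; by the same argument applied to $N^T$, the columns of $N$ indexed by $X$ also take at most $2^k$ pairwise distinct values.

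Next I would fix a set $S$ of cardinality $1 + 2^k$ (say $S = \{0,1,\ldots,2^k\}$) and define $\lambda_1 : V(D)\setminus X \to S$ and $\lambda_2 : X \to S$ independently. Choose an arbitrary injection from the set of distinct rows of $N$ into $S$ and let $\lambda_1(v)$ be the image of the row of $N$ indexed by $v$; symmetrically, choose an arbitrary injection from the set of distinct columns of $N$ into $S$ and let $\lambda_2(w)$ be the image of the column of $N$ indexed by $w$. Both injections exist because $|S| \geq 2^k$ is at least the number of distinct rows and the number of distinct columns.

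Finally I would verify $(\lambda_1,\lambda_2)$-consistency directly from Definition \ref{def:lambda-consistent}. If $v_1, v_2 \in V(D)\setminus X$ satisfy $\lambda_1(v_1) = \lambda_1(v_2)$, then by injectivity the $v_1$- and $v_2$-rows of $N$ coincide, so for every $w \in X$ the entries $N_{v_1,w}$ and $N_{v_2,w}$ agree, which by the definition of the adjacency matrix is exactly the statement $\dir{v_1 w} \in E(D) \iff \dir{v_2 w} \in E(D)$. The claim for $\lambda_2$ is symmetric, via equality of columns. The main point (rather than an obstacle) is to notice that Definition \ref{def:lambda-consistent} decouples the two sides of the partition: we never need to compare a $\lambda_1$-label with a $\lambda_2$-label, so $\lambda_1$ and $\lambda_2$ may reuse the same label set, yielding the bound $1 + 2^k$ instead of $2 \cdot 2^k$.
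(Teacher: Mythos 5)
Your proof is correct and takes essentially the same approach as the paper's: the paper's equivalence classes under $\sim_A$ and $\sim_B$ (equal out-neighborhoods in $X$, resp.\ equal in-neighborhoods in $V(D)\setminus X$) are exactly your classes of equal rows, resp.\ equal columns, of $M[V(D)\setminus X, X]$, and both arguments bound the number of classes by the at most $2^k$ vectors in a subspace of dimension at most $k$ over $\mathbb{GF}(2)$. The only cosmetic difference is that the paper reserves a separate label $0$ for vertices with no edge across the cut (whence its $1+2^k$), while your version shows $2^k$ labels already suffice.
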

\begin{proof}
Let $A$ be the subset of all vertices in $V(D) \setminus X$ which have an outgoing edge to an element of $X$ and let $B$ be the set of all vertices in $X$ which have an incoming edge from an element of $V(D) \setminus X$. We define two equivalence relations $\sim_A$ and $\sim_B$ on $A$ and $B$ as: 
\begin{itemize}
\item for all $x$ and $y$ in $A$, $x \sim_A y$ if $N^+(x) \cap X = N^+(y) \cap X$,
\item for all $x$ and $y$ in $B$, $x \sim_B y$ if $N^-(x) \cap V(D) \setminus X = N^-(y) \cap V(D) \setminus X$.
\end{itemize}

Let $\{a_1, \ldots , a_p\}$ and $\{b_1, \ldots , b_q\}$ sets of representatives of the equivalence classes of $\sim_A$ and $\sim_B$ in $A$ and $B$ respectively. Now we define the labeling functions $\lambda_1: V(D) \setminus X \to [p] \cup \{ 0 \}$ and $\lambda_2: X \to [q] \cup \{ 0 \}$ as 
\begin{align*}
    \lambda_1(x) &= 
    \begin{cases}
        0 \text{ if } x \not \in A \\
        i \text{ if } x \text{ is in the equivalence class represented by } a_i \text{ under } \sim_A
    \end{cases}\\
    \lambda_2(y) &= 
    \begin{cases}
        0 \text{ if } y \not \in B \\
        j \text{ if } y \text{ is in the equivalence class represented by } b_j \text{ under } \sim_B .
    \end{cases}
\end{align*}
These definitions imply that two vertices in either $V(D) \setminus X$ or $X$ have the same label under either $\lambda_1$ or $\lambda_2$ if and only if they share exactly the same out-neighbors in $X$ or in-neighbors in $V(D) \setminus X$ respectively. Thus $(V(D) \setminus X, X)$ is $(\lambda_1, \lambda_2)$-consistent.

All that remains to be shown is that $p$ and $q$ are both at most $2^k$. We will argue only for $p$ since the argument is the same (replacing rows for columns and in-neighborhoods for out-neighborhoods) for the bound on $q$. To see that $p \leq 2^k$, note that two vertices in $A$ have the same out-neighborhood in $X$ if and only if the row-vectors that they index in $M[V(D) \setminus X, X]$ are identical. Since $M[V(D) \setminus X, X]$ has rank at most $k$, there are at most $2^k$ linear combinations with binary coefficients of the basis vectors of the row-space of $M[V(D) \setminus X, X]$. This concludes the proof since $p$ is at most the number of out-neighborhoods in $X$.
\end{proof}

Now, for some digraph $D$, suppose we have a partition $(V(\vec{L}(D)) \setminus X, X)$ of rank $k$. By Lemma \ref{lemma:rank_to_lambda_consistent} we know that there exist functions $\lambda_1, \lambda_2$ mapping vertices of $V(\vec{L}(D)) \setminus X$ and $X$ to at most $\ell$ labels (for $\ell \leq 1 + 2^k$). Our next step is to bound the order of the edge-partition of $D$ corresponding to $(V(\vec{L}(D)) \setminus X, X)$ by a function of $\ell$. The following lemma is a translation of part of the proof of~\cite[Theorem 9]{gurski2007} to the setting of digraphs.

\begin{lemma}\label{lemma:edge_to_vertex_seps}
Let $D$ be a digraph without parallel edges and $(E(D) \setminus X, X)$ an edge-partition of $D$. Let $\lambda_1 \colon E(D) \setminus X \rightarrow S$ and $\lambda_2 \colon X \rightarrow S$ be functions to some $k$-element set $S$. If $(E(D) \setminus X, X)$ is $(\lambda_1, \lambda_2)$-consistent in $\vec{L}(D)$, then $(E(D) \setminus X, X)$ has directed-order at most $4k$ in $D$. 
\end{lemma}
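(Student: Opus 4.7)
The plan is to show that each vertex of the separator $S_X^V$ can be tagged with a label from $S$ in such a way that distinct separator vertices receive disjoint tag-sets. This gives $|S_X^V| \le |S| = k \le 4k$ and hence the bound on the directed order.

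First unpack what it means for $y$ to lie in $S_X^V$: by Definition~\ref{def:vert-sep} there must exist edges $e = \dir{xy} \in E(D) \setminus X$ and $f = \dir{yz} \in X$. Translated to $\vec{L}(D)$, this says exactly that $\dir{ef}$ is an arc of $\vec{L}(D)$ whose tail lies in $E(D) \setminus X$ and whose head lies in $X$; equivalently, $S_X^V$ is precisely what the arcs of $\vec{L}(D)$ crossing the partition from $E(D)\setminus X$ into $X$ witness. Crucially, the first clause of the $(\lambda_1, \lambda_2)$-consistency hypothesis constrains precisely these arcs.

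For each $y \in S_X^V$, define
\[ L(y) \;:=\; \{\, \lambda_1(e') : e' \in E(D)\setminus X,\ \text{head of }e' \text{ in }D \text{ is } y \,\}. \]
By the unpacking above, $L(y)$ is non-empty for every $y \in S_X^V$ (the edge $e$ above witnesses this). The main step is to prove that $L(y_1) \cap L(y_2) = \emptyset$ whenever $y_1 \neq y_2$ both lie in $S_X^V$. Suppose for contradiction that some label $a$ lies in both; pick $e_i = \dir{x_i y_i} \in E(D) \setminus X$ with $\lambda_1(e_i) = a$ for $i \in \{1,2\}$, and pick some $f_1 = \dir{y_1 z_1} \in X$ (which exists because $y_1 \in S_X^V$). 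Since the head of $e_1$ and the tail of $f_1$ are both $y_1$, the arc $\dir{e_1 f_1}$ lies in $E(\vec{L}(D))$. Applying the first clause of $(\lambda_1, \lambda_2)$-consistency in $\vec{L}(D)$ (with $A = E(D)\setminus X$ and $B = X$) to $e_1, e_2$ and $f_1$, we conclude that $\dir{e_2 f_1} \in E(\vec{L}(D))$. But this forces the head of $e_2$ to equal the tail of $f_1$, i.e.\ $y_2 = y_1$, contradicting distinctness.

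The family $\{L(y)\}_{y \in S_X^V}$ is therefore a collection of pairwise-disjoint non-empty subsets of $S$, giving $|S_X^V| \le |S| = k \le 4k$ as required. I expect the only nontrivial point to be the bookkeeping between $D$ and $\vec{L}(D)$: edges of $D$ are vertices of $\vec{L}(D)$, arcs of $\vec{L}(D)$ are composable pairs of arcs in $D$, and the $(\lambda_1,\lambda_2)$-consistency condition is one-directional (it constrains the arcs of $\vec{L}(D)$ from $E(D)\setminus X$ into $X$ but not the reverse). Once this correspondence is written down, a single invocation of consistency suffices, with no further casework and without needing the no-parallel-edges hypothesis.
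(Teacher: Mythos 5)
Your proof is correct, but it takes a genuinely different and in fact sharper route than the paper's. The paper argues label-by-label: for each $a \in S$ it forms an auxiliary digraph $D_a$ on the edges $\lambda_1^{-1}(a)$ together with all $\lambda_2$-classes they reach, uses consistency to show $\lambda_1^{-1}(a)$ is complete to the rest in $\vec{L}(D)$ (so the two edge-sets are pairwise incident in $D$), and then does a case analysis --- either $\lambda_1^{-1}(a)$ contains two non-incident edges, confining everything to $4$ vertices, or it forms a star, in which case the no-parallel-edges hypothesis forces the separator to be the star's centre --- yielding $4$ per label and $4k$ in total. You instead tag each separator vertex $y \in S_X^V$ with the non-empty set $L(y)$ of $\lambda_1$-labels of edges of $E(D)\setminus X$ entering $y$, and a single application of the first consistency clause shows these tag-sets are pairwise disjoint; this avoids all case analysis and improves the bound from $4k$ to $k$, which would propagate to give $2(1+2^{k})$ in Lemma~\ref{lemma:bcrk_dbw} and a correspondingly better constant in Theorem~\ref{thm:dbw_and_directed_line-graph}. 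One caveat: your closing claim that the no-parallel-edges hypothesis is dispensable is more delicate than you suggest. The hypothesis is what guarantees that $\dir{e_1 f_1}$ really is an arc of $\vec{L}(D)$ whenever the head of $e_1$ equals the tail of $f_1$: if $e_1$ and $f_1$ were anti-parallel (a configuration the paper's remark after the lemma explicitly treats as ``parallel edges''), then under the reading of Definition~\ref{def:dir_line_gr} in which $w,x,y$ must be distinct this arc is absent, your invocation of consistency says nothing, and the disjoint union of $n$ anti-parallel pairs (all forward edges in $E(D)\setminus X$, all reverse edges in $X$) becomes vacuously consistent with one label while $|S_X^V| = n$. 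So the hypothesis is doing quiet work in exactly the step you identify as bookkeeping, and should be cited there rather than waved away.
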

\begin{proof}
Let $\xi \colon S \to 2^S$ be the function mapping any label $a$ in $S$ to the set
\[\xi(a) := \{b \in S: \exists e_1 \in \lambda_1^{-1}(a) \text{ and } \exists e_2 \in \lambda_2^{-1}(b) \text{ with } \dir{e_1e_2} \in E(\vec{L}(D))\}\]
of all labels in $S$ associated by $\lambda_2$ to vertices of $\vec{L}(D)$ which are the endopints of a directed edge emanating from some vertex labeled $a$ by $\lambda_1$ (for clarity, recall that $V(\vec{L}(D)) = E(D)$).
For any label $a$ in $S$, denote by $D_a$ the subgraph of $D$ with $V(D_a) = V(D)$ and edge-set \[E(D_a) = \lambda_1^{-1}(a) \cup \bigcup_{b \in \xi(a)}\lambda_2^{-1}(b).\]
We claim that it will suffice to show that, for every $a \in S$, the edge-partition $(\lambda_1^{-1}(a), \:  E(D_a) \setminus \lambda_1^{-1}(a))$ has directed-order at most $4$ in $D_a$. To see this, recall that every element of $E(D) \setminus X$ is labeled by $\lambda_1$ and every element of $X$ is labeled by $\lambda_2$. This means that, for any element $q$ in the vertex-separator of $D$ associated with $(E(D) \setminus X, X)$, there is a label $a$ such that $q$ belongs to the separator associated with $(\lambda_1^{-1}(a), \:  E(D_a) \setminus \lambda_1^{-1}(a))$. Thus, since there are at most $k$ labels (i.e. $k$ choices for $a$), showing that $|(\lambda_1^{-1}(a), \:  E(D_a) \setminus \lambda_1^{-1}(a))| \leq 4$ for every $a \in S$ would also then imply that $|(E(D)\setminus X, X)| \leq 4k$.

Thus let $a$ be any label in $S$ and, for notational simplicity, denote by $(A, B)$ the partion $(\lambda_1^{-1}(a), \:  E(D_a) \setminus \lambda_1^{-1}(a))$. 
Recall that, since $A$ and $B$ are edge-sets in $D$, they are vertex-sets in $\vec{L}(D)$. Furthermore, since $(E(D) \setminus X, X)$ is $(\lambda_1, \lambda_2)$-consistent in $\vec{L}(D)$, it follows (by the definitions of consistency and of $\xi$) that $A$ is complete to $B$ in $\vec{L}(D)$. In particular, this implies that in $D_a$ every element of $A$ is incident with every element of $B$.

We will now show that the partition $(A,B)$ of $D_a$  has order at most $4$. If $A$ contains at most $2$ edges, then they have at most $4$ endpoints and the claim is trivially true. So suppose that $A$ has at least $3$ edges. 

Consider first the case in which $A$ has at least two non-incident edges $e = \dir{wx}$ and $f = \dir{yz}$. Since every edge in $B$ is incident with every edge in $A$, we can deduce in particular that every edge of $B$ must be incident with both an element of $\{ w,x \}$ and an element of $\{ y,z \}$. Thus, since $e$ and $f$ are not incident, no edge of $B$ has an endpoint outside of $\{w,x,y,z\}$. We may therefore conclude that $(E(A),E(B))$ has order at most $4$ in $D_a$.

Otherwise, if $A$ does not have at least two non-incident edges and since it has no parallel edges, then the elements of $A$ must form a star in $D_a$. Since every edge of $B$ is incident with every edge of $A$ and since $A$ has more than two edges, it must be that every edge of $B$ is incident with the center of the star formed by $A$. In fact, since we are assuming no parallel edges, this implies that every edge of $B$ is incident \emph{exclusively} with the center of this star. Thus $(A,B)$ has order most $1$.
\end{proof} 

We emphasize that the requirement in Lemma \ref{lemma:edge_to_vertex_seps} for $D$ to not have any parallel edges is in necessary. To see this, consider the case in which $A$ is a star with all edges pointing towards the center and $B$ is the digraph with $V(A) = V(B)$ obtained from $A$ by reversing the directions of all the edges in $A$. In this case the order of $(A,B)$ in the digraph $Q := (V(A), E(A) \cup E(B))$ (which has parallel edges) would be $|A|$ (which equals $|B|$).

We now use Lemmas \ref{lemma:rank_to_lambda_consistent} and \ref{lemma:edge_to_vertex_seps} to bound the directed branch-width of a digraph $D$ in terms of the bi-cut-rank-width of its directed line-graph.

\begin{lemma}\label{lemma:bcrk_dbw}
If $D$ is a digraph without parallel edges with $\bcrk(\vec{L}(D)) \leq k$, then $\dbw(D) \leq 8(1 + 2^k)$.
\end{lemma}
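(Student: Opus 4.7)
The plan is to take a bi-cut-rank decomposition $(T,\beta)$ of $\vec{L}(D)$ of width at most $k$ and observe that, since $V(\vec{L}(D)) = E(D)$ and $\beta$ bijects the leaves of $T$ with these elements, the very same pair $(T,\beta)$ is also a directed branch decomposition of $D$. It therefore suffices to show that, for every edge of $T$ with corresponding partition $(E(D)\setminus X, X)$, the value $f_D(X) = |S_X^V \cup S_{E(D)\setminus X}^V|$ is at most $8(1+2^k)$. The overall bound will then follow by taking the maximum over all edges of $T$.

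Fix such an edge and let $M$ be the adjacency matrix of $\vec{L}(D)$ over $\mathbb{GF}(2)$. By the definition of bi-cut-rank-width we have $\rk(M[V(\vec{L}(D))\setminus X, X]) + \rk(M[X, V(\vec{L}(D))\setminus X]) \leq k$, so each of these two ranks is individually at most $k$. Applying Lemma \ref{lemma:rank_to_lambda_consistent} to the partition $(V(\vec{L}(D))\setminus X, X)$ yields a label set $S$ with $|S| \leq 1 + 2^k$ and functions $\lambda_1,\lambda_2$ with respect to which this partition is $(\lambda_1,\lambda_2)$-consistent in $\vec{L}(D)$. Since $D$ has no parallel edges, Lemma \ref{lemma:edge_to_vertex_seps} then gives that the directed order of $(E(D)\setminus X, X)$ in $D$ is at most $4(1+2^k)$; that is, $|S_X^V| \leq 4(1+2^k)$.

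To bound the other vertex-separator $S_{E(D)\setminus X}^V$, I apply exactly the same two lemmas to the reverse partition $(X, V(\vec{L}(D))\setminus X)$, using the hypothesis $\rk(M[X, V(\vec{L}(D))\setminus X]) \leq k$. This produces fresh labeling functions with respect to which $(X, E(D)\setminus X)$ is consistent in $\vec{L}(D)$, and then Lemma \ref{lemma:edge_to_vertex_seps} (applied with the roles of the two blocks swapped) yields $|S_{E(D)\setminus X}^V| \leq 4(1+2^k)$. Adding the two bounds gives
\[
f_D(X) = |S_X^V \cup S_{E(D)\setminus X}^V| \leq |S_X^V| + |S_{E(D)\setminus X}^V| \leq 8(1+2^k),
\]
which completes the proof.

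The main conceptual point, rather than any obstacle, is noticing that bi-cut-rank-width packages two ranks (one per direction of crossing edges in $\vec{L}(D)$) whereas the directed branch-width cost function $f_D$ is the union of two vertex-separators (one per direction of crossing edges in $D$); because the two directions of $\vec{L}(D)$ correspond to the two directions of $D$ under the line-graph identification, a symmetric double application of the preceding two lemmas suffices. The no-parallel-edges assumption is used precisely to invoke Lemma \ref{lemma:edge_to_vertex_seps}, whose own proof relies on this hypothesis.
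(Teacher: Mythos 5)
Your proposal is correct and follows essentially the same route as the paper: view the bi-cut-rank decomposition of $\vec{L}(D)$ as a directed branch decomposition of $D$, then for each edge of the tree combine Lemma \ref{lemma:rank_to_lambda_consistent} with Lemma \ref{lemma:edge_to_vertex_seps} to bound the directed order by $4(1+2^k)$. The only difference is cosmetic: you spell out the symmetric application to the reversed partition $(X, E(D)\setminus X)$ explicitly, whereas the paper compresses that step into its final sentence about the order of $e$ being the cardinality of the union of the two directed separators.
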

\begin{proof}
Let $(T, \beta)$ be a minimum-width bi-cut-rank-decomposition of $\vec{L}(D)$ and let $M$ be the adjacency matrix of $\vec{L}(D)$. Since $V(\vec{L}(D)) = E(D)$, we know that $(T, \beta)$ is also a directed branch-decomposition of $D$. Now let $e$ be any edge in $T$ and let $(E(D \setminus X), X)$ be the edge-partition of $D$ associated with this edge.

Since $\bcrk(\vec{L}(D)) \leq k$, we know that $M[E(D) \setminus X, X]$ has rank at most $k$. By Lemma \ref{lemma:rank_to_lambda_consistent} we know that there exist vertex-labeling functions $\lambda_1$ and $\lambda_2$ such that every element of $V(\vec{L}(D))$ (i.e. $E(D)$) is mapped to one of at most $1 + 2^k$ labels and such that $(E(D \setminus X), X)$ is $(\lambda_1, \lambda_2)$-consistent. By Lemma \ref{lemma:edge_to_vertex_seps} the $(\lambda_1, \lambda_2)$-consistency of $(E(D \setminus X), X)$ in $\vec{L}(D)$ implies that the directed order of $(E(D \setminus X), X)$ in $D$ is at most $4(1 + 2^k)$. The result follows since the order of $e$ in $T$ is the cardinality of the union of the directed separators corresponding to $(E(D \setminus X), X)$ and $(X, E(D \setminus X))$.
\end{proof}

\noindent This concludes the proof of Theorem \ref{thm:dbw_and_directed_line-graph} since it follows immediately from Lemmas~\ref{lemma:dbw_to_brw} and~\ref{lemma:bcrk_dbw}.


\section{Properties of Directed branch-width.}\label{sec:dbw_properties}
Having introduced directed branch-width in the previous section, here we study some of its properties. In Section \ref{subsec:dbw_rel_to_bw} we will show that classes of bounded directed branch-width need not have bounded underlying undirected branch-width. Despite this result, we will obtain some relationships between undirected and directed branch-width which will be useful for our algorithmic applications in Section \ref{sec:algorithms}. In Section \ref{subsec:minors} we study butterfly minors and directed topological minors (two directed analogues of the minor relation) and we will show that directed branch-width is closed under the first but not under the second. Finally in Section \ref{subsec:dbw_comparison_to_other_measures} we shall show that classes of digraphs that have bounded width with respect to any tree-width or rank-width-inspired measure need not have bounded directed branch-width.

\subsection{Relationship to undirected branch-width.}\label{subsec:dbw_rel_to_bw}
Here we compare the directed branch-width of digraph and the branch-width of its underlying undirected graph. We will prove that there exist digraph classes of bounded directed branch-width and unbounded underlying undirected branch-width. However, despite this result, we will find two ways of relating directed and undirected branch-width. First we will show that, for any digraph $D$, the difference between the branch-width of $u(D)$ and the directed branch-width of $D$ is at most the number of sources and in sinks in $D$. Second we will show that, given any digraph $D$, we can obtain, by modifying $D$ only at sources and sinks, a digraph $H$ such that $\dbw(D) = \bw(u(H))$. All of these results will be important for our algorithmic applications in Section \ref{sec:algorithms}. 

Towards proving our results here and those in Section \ref{subsec:minors} about butterfly and directed topological minors, we observe that directed branch-width is subgraph-closed. 

\begin{lemma}\label{lemma:dbw_subgraph_closed}
If $H$ is a subgraph of a digraph $D$, then $\dbw(H) \leq \dbw(G)$.
\end{lemma}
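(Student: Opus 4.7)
The plan is to construct a directed branch decomposition of $H$ directly from an optimal one of $D$, by restricting to the leaves that correspond to edges of $H$ and then pruning the tree appropriately. If $|E(H)| \leq 1$ the claim is trivial, so assume $|E(H)| \geq 2$. Let $(T, \beta)$ be a directed branch decomposition of $D$ of width $\dbw(D)$, set $L := \beta^{-1}(E(H))$, let $T'$ be the minimal subtree of $T$ containing $L$, and obtain $T''$ from $T'$ by suppressing every internal vertex of degree two. Then $T''$ has maximum degree at most three, its leaves are precisely $L$, and $\beta'' := \beta|_L$ puts them in bijection with $E(H)$, so $(T'', \beta'')$ is a valid layout of $f_H$ on $E(H)$.

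The core of the argument is to bound, for each edge $\xi''$ of $T''$, the $f_H$-order of $\xi''$ by the $f_D$-order of some edge of $T$. By construction, $\xi''$ arises from a maximal path in $T'$ whose internal vertices have degree two in $T'$, and every edge $\xi$ of $T$ along this path induces the same leaf-partition of $L$ as $\xi''$ does in $T''$. Fix any such $\xi$ and let $(E(D)\setminus X, X)$ be the edge-partition of $D$ associated with $\xi$; then the partition $(E(H)\setminus Y, Y)$ associated with $\xi''$ satisfies $Y = X \cap E(H)$ and $E(H)\setminus Y = (E(D)\setminus X) \cap E(H)$. Since $V(H) \subseteq V(D)$ and $E(H) \subseteq E(D)$, any vertex $y \in S_Y^V$, witnessed by $\dir{xy} \in E(H)\setminus Y$ and $\dir{yz} \in Y$, also lies in $S_X^V$ via the same witnesses; hence $S_Y^V \subseteq S_X^V$. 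A symmetric argument yields $S_{E(H)\setminus Y}^V \subseteq S_{E(D)\setminus X}^V$, and therefore $f_H(Y) \leq f_D(X) \leq \dbw(D)$.

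Taking the maximum over all edges $\xi''$ of $T''$ then gives $\dbw(H) \leq \dbw(D)$. There is no real obstacle in this argument; the only care needed is in the bookkeeping around the suppression of degree-two vertices (to confirm that edges of $T''$ faithfully represent leaf-partitions of $L$ inherited from $T$) and in handling the degenerate cases $|E(H)| \leq 1$, both of which are standard.
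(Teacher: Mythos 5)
Your proposal is correct and follows essentially the same route as the paper's proof: restrict an optimal decomposition of $D$ to the leaves labelled by $E(H)$, take the minimal subtree, and observe that the bidirected separator of each restricted edge-partition is contained in that of the original partition. Your extra care with suppressing degree-two vertices and the degenerate case $|E(H)| \leq 1$ is harmless but not needed, since a subtree of a tree of maximum degree three already has maximum degree at most three.
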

\begin{proof}
Let $(T, \beta)$ be a directed branch decomposition of $D$. Let $T'$ be the inclusion-wise minimal subtree of $T$ containing the leaves in the set $\{\ell \text{ leaf in } T: \beta(\ell) \in E(H)\}$. Notice that, letting $\beta'$ be the restriction of $\beta$ to $T'$, the pair $(T',\beta')$ is a directed branch decomposition of $H$. Now take any partition $(E(D) \setminus X, X)$ of $D$ induced by an edge which is contained in both $T$ and $T'$. Since its order when restricted to the edge set of $H$ (i.e. the partition $(E(H) \setminus X, E(H) \cap X)$) is at most that of $(E(D) \setminus X, X)$, the width of $(T', \beta')$ is at most that of $(T,\beta)$.
\end{proof}

Now we introduce some more notation which will be convenient for the next results. Let $(T,\beta)$ be a directed branch decomposition of width $k$ of a digraph $D$. By the definition of $(T, \beta)$, every edge $e$ of $T$ is associated with two edge-partitions $(E(D)\setminus Y, Y)$ and $(Y, E(D)\setminus Y)$ of order at most $k$. We associate to every \emph{internal edge} $e$ of $T$ three sets: $X_e$, $X_{e,Y}$, $X_{e,E(D)\setminus Y}$. The sets $X_{e,Y}$ and $X_{e,E(D)\setminus Y}$ are the \emph{directed separators} at $e$ associated with the partitions $(E(D)\setminus Y, Y)$ and  $(Y, E(D)\setminus Y)$ respectively; we denote the set $X_{e,Y} \cup X_{e,E(D)\setminus Y}$ as $X_e$ and we call it the \emph{bidirected separator} at $e$. We point out that, using with this notation, the width of a directed branch-decomposition $(T, \beta)$ is $\max_{e \in E(T)}|X_e|$. 

In contrast to undirected graphs, in digraphs it is possible to have vertices of high degree (e.g. sources or sinks) which never appear as internal vertices in directed paths. The following result shows that such vertices never appear in bidirected separators of directed branch decompositions. 

\begin{lemma}\label{lemma:isolated_elements}
Let $e$ and $x$ be respectively an edge and a vertex in a digraph $J$. 
\begin{enumerate}
\item If $x$ never appears as an internal vertex of a directed path in $J$, then $x$ will never appear in a bidirected separator of any directed branch-decomposition of $J$.
\item If $e$ never appears in a directed path with at least two edges in $J$, then $\dbw(J - e) = \dbw(J)$.
\end{enumerate}
\end{lemma}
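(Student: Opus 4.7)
The plan is to exploit the definition of the vertex separator (Definition \ref{def:vert-sep}) directly. The key observation is that for any edge-partition $(E(J)\setminus Y, Y)$, both $S_Y^V$ and $S_{E(J)\setminus Y}^V$ collect only vertices $y$ that sit as the middle vertex of a two-edge directed walk $\dir{xy}, \dir{yz}$ in $J$, with the two edges on opposite sides of the partition. In particular, any vertex appearing in a bidirected separator $X_e = S_Y^V \cup S_{E(J)\setminus Y}^V$ associated to some edge $e$ of a directed branch-decomposition must be an internal vertex of some directed path of length two in $J$. Part (1) then follows immediately by contraposition: if $x$ is never internal to a directed path in $J$, then $x$ cannot appear in any such separator.

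For part (2), the inequality $\dbw(J-e) \leq \dbw(J)$ is immediate from Lemma \ref{lemma:dbw_subgraph_closed}. For the reverse, write $e = \dir{uv}$. The assumption that $e$ lies in no directed path of length at least two forces $N_J^-(u) = \emptyset$ and $N_J^+(v) = \emptyset$ in $J$, since otherwise $e$ would extend into a directed path of length two. In particular, by part (1) applied in $J$, both $u$ and $v$ are forbidden from appearing in any bidirected separator of any directed branch-decomposition of $J$. I would then start with a minimum-width directed branch-decomposition $(T, \beta)$ of $J - e$ (the cases $|E(J-e)| \leq 1$ being trivial) and extend it to a directed branch-decomposition $(T', \beta')$ of $J$ by picking any leaf $\ell$ of $T$, subdividing the edge incident to $\ell$ with a new internal node $n$, attaching a new leaf $\ell'$ to $n$, and setting $\beta'(\ell') = e$ with $\beta'$ agreeing with $\beta$ elsewhere.

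The bulk of the work is checking that $(T', \beta')$ has the same width as $(T, \beta)$. For every edge of $T'$ inherited from $T$, the associated partition of $E(J)$ differs from the old partition of $E(J-e)$ only by $e$ being placed on one side; since neither $u$ nor $v$ can appear in any bidirected separator in $J$, the separator cannot grow, and its order remains the same. For the two new edges incident to $n$, the corresponding partitions are $(\{e\}, E(J)\setminus\{e\})$ and $(\{e, \beta(\ell)\}, E(J)\setminus\{e, \beta(\ell)\})$; the former has empty bidirected separator by part (1) applied to $u$ and $v$, and the latter has the same order as the leaf edge of $\ell$ in $(T, \beta)$ by the same reasoning. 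The main obstacle is purely bookkeeping: one must confirm that placing $e$ on either side of any existing partition never introduces a new separator vertex, which reduces to a one-line check against the hypothesis on $u$ and $v$.
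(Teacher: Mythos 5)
Your proof is correct and follows essentially the same route as the paper, whose own proof is a one-liner asserting that (1) is immediate from the definition of the separators and that (2) follows because the endpoints of $e$ satisfy the hypothesis of (1); your write-up simply supplies the explicit leaf-attachment construction and bookkeeping that the paper leaves implicit. (The only quibble is that the hypothesis of (2) literally forces $N_J^-(u)\subseteq\{v\}$ and $N_J^+(v)\subseteq\{u\}$ rather than emptiness, but this does not affect the argument and the paper glosses over the same point.)
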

\begin{proof}
Note that (1) follows immediately from the definition of directed branch-width; furthermore (1) implies (2) since the endpoints of $e$ satisfy the conditions of (1). 
\end{proof}

This result immediately implies that there is an infinite set of digraphs given by acyclic orientations of the square grids which has bounded directed branch-width. To see this, let $\Gamma$ be the \emph{set of all square grid-graphs}. Define $\vec{\Gamma}$ to be the class of digraphs obtained by orienting the edges of every graph $G$ in $\Gamma$ as follows: take a proper two-coloring of $G$ with colors white and black and then orient every edge towards its black endpoint.

\begin{corollary}\label{corollary:grid_dbw}
Every graph in $\vec{\Gamma}$ has directed branch-width $0$.
\end{corollary}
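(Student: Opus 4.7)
The plan is to observe that the orientation procedure producing $\vec{\Gamma}$ makes every vertex either a pure source or a pure sink, and then to invoke Lemma \ref{lemma:isolated_elements}(1) to force every bidirected separator in every directed branch decomposition to be empty.

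First I would unpack the construction. Given $G \in \Gamma$ together with a proper $2$-coloring $c : V(G) \to \{\text{white}, \text{black}\}$, every edge $xy \in E(G)$ has $c(x) \neq c(y)$; the orientation then directs $xy$ from its white endpoint to its black endpoint. Consequently, in the resulting digraph $D \in \vec{\Gamma}$, every white vertex has only outgoing edges (it is a source) and every black vertex has only incoming edges (it is a sink). In particular, no vertex has both an in-neighbor and an out-neighbor, so there is no directed path of length two in $D$, which means no vertex of $D$ ever occurs as an internal vertex of a directed path.

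Next I would apply Lemma \ref{lemma:isolated_elements}(1): since no vertex of $D$ is an internal vertex of any directed path, no vertex of $D$ can appear in a bidirected separator $X_e$ of any directed branch decomposition $(T, \beta)$ of $D$. Thus for every internal edge $e$ of $T$ we have $X_e = \emptyset$, so $(T, \beta)$ has width $0$ and therefore $\dbw(D) = 0$.

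The only mild subtlety is ensuring a directed branch decomposition actually exists; this is trivial since every grid has at least two edges (and for the degenerate cases with fewer edges, directed branch-width is $0$ by convention, matching the definition of the branch-width of graphs with fewer than two edges). There is no real obstacle here — the statement is essentially a direct corollary of Lemma \ref{lemma:isolated_elements}, the key observation being purely structural: the sink/source alternation imposed by the proper $2$-coloring eradicates all directed paths of length at least two, trivializing the vertex separators that directed branch-width measures.
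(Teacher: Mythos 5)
Your proposal is correct and follows exactly the paper's argument: observe that the $2$-coloring orientation makes every vertex a source or a sink, then invoke Lemma \ref{lemma:isolated_elements}(1) to conclude that every bidirected separator of any directed branch decomposition is empty, giving width $0$. The extra detail you supply (no directed path of length two exists, existence of a decomposition) only makes explicit what the paper leaves implicit.
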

\begin{proof}
Every vertex of any element $D$ of $\vec{\Gamma}$ is either a source or a sink. It therefore follows that every bidirected separator of any directed branch-decomposition of $D$ is empty (by Lemma \ref{lemma:isolated_elements}) and hence that $\dbw(D) = 0$. 
\end{proof}

Now we show that the boundedness of directed branch-width does not imply boundedness of underlying undirected branch-width. 

\begin{theorem}\label{thm:dbw_not_tw_bounding}
There does not exist any function $f: \mathbb{N} \to \mathbb{N}$ such that, for every digraph $D$, $\bw(u(D)) \leq f(\dbw(D))$.
\end{theorem}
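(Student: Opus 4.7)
The strategy will be to exhibit a single infinite family of digraphs on which directed branch-width stays uniformly bounded (in fact, equal to zero) while the underlying undirected branch-width grows without bound. The class $\vec{\Gamma}$ of acyclically oriented square grids introduced just before Corollary \ref{corollary:grid_dbw} is tailor-made for this: the proper $2$-coloring used to orient the edges ensures that every vertex of every $D \in \vec{\Gamma}$ is either a source or a sink. Hence Corollary \ref{corollary:grid_dbw} directly gives $\dbw(D) = 0$ for all $D \in \vec{\Gamma}$, so this entire infinite class witnesses $\dbw = 0$.

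I would then argue that the corresponding class $\{u(D) : D \in \vec{\Gamma}\}$ of underlying undirected graphs --- which is by construction exactly the class of all square grid graphs --- has unbounded branch-width. The standard way to see this is to invoke the classical fact that the $(n \times n)$-grid has tree-width $n$ (from the graph minors programme) and then apply Theorem \ref{thm:bounded_tw_bounded_bw} to convert this lower bound on tree-width into a lower bound on branch-width that tends to infinity with $n$.

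The conclusion is then immediate by contradiction: if a function $f : \mathbb{N} \to \mathbb{N}$ as in the statement existed, then restricting it to $D \in \vec{\Gamma}$ would yield $\bw(u(D)) \leq f(\dbw(D)) = f(0)$ for every $D \in \vec{\Gamma}$, bounding the branch-width of the whole class of square grid graphs by the single constant $f(0)$ and contradicting the previous paragraph.

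There is no real obstacle here: the argument is essentially a one-line consequence of Corollary \ref{corollary:grid_dbw} combined with the well-known unboundedness of the tree-width (hence branch-width, by Theorem \ref{thm:bounded_tw_bounded_bw}) of square grids. The only mild care required is in not claiming a quantitative bound --- the argument rules out \emph{any} computable $f$, not merely a polynomial or linear one, precisely because the separation happens already at the value $\dbw = 0$.
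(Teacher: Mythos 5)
Your proposal is correct and follows exactly the paper's own argument: both use the class $\vec{\Gamma}$ of acyclically oriented grids from Corollary \ref{corollary:grid_dbw} as the witness with directed branch-width zero, and both derive the unboundedness of $\bw(u(D))$ from the tree-width $n$ of the $(n \times n)$-grid via Theorem \ref{thm:bounded_tw_bounded_bw}. No gaps; your closing remark that the separation occurs already at $\dbw = 0$ is a fair observation, though the paper does not dwell on it.
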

\begin{proof}
By Corollary \ref{corollary:grid_dbw}, there is an infinite set $\vec{\Gamma}$ of acyclic orientations of the square grids which has directed branch-width zero. However, the $(n\times n)$-grid has tree-width $n$~\cite{Diestel2010GraphTheory} and hence it has branch-width at least $2n/3$ (by Theorem \ref{thm:bounded_tw_bounded_bw}).
\end{proof}

In a similar vein to Lemma \ref{lemma:isolated_elements}, we can show show that directed branch-width is invariant under a form of identifications of sources or sinks which we define now.

\begin{definition}\label{def:terminating_minor}
Two vertices $x$ and $y$ of a digraph $D$ are \emph{source/sink-identifiable} if they are either both sources or both sinks. We say that a digraph $H$ is \emph{equivalent to  $D$ under source-sink identification} if it can be obtained from $D$ via a sequence of identifications of pairs of source/sink-identifiable vertices. We say that a graph $D'$ is a \emph{source-sink-split} of a digraph $D$ if every source and every sink in $D'$ has degree exactly $1$ and if $D$ is equivalent to $D'$ under source-sink identification.
\end{definition}

The next result shows that directed branch-width is invariant under source-sink identifications. Note that, for undirected graphs, where the natural analogue of a source or a sink is a pendant vertex, this does not hold. In fact, the branch-width of an undirected graph can be increased arbitrarily by repeated identification of pendant vertices.

\begin{lemma}\label{lemma:dbw_invariant_source_sink_identification}
Let $H$ and $D$ be two digraphs. If $H$ is equivalent to $D$ under source-sink identification, then $\dbw(H) = \dbw(D)$. 
\end{lemma}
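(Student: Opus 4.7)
My plan is to reduce to a single identification step and show that the directed branch-width is preserved, leveraging Lemma \ref{lemma:isolated_elements} to argue that nothing of relevance to bidirected separators is affected by the identification.

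Since source-sink equivalence is defined as a \emph{sequence} of identifications of source/sink-identifiable pairs, it suffices by induction on the length of the sequence to prove $\dbw(D) = \dbw(H)$ under the assumption that $H$ is obtained from $D$ by a single identification of two source/sink-identifiable vertices $x$ and $y$. I will treat the case where $x,y$ are both sources; the sink case is entirely symmetric (either by repeating the argument with all directions reversed, or by observing that the argument below only uses the fact that neither of the two identified vertices appears as an internal vertex of any directed path).

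Let $z$ denote the vertex of $H$ obtained by identifying $x$ and $y$. The key observation is that $z$ is a source in $H$: since both $x$ and $y$ have empty in-neighborhood in $D$, the identified vertex inherits an empty in-neighborhood in $H$. Consequently, by Lemma \ref{lemma:isolated_elements}(1), none of $x$, $y$, or $z$ can appear in a bidirected separator of any directed branch decomposition of the respective digraph. I would then set up the natural bijection $\phi: E(D)\to E(H)$ that sends every edge incident with $x$ or $y$ to the corresponding edge incident with $z$ and fixes every other edge (this bijection is well-defined even in the presence of multi-edges that may arise if $x$ and $y$ share an out-neighbour, because the bijection is defined on individual edges rather than on pairs of endpoints).

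Given a directed branch decomposition $(T,\beta)$ of $D$, I would consider the candidate decomposition $(T,\phi\circ\beta)$ of $H$ and compare, for every internal edge $e$ of $T$, the bidirected separator $X_e^D$ associated with $(T,\beta)$ in $D$ with the bidirected separator $X_e^H$ associated with $(T,\phi\circ\beta)$ in $H$. For any vertex $v\notin\{x,y,z\}$, membership in $X_e^D$ or $X_e^H$ depends only on the incoming and outgoing edges at $v$, and these are in obvious correspondence under $\phi$ because $v$'s incidences are unaffected by the identification; so such a $v$ lies in $X_e^D$ if and only if it lies in $X_e^H$. For the remaining vertices, Lemma \ref{lemma:isolated_elements}(1) rules out $x,y$ from $X_e^D$ and $z$ from $X_e^H$. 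Hence $|X_e^D| = |X_e^H|$ for every $e$, and the two layouts have exactly the same width, giving $\dbw(H)\le\dbw(D)$. Applying the same argument starting from a decomposition of $H$ and using $\phi^{-1}$ yields $\dbw(D)\le\dbw(H)$, completing the proof.

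The only subtle point — and the step I would be most careful about — is the well-definedness of $\phi$ in the possible presence of parallel edges created by the identification; once edges are treated as distinguishable objects (as they implicitly are throughout the paper, given that the bijection $\beta$ of a branch decomposition is defined on $E(D)$), the separator-preservation argument is routine and relies entirely on Lemma \ref{lemma:isolated_elements}(1).
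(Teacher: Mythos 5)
Your proof is correct and follows essentially the same route as the paper's: reduce to a single identification step, invoke Lemma \ref{lemma:isolated_elements}(1) to exclude $x$, $y$ and the identified vertex from every bidirected separator, and transport decompositions across the induced correspondence on edge sets. The one place the paper handles things differently is the parallel-edge subtlety you flag: rather than treating the merged edges $\dir{xv}$ and $\dir{yv}$ as distinguishable multi-edges of $H$, the paper lets the induced map $\omega\circ\beta$ be merely surjective and deletes duplicate leaves of $T$ to restore injectivity, which keeps the identified digraph simple and the layout a genuine bijection.
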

\begin{proof}
Let $x$ and $y$ be two source/sink-identifiable vertices in $D$ and note that it is sufficient to consider the case in which $H$ is obtained from $D$ by identifying $x$ and $y$ into a vertex $\gamma$.

Let $(T,\beta)$ be a directed branch decomposition of $D$ and define the map $\omega: E(D) \to E(H)$ as 
\[\omega(uv) = \begin{cases}
                \dir{\gamma v} \text{ if }  u \in \{ x , y \} \\
                \dir{u \gamma} \text{ if }  v \in \{ x , y \} \\
                uv \text{ otherwise}
              \end{cases}\]
Clearly the map $\xi := \omega \circ \beta$ from the leaves of $T$ to $E(H)$ is surjective; however, it would fail to be injective if $x$ and $y$ have a common neighbor. In this case, we can simply remove some duplicate leaves (i.e. leaves mapped by $\xi$ to the same edge) in $T$ so that we can ensure $\xi$ is injective. We shall assume that we have done so to $(T, \xi)$ if required.

Since $x$ and $y$ are $st$-identifiable, they never appear as internal vertices of a directed path. Thus, by Lemma \ref{lemma:isolated_elements},  it follows that neither $x$ nor $y$ will ever appear in any bidirected separator of $(T,\beta)$. Furthermore, we know that $\gamma$ must also be either a source or a sink since identifying two sources or two sinks yields respectively a source or a sink. Thus $\gamma$ will also never appear in any bidirected separator of $(T,\xi)$ (again by Lemma \ref{lemma:isolated_elements}). In particular, letting $X$ be any edge-subset of $D$, this implies that the order of $(E(D)\setminus X, X)$ in $G$ is the same as that of $(E(H) \setminus \omega(X), \omega(X))$ in $H$. By the definition of $\xi$ this implies that the width of $(T, \beta)$ is the same as that of $(T, \xi)$.
\end{proof}

The following result characterizes bi-directed vertex-separators in a digraph $D$ in terms of vertex separators in $u(D)$ and the set of sources and sinks in $D$.

\begin{lemma}\label{lemma:dir_separtor_is_undir_separator_without_sources_or_sinks}
Let $(E(D) \setminus X, X)$ be an edge-partition of $D$ and let $S_1$ and $S_2$ be the directed separators associated with $(E(D) \setminus X, X)$ and $(X, E(D) \setminus X)$ respectively. Let $U$ be the set of all vertices incident both with an edge in $E(u(D)) \setminus X$ and with an edge in $X$ in the underlying undirected graph $u(D)$. If $S$ is the set of all sources and all sinks in $D$, then $S_1 \cup S_2 = U \setminus S$.
\end{lemma}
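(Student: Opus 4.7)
The plan is to prove set equality by establishing the two inclusions $S_1 \cup S_2 \subseteq U \setminus S$ and $U \setminus S \subseteq S_1 \cup S_2$, unwinding each from the definitions.

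For the forward inclusion, I would take $y \in S_1 \cup S_2$ and argue symmetrically in the two cases. Say $y \in S_1$, so there are vertices $x,z$ with $\dir{xy} \in E(D)\setminus X$ and $\dir{yz} \in X$. In $u(D)$ the underlying edges $xy$ and $yz$ witness that $y$ is incident with an edge in $E(u(D))\setminus X$ and an edge in $X$, giving $y \in U$. Moreover $y$ has both an in-edge ($\dir{xy}$) and an out-edge ($\dir{yz}$) in $D$, so $y$ is neither a source nor a sink, i.e. $y \notin S$. The argument for $S_2$ is identical with the roles of $X$ and $E(D)\setminus X$ swapped.

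For the reverse inclusion, suppose $y \in U \setminus S$. Since $y \notin S$, the vertex $y$ has at least one in-edge and at least one out-edge in $D$. Since $y \in U$, at least one edge incident with $y$ lies in $X$ and at least one lies in $E(D)\setminus X$. I claim that either there is an in-edge in $E(D)\setminus X$ together with an out-edge in $X$ (which places $y$ in $S_1$), or an in-edge in $X$ together with an out-edge in $E(D)\setminus X$ (which places $y$ in $S_2$). Suppose for contradiction that neither configuration occurs. If some in-edge of $y$ is in $E(D)\setminus X$, then no out-edge of $y$ can lie in $X$, so every out-edge lies in $E(D)\setminus X$; but then no in-edge of $y$ can lie in $X$ either (else combined with some out-edge in $E(D)\setminus X$ it would yield the second configuration), so every edge at $y$ lies in $E(D)\setminus X$, contradicting $y \in U$. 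The symmetric argument, starting from an in-edge in $X$, forces every edge at $y$ into $X$, again contradicting $y \in U$.

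The proof is essentially a case analysis and should be routine; the only point requiring any thought is the reverse inclusion, where one must genuinely use both hypotheses $y \in U$ (to get edges in both partition classes at $y$) and $y \notin S$ (to guarantee that $y$ has both in-edges and out-edges in $D$). Without the latter, for example, a source incident to edges from both partition classes would witness $U \setminus S \not\subseteq S_1 \cup S_2$, so it is precisely the removal of sources and sinks via $S$ that makes the identification with the underlying undirected separator work.
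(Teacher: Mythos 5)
Your proof is correct and takes essentially the same approach as the paper: both directions are unwound directly from the definitions of the directed separators, with the reverse inclusion reducing to the observation that a vertex of $U$ avoiding both $S_1$ and $S_2$ must have all its incident edges pointing the same way relative to the partition unless it is a source or sink. Your case analysis for the reverse inclusion is in fact spelled out slightly more carefully than the paper's (which argues the contrapositive, taking $y \in U \setminus (S_1 \cup S_2)$ and concluding $y \in S$ rather tersely), but the underlying argument is the same.
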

\begin{proof}
Consider any element $x$ in $S_1 \cup S_2$. By the definition of $S_1$ and $S_2$, there must be two edges $e \in E(D) \setminus X$ and $f \in X$ such that either $e = \dir{wx}$ and $f = \dir{xy}$ or such that $e = \dir{xw}$ and $f = \dir{yx}$. Either way, this implies that $x$ is neither a source nor a sink and that $x \in U$ (since $x$ is incident with both $e$ and $f$ in $u(D)$). Thus we have shown that $S_1 \cup S_2 \subseteq U \setminus S$.

Now consider any element $y \in U \setminus (S_1 \cup S_2)$. Since $y$ is in $U$, we know that there is at least one pair of edges $(g,h) \in (E(D) \setminus X) \times X$ incident with $y$. However, since $y$ is not in $S_1 \cup S_2$, both $g$ and $h$ either point away from $y$ or towards $y$. This means that $y$ is an element of $S$ (i.e. it is a source or a sink). Thus $S_1 \cup S_2 = U \setminus S$.
\end{proof}

Via Lemma \ref{lemma:dir_separtor_is_undir_separator_without_sources_or_sinks} we now find a relationship between directed branch-width of a digraph $D$ and the undirected branch-width of its underlying undirected graph which depends on the number of sources and sinks of $D$. 

\begin{corollary}\label{corollary:dbw_bw_number_of_sources_and_sinks}
Let $D$ be a digraph; if $S$ is the set of all sources and sinks in $D$, then $\bw(u(D)) - |S| \leq \dbw(D) \leq \bw(u(D)). $
\end{corollary}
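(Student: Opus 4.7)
The plan is to prove each inequality by reusing the same tree decomposition as both an undirected branch decomposition of $u(D)$ and a directed branch decomposition of $D$, and then invoking Lemma \ref{lemma:dir_separtor_is_undir_separator_without_sources_or_sinks} edge-by-edge in the decomposition tree. The key observation is that by construction the edge set of $u(D)$ is in natural bijection with $E(D)$, so any decomposition tree $(T,\beta)$ that serves as a branch decomposition of one of $\{u(D),D\}$ serves, under the same $\beta$, as a branch decomposition of the other.

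\textbf{Upper bound.} First I would take a minimum-width branch decomposition $(T,\tau)$ of $u(D)$, so that the maximum order of an edge of $T$ is $\bw(u(D))$, and reinterpret $(T,\tau)$ as a directed branch decomposition of $D$ via the identification $E(u(D))=E(D)$. Fix an internal edge $e$ of $T$ inducing the edge partition $(E(D)\setminus X,X)$, let $U$ denote the corresponding undirected separator in $u(D)$ (so $|U|\leq\bw(u(D))$), and let $S$ be the set of sources and sinks of $D$. Lemma \ref{lemma:dir_separtor_is_undir_separator_without_sources_or_sinks} gives $X_e=U\setminus S$, so $|X_e|\leq |U|\leq \bw(u(D))$. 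Taking the maximum over $e$ yields $\dbw(D)\leq\bw(u(D))$.

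\textbf{Lower bound.} For the other direction I would take a minimum-width directed branch decomposition $(T,\beta)$ of $D$, so that $\max_{e\in E(T)}|X_e|=\dbw(D)$, and again reinterpret it as a branch decomposition of $u(D)$ under $E(D)=E(u(D))$. For any internal edge $e$ of $T$ with associated edge partition $(E(D)\setminus X,X)$, Lemma \ref{lemma:dir_separtor_is_undir_separator_without_sources_or_sinks} gives $U = X_e \cup (U\cap S)$, hence
\[|U|\;\leq\;|X_e|+|S|\;\leq\;\dbw(D)+|S|.\]
Since the branch-width of $u(D)$ is at most the width of any one of its branch decompositions, we obtain $\bw(u(D))\leq\dbw(D)+|S|$, i.e.\ $\bw(u(D))-|S|\leq\dbw(D)$.

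There is essentially no obstacle here beyond verifying that a single tree-and-bijection pair really does serve both roles: the only subtlety is to make sure the edge-bijection is the identity under the natural identification $E(u(D))=E(D)$, so that the two notions of ``partition induced by an edge of $T$'' genuinely coincide. Once that is established, Lemma \ref{lemma:dir_separtor_is_undir_separator_without_sources_or_sinks} immediately reduces the comparison of the two widths to the elementary inclusion $X_e=U\setminus S$, and both inequalities fall out.
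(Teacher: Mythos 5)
Your proposal is correct and follows essentially the same route as the paper: both reinterpret a branch decomposition of one of $\{D, u(D)\}$ as a decomposition of the other via the natural bijection $E(D) \cong E(u(D))$, and then apply Lemma \ref{lemma:dir_separtor_is_undir_separator_without_sources_or_sinks} edge-by-edge to compare the orders. The paper merely phrases the two inequalities as a single comparison $\ell - |S| \leq k \leq \ell$ over one bijective correspondence of decompositions, whereas you split it into two directions; the substance is identical.
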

\begin{proof}
Consider a directed branch-decomposition $(T, \beta)$ of $D$. Letting $\phi: E(D) \to E(u(D))$ be the bijection mapping every directed edge to its undirected counterpart, note that $(T, \phi \circ \beta)$ is an undirected branch decomposition of $u(D)$. Furthermore, observe that, in this way, every undirected branch decomposition can be obtained from some directed branch decomposition and vice versa. 

Let $\varepsilon$ be any edge in $T$ and suppose that it has order $k$ in $(T, \beta)$ and order $\ell$ in $(T, \phi \circ \beta)$. By Lemma \ref{lemma:dir_separtor_is_undir_separator_without_sources_or_sinks}, we know that $\ell - |S| \leq k \leq \ell$. Furthermore, this is true for any choice of $\varepsilon$. Thus, since $\phi$ establishes a bijective correspondence between the set of all directed branch decompositions of $D$ and that of all undirected branch decompositions of $u(D)$, it follows that $\bw(u(D)) - |S| \leq \dbw(D) \leq \bw(u(D))$. 
\end{proof}

Note that, since any bidirected orientation $D$ of an undirected graph $G$ has no sources or sinks, we can apply Corollary \ref{corollary:dbw_bw_number_of_sources_and_sinks} to deduce the following result. 
\begin{corollary}\label{corollary:dbw_bidirected_orientations}
If $D$ is the digraph obtained by replacing every edge $xy$ in an undirected graph $G$ with the edges $\dir{xy}$ and $\dir{yx}$, then $\dbw(D) = \bw(G)$.
\end{corollary}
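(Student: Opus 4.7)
My plan is to apply Corollary \ref{corollary:dbw_bw_number_of_sources_and_sinks} directly, after verifying that the set of sources and sinks in $D$ is (effectively) empty. First I would observe that the bidirected orientation $D$ has no sources and no sinks: if $v \in V(G)$ is incident to any edge $vu \in E(G)$, then the arcs $\dir{vu}$ and $\dir{uv}$ both lie in $E(D)$, so $v$ has at least one out-neighbour and at least one in-neighbour in $D$. Isolated vertices of $G$ remain isolated in $D$ and, by Lemma \ref{lemma:isolated_elements}(1), never appear in any bidirected separator of any directed branch decomposition of $D$; since they also do not contribute to $\bw(G)$, they can be safely ignored. So the set $S$ of sources and sinks of $D$ can be taken to be empty.

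Once this is established, the rest is immediate. With $|S| = 0$, Corollary \ref{corollary:dbw_bw_number_of_sources_and_sinks} collapses to $\bw(u(D)) \leq \dbw(D) \leq \bw(u(D))$, i.e.\ $\dbw(D) = \bw(u(D))$. It remains only to identify $u(D)$ with $G$: each antiparallel pair $\{\dir{xy}, \dir{yx}\}$ of $D$ corresponds to the single undirected edge $xy$ of $G$, so $u(D)$ and $G$ coincide. Combining these, $\dbw(D) = \bw(G)$.

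The main obstacle is essentially hygienic rather than substantive, since the claim is a direct specialization of the previously proved two-sided bound. The one point that requires care is matching the convention for the underlying undirected graph $u(D)$ so that each antiparallel pair of arcs in $D$ is genuinely identified with a single edge of $G$; once this is set, the proof is a one-line application of Corollary \ref{corollary:dbw_bw_number_of_sources_and_sinks} together with the trivial check that bidirected orientations admit no source or sink vertices.
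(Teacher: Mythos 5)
Your proposal is correct and follows essentially the same route as the paper, which likewise observes that a bidirected orientation has no sources or sinks and then applies Corollary \ref{corollary:dbw_bw_number_of_sources_and_sinks} with $S = \emptyset$ to collapse the two-sided bound to an equality. Your extra remark about isolated vertices is a harmless refinement that the paper leaves implicit.
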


We will now show that, given any digraph $D$, we can obtain, by modifying $D$ only at sources and sinks, a digraph $D'$ such that the directed branch-width of $D$ equals the underlying undirected branch-width of $D'$. 

\begin{lemma}\label{lemma:dbw_undir_lower_bound}
If $H$ is the source-sink-split of $D$, then $\bw(u(H)) = \dbw(D)$.
\end{lemma}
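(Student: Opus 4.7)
My plan is to reduce the claim to showing $\dbw(H) = \bw(u(H))$, which combined with the source-sink-identification invariance will yield the result. First, since $H$ is equivalent to $D$ under source-sink identification, Lemma \ref{lemma:dbw_invariant_source_sink_identification} gives $\dbw(H) = \dbw(D)$. Thus it suffices to prove $\bw(u(H)) = \dbw(H)$.

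The key observation is that, by the definition of source-sink-split, every source and every sink of $H$ has degree exactly $1$ in $H$. Consequently, for any edge-partition $(E(H) \setminus X, X)$ of $H$, no source or sink of $H$ can belong to the corresponding undirected separator $U$ (the set of vertices incident both with an edge in $E(u(H)) \setminus X$ and with an edge in $X$), because a vertex of degree $1$ in $u(H)$ is incident with only a single edge, which must lie entirely on one side of the partition.

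With this in hand, I would apply Lemma \ref{lemma:dir_separtor_is_undir_separator_without_sources_or_sinks} to $H$: letting $S$ denote the set of all sources and sinks of $H$ and letting $S_1$, $S_2$ be the directed separators associated with $(E(H) \setminus X, X)$ and $(X, E(H) \setminus X)$, we obtain $S_1 \cup S_2 = U \setminus S$. But by the previous paragraph $U \cap S = \emptyset$, so in fact $|S_1 \cup S_2| = |U|$ for every edge-partition of $H$.

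To finish, I would use the bijection $\phi : E(H) \to E(u(H))$ sending each directed edge to its undirected counterpart (as in the proof of Corollary \ref{corollary:dbw_bw_number_of_sources_and_sinks}) to establish a width-preserving correspondence between directed branch-decompositions of $H$ and undirected branch-decompositions of $u(H)$: if $(T, \beta)$ is a directed branch-decomposition of $H$, then $(T, \phi \circ \beta)$ is an undirected branch-decomposition of $u(H)$, and at every internal edge of $T$ the directed order (which equals $|S_1 \cup S_2|$) coincides with the undirected order (which equals $|U|$) by the equality just established. Hence $\dbw(H) = \bw(u(H))$, and combined with $\dbw(H) = \dbw(D)$ we conclude $\bw(u(H)) = \dbw(D)$. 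I do not anticipate any serious obstacle: all the ingredients, including the degree-$1$ property of sources/sinks in $H$ and the separator identity, have been set up by preceding lemmas, so the argument is essentially a combination of them.
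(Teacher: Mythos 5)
Your proof is correct and follows essentially the same route as the paper's: both reduce to $H$ via Lemma \ref{lemma:dbw_invariant_source_sink_identification}, then use Lemma \ref{lemma:dir_separtor_is_undir_separator_without_sources_or_sinks} together with the degree-$1$ property of sources and sinks in the source-sink-split to conclude that directed and undirected separators coincide for every edge-partition of $H$. Your write-up is in fact somewhat more explicit than the paper's about why degree-$1$ vertices cannot lie in the undirected separator.
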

\begin{proof}
By Lemma \ref{lemma:dir_separtor_is_undir_separator_without_sources_or_sinks} we know that the vertex separator in $u(H)$ consists of the elements of the vertex separator it corresponds to in $H$ as well as possibly some sources and/or sinks. By the definition of $H$, every source or sink in $H$ has degree $1$. Thus, for any vertex $x \in V(H)$ (recall $V(H) = V(u(H))$), $x$ will appear as an internal vertex in a directed path in $H$ if and only if it appears as an internal vertex in a path in $u(H)$. But then sources and sinks of $H$ (which, by the definition of $H$, have degree $1$) will never appear in a vertex-separator in $u(H)$ (by the definition of undirected branch-width) and hence $\bw(u(H)) = \dbw(H)$. The result follows since, by Lemma 
\ref{lemma:dbw_invariant_source_sink_identification}, $\dbw(H) = \dbw(D)$.
\end{proof}

\subsection{Butterfly minors and directed topological minors.}\label{subsec:minors} 
Two directed analogues of the minor relation are \emph{butterfly minors} and \emph{directed topological minors}. The directed topological minor relation (introduced in~\cite{ganian2010arethere}) is a less-restrictive variant of the better-known notion of a butterfly minor which was introduced in~\cite{dtw}. We show that although directed branch-width is not closed under topological minors, it is closed under butterfly minors. 

\begin{definition}
The \emph{digraph $D/\dir{xy}$ obtained by contracting an edge $\dir{xy}$ of a digraph $D$} is the digraph obtained from $D$ by removing the edge $\dir{xy}$ and the vertices $x$ and $y$ and replacing these with a new vertex $v_{\dir{xy}}$ which has in-neighborhood and out-neighborhood equal to $N_D^-(x) \cup N_D^-(y)$ and $N_D^+(x) \cup N_D^+(y)$ respectively in $D/\dir{xy}$.
\end{definition}

Both directed topological minors and butterfly minors are defined by taking subgraphs and contracting edges; what distinguishes them is which edges are deemed `contractible'. Directed topological minors allow only the contraction of \emph{$2$-contractible edges} (defined below).

\begin{definition}[\cite{ganian2010arethere}]\label{def:2-contractible} 
Let $D$ be a digraph and let $V_3(D)$ denote the subset of vertices incident with at least $3$ edges in $D$. An edge $a = \dir{xy}$ is \emph{$2$-contractible} in $D$ if 
\begin{itemize}
    \item $\{x,y\} \not \subseteq V_3(D)$
    \item $\dir{yx} \in E(D)$ or there does \textbf{not} exist a pair of vertices $(w,z)$ in $V_3(D)$, possibly $w=z$, such that $x$ can reach $w$ in $D-\dir{xy}$ and $z$ can reach $y$ in $D-\dir{xy}$.
\end{itemize}
\end{definition}

Butterfly minors, on the other hand, only allow the contraction of \emph{butterfly edges}.

\begin{definition}
Let $\dir{xy}$ be an edge in a digraph $D$. If $x$ has out-degree $1$ and $y$ has in-degree $1$, then we call $\dir{xy}$ a \emph{butterfly edge}.
\end{definition}

Note that the contraction of a butterfly edge never creates new directed paths that were not otherwise present. On the other hand, this might happen when contracting a $2$-contractible edge (for example one joining a source to a sink).

Having defined these two kinds of edges, we can define directed topological minors and butterfly minors. 

\begin{definition}[\cite{ganian2010arethere, dtw}]\label{def:dir_topo_minor}
Let $H$ and $D$ be digraphs. 
\begin{itemize}
\item $H$ is a \emph{directed topological minor} of $D$ if $H$ can be obtained from a subgraph of $D$ via a sequence of contractions of \emph{$2$-contractible} edges.~\cite{ganian2010arethere}
\item $H$ is a \emph{butterfly-minor} of $D$ if it can be obtained from a subgraph of $D$ via sequence of contractions of \emph{butterfly edges}~\cite{dtw}. 
\end{itemize}
\end{definition}

The next theorem shows directed branch-width is not closed under directed topological minors. In contrast, we point out that all the tree-width-inspired measures are indeed closed under directed topological minors~\cite{ganian2010arethere}. 

\begin{theorem}\label{thm:dbw_not_topo_minor_closed}
There does not exists any function $g: \mathbb{N} \rightarrow \mathbb{N}$ such that, for every directed topological minor $H$ of a digraph $J$, we have $\dbw(H) \leq g(\dbw(J))$.
\end{theorem}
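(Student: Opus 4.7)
The plan is to refute the existence of such a $g$ by exhibiting, for each $n$, a pair $(J_n, H_n)$ with $\dbw(J_n) = 0$, $H_n$ a directed topological minor of $J_n$, and $\dbw(H_n) \to \infty$. For $J_n$ I would take the $(n\times n)$-grid $G_n$ with every edge subdivided once, oriented so that for each edge $\{u, v\}$ of $G_n$ and its corresponding subdivision vertex $b_{uv}$ both arcs $\dir{u b_{uv}}$ and $\dir{v b_{uv}}$ are present. Every original grid vertex is then a source and every subdivision vertex a sink, so by Lemma \ref{lemma:isolated_elements} no vertex of $J_n$ ever appears in a bidirected separator of any directed branch-decomposition, giving $\dbw(J_n) = 0$ exactly as in the proof of Corollary \ref{corollary:grid_dbw}.

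For $H_n$ I would take the acyclic orientation of $G_n$ in which every horizontal arc points rightwards and every vertical arc points downwards. To realize $H_n$ as a directed topological minor of $J_n$, I would process the edges $\{u, v\}$ of $G_n$ one at a time, with $v$ denoting the right or bottom endpoint, and contract at each step the arc from $v$ (or its already-merged incarnation) to $b_{uv}$. Each such contraction is $2$-contractible: the subdivision vertex $b_{uv}$ is never touched by any other contraction, so it retains degree $2$ until the instant it is absorbed, hence lies outside $V_3$ of the current digraph, and the first bullet of Definition \ref{def:2-contractible} applies. After the contraction, the surviving arc incident to the merged vertex at $b_{uv}$ becomes $\dir{u v^*}$, which represents the intended arc $\dir{uv}$; iterating over all edges of $G_n$ produces exactly the claimed right-and-down orientation.

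The conclusion is then immediate: in $H_n$ the vertex $(1,1)$ is the sole source and $(n,n)$ the sole sink, so by Corollary \ref{corollary:dbw_bw_number_of_sources_and_sinks} we have $\dbw(H_n) \geq \bw(u(H_n)) - 2 = \bw(G_n) - 2$, and combining Theorem \ref{thm:bounded_tw_bounded_bw} with the well-known fact $\tw(G_n) = n$ gives $\bw(G_n) \geq \tfrac{2}{3}(n+1)$. Hence $\dbw(H_n) \to \infty$ while $\dbw(J_n) = 0$ for every $n$, contradicting any purported bound $\dbw(H_n) \leq g(\dbw(J_n)) = g(0)$.

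The main obstacle is finding a single construction that achieves both (i) $\dbw(J_n) = 0$ and (ii) the ability to reach a high-$\dbw$ target via contractions that are provably $2$-contractible rather than arbitrary. Making every vertex a source or sink forces (i), but most natural such orientations admit no $2$-contractible arcs away from a few corners; subdividing each grid edge is what resolves this tension, since each subdivision vertex is a degree-$2$ element of the digraph that certifies the $2$-contractibility of exactly one arc in the target contraction sequence.
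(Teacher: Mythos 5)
Your overall strategy is the same as the paper's (a family of orientations with every vertex a source or a sink, hence directed branch-width $0$, from which one extracts topological minors of unbounded directed branch-width), and the bookkeeping at both ends is fine: $\dbw(J_n)=0$ follows from Lemma \ref{lemma:isolated_elements} exactly as in Corollary \ref{corollary:grid_dbw}, and the lower bound $\dbw(H_n)\geq \bw(u(H_n))-2$ via Corollary \ref{corollary:dbw_bw_number_of_sources_and_sinks} is correct. The gap is in the middle step: your arcs are not $2$-contractible. Definition \ref{def:2-contractible} imposes \emph{two} conditions, both of which must hold; you verify only the first ($b_{uv}\notin V_3$) and explicitly stop there. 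For the arc $\dir{v\,b_{uv}}$, with $x=v$ and $y=b_{uv}$, we have $\dir{b_{uv}\,v}\notin E(J_n)$, so the second condition requires that \emph{no} pair $(w,z)\in V_3\times V_3$ exists with $v$ reaching $w$ and $z$ reaching $b_{uv}$ in $J_n-\dir{v\,b_{uv}}$. But whenever $u$ and $v$ are non-corner grid vertices they each have at least three incident arcs, so both lie in $V_3$; taking $w=v$ (reachable from $v$ by the trivial path) and $z=u$ (which reaches $b_{uv}$ along $\dir{u\,b_{uv}}$) exhibits exactly such a pair. So all but a handful of your proposed contractions are illegal, and the same objection applies to the partially contracted intermediate digraphs.

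This is not merely a missing verification but an obstacle to the whole construction: each of your contractions would create new reachability between two $V_3$-vertices (before contracting, $u$ cannot reach $v$; afterwards $u$ reaches the merged vertex and every sink around it), and preventing precisely this is the purpose of the second condition. Indeed, in $J_n$ no grid vertex can reach any other grid vertex, while in $H_n$ the corner $(1,1)$ reaches everything, so no sequence of $2$-contractions can take you from one to the other. The paper's construction circumvents this by arranging that the head $a_i$ of each contracted arc $\dir{s_ia_i}$ has degree $2$ and is reachable only from a degree-$1$ vertex $b_i$, so the ``$z$'' side of the forbidden pair has no candidate in $V_3$ at all; the new reachability created (from $b_i$ into the cycle) emanates only from a low-degree vertex and is therefore permitted. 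If you want to repair your argument you would need to redesign $J_n$ so that every contracted arc has this property, which is essentially what the paper's gadget $\dir{s_i a_i}, \dir{b_i a_i}$ achieves.
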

\begin{proof}
We shall construct a set of graphs $\{D_{n}: n \in \mathbb{N}\}$ of bounded directed branch-width and show that there is a set $\{\Delta_{n}' : n \in \mathbb{N}\}$ of directed topological minors of elements of $\{D_{n}: n \in \mathbb{N}\}$ which has unbounded directed branch-width.

Define $D_n$ by starting from the $n$-vertex directed cycle $C_n$ and proceeding as follows:
\begin{itemize}
    \item add $n$ sources $s_1, \ldots, s_n$, each one adjacent to every vertex of $C_n$
    \item add the vertices $a_1,\ldots, a_n$ and $b_1, \ldots, b_n$ and, for each $i \in [n]$, add the edges $\dir{s_ia_i}$ and $\dir{b_ia_i}$ (see Figure \ref{fig:dbw_not_topo_minor_closed}).
\end{itemize}
We will now show that $\dbw(D_n) \leq 3$. Let $D_n'$ be the source-sink-split of $D_n$ and note that $u(D_n')$ is a cycle with $n$ pendant edges at each vertex together with some isolated edges (corresponding to the edges $\dir{s_ia_i}$ and $\dir{b_ia_i}$). Thus, by inspection, $u(D_n')$ has tree-width $2$ (since deleting a single vertex from the cycle in $u(D_n')$ yields a forest). By Theorem \ref{thm:bounded_tw_bounded_bw}, this implies that $\bw(u(D_n')) \leq 3$ which, by Corollary \ref{corollary:dbw_bw_number_of_sources_and_sinks}, implies that $\dbw(D_n') \leq 3$ and hence, by Lemma \ref{lemma:dbw_invariant_source_sink_identification}, that $\dbw(D_n) \leq 3$.

Now we will obtain a set $\{\Delta_n : n \in \mathbb{N}\}$ having unbounded directed branch-width such that $\Delta_n$ is a directed topological minor of $D_{n}$. Note that each edge $\dir{s_ia_i}$ is $2$-contractible since, for each $i$, $a_i$ has degree $2$ and since the only vertex that can reach $a_i$ in $D_{n} - \dir{s_ia_i}$ (namely $b_i$) has degree $1$. Thus, by contracting every edge $\dir{s_ia_i}$, we construct the digraph  $\Delta_{n}$ which is a directed topological minor of $D_{n}$. Now consider the digraph $\Delta_{n}'$ obtained from $\Delta_{n}$ by identifying all of the sources $b_1, \ldots, b_n$ into a single source $b$ (see Figure \ref{fig:dbw_not_topo_minor_closed}). By Lemma \ref{lemma:dbw_invariant_source_sink_identification} we know that $\dbw(\Delta_{n}') = \dbw(\Delta_{n})$ so we shall conclude the proof by showing that $\{\Delta_{n}': n \in \mathbb{N}\}$ has unbounded directed branch-width.
Since $u(\Delta_{n}')$ contains the balanced complete bipartite graph $K^{n,n}$ as a subgraph and since $\tw(K^{n,n}) = n$~\cite{Diestel2010GraphTheory}, we deduce (via Theorem \ref{thm:bounded_tw_bounded_bw}) that \[\bw(u(\Delta_{n}')) \geq 2( \tw(K^{n,n})/3 -1) = 2n /3 - 2.\] Finally, since $\Delta_{n}'$ has at exactly one source and no sinks, we deduce by Corollary \ref{corollary:dbw_bw_number_of_sources_and_sinks} that \[\dbw(\Delta_{n}') \geq \bw(u(\Delta_{n}')) - 1 \geq 2 n /3 - 3. \]
Thus we conclude that $\{D_{n}: n \in \mathbb{N}\}$ has bounded directed branch-width and $\{\Delta_{n}' : n \in \mathbb{N}\}$ has unbounded directed branch-width as required.
\end{proof}

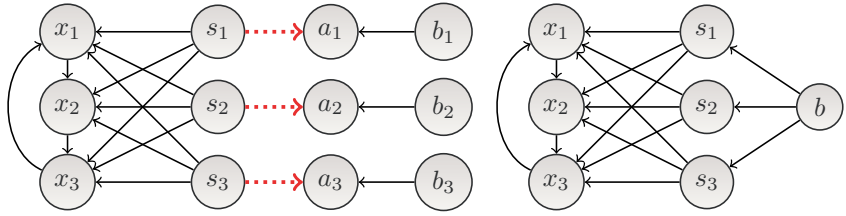
\begin{figure}[h]
\centering
\begin {tikzpicture}[-latex ,auto ,node distance =1 cm and 1cm ,on grid ,
semithick , state/.style ={ circle ,top color =softGray , bottom color = creamWhite ,
draw, softBlack , text=softBlack , minimum width =0.15 cm}]

\node[state] (X1) [] {$x_1$};
\node[state] (X2) [below =of X1] {$x_2$};
\node[state] (X3) [below =of X2] {$x_3$};

\node[state] (S1) [right =2cm of X1] {$s_1$};
\node[state] (S2) [below =of S1] {$s_2$};
\node[state] (S3) [below =of S2] {$s_3$};

\node[state] (A1) [right =1.5cm of S1] {$a_1$};
\node[state] (A2) [below =of A1] {$a_2$};
\node[state] (A3) [below =of A2] {$a_3$};

\node[state] (B1) [right =1.5cm of A1] {$b_1$};
\node[state] (B2) [below =of B1] {$b_2$};
\node[state] (B3) [below =of B2] {$b_3$};

\path (X1) edge [->] (X2);
\path (X2) edge [->] (X3);
\path (X3) edge [->, bend left = 65 ] (X1);

\path (S1) edge [->] (X1);
\path (S1) edge [->] (X2);
\path (S1) edge [->] (X3);

\path (S2) edge [->] (X1);
\path (S2) edge [->] (X2);
\path (S2) edge [->] (X3);

\path (S3) edge [->] (X1);
\path (S3) edge [->] (X2);
\path (S3) edge [->] (X3);

\path (S1) edge [->, color = brick, line width=0.50mm, dotted] (A1);
\path (S2) edge [->, color = brick, line width=0.50mm, dotted] (A2);
\path (S3) edge [->, color = brick, line width=0.50mm, dotted] (A3);

\path (B1) edge [->] (A1);
\path (B2) edge [->] (A2);
\path (B3) edge [->] (A3);


\node[state] (X1') [right = 1.5cm of B1] {$x_1$};
\node[state] (X2') [below =of X1'] {$x_2$};
\node[state] (X3') [below =of X2'] {$x_3$};

\node[state] (S1') [right =2cm of X1'] {$s_1$};
\node[state] (S2') [below =of S1'] {$s_2$};
\node[state] (S3') [below =of S2'] {$s_3$};

\node[state] (B) [right =1.5cm of S2'] {$b$};

\path (X1') edge [->] (X2');
\path (X2') edge [->] (X3');
\path (X3') edge [->, bend left = 65] (X1');

\path (S1') edge [->] (X1');
\path (S1') edge [->] (X2');
\path (S1') edge [->] (X3');

\path (S2') edge [->] (X1');
\path (S2') edge [->] (X2');
\path (S2') edge [->] (X3');

\path (S3') edge [->] (X1');
\path (S3') edge [->] (X2');
\path (S3') edge [->] (X3');

\path (B) edge [->] (S1');
\path (B) edge [->] (S2');
\path (B) edge [->] (S3');

\end{tikzpicture}
\caption{
    \textbf{Left}: the graph $D_{3}$ defined in the proof of Theorem \ref{thm:dbw_not_topo_minor_closed} (the relevant $2$-contractible edges are drawn red and dotted). \textbf{Right}: the graph $\Delta'_{3}$. 
}\label{fig:dbw_not_topo_minor_closed}
\end{figure}

Now we turn our attention to butterfly minors and show that directed branch-width is a butterfly-minor-closed parameter. 

\begin{theorem}
If $H$ is a butterfly minor of a digraph $D$, then $\dbw(H) \leq \dbw(D)$.
\end{theorem}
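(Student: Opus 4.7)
The plan is a straightforward reduction to a single butterfly-edge contraction. Since the butterfly-minor relation is generated by subgraph operations and butterfly-edge contractions, and since Lemma \ref{lemma:dbw_subgraph_closed} already handles the subgraph step, an easy induction on the length of a contraction sequence reduces the claim to the following: for any digraph $D$ and any butterfly edge $e = \dir{xy}$ of $D$, we have $\dbw(D/e) \leq \dbw(D)$.

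To prove this single-edge statement I would start from any directed branch-decomposition $(T, \beta)$ of $D$ of width $k$ and build a directed branch-decomposition $(T', \beta')$ of $D/e$ of width at most $k$. Let $\ell$ be the leaf of $T$ with $\beta(\ell) = \dir{xy}$; obtain $T'$ by deleting $\ell$ and suppressing its (now degree-$2$) parent. The butterfly conditions --- $y$ has only $x$ as in-neighbor and $x$ has only $y$ as out-neighbor --- give a natural bijection $E(D) \setminus \{\dir{xy}\} \to E(D/e)$ that sends $\dir{ux} \mapsto \dir{u v_e}$, $\dir{yw} \mapsto \dir{v_e w}$, and fixes every edge disjoint from $\{x,y\}$. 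Composing $\beta$ with this bijection defines $\beta'$.

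The technical crux is to check that every edge of $T'$ has order at most $k$. Each such edge $\varepsilon'$ either comes from an edge of $T$ or arises from the suppression of the two edges of $T$ incident with $\ell$'s parent; in both cases, the edge partition $(A, B)$ of $E(D/e)$ that $\varepsilon'$ induces lifts to an edge partition $(A', B')$ of $E(D)$ that is induced by some edge of $T$ (hence has order at most $k$) and in which $\dir{xy}$ sits on a specified side; up to swapping $A$ and $B$, assume $\dir{xy} \in A'$. Writing $S_1, S_2$ and $S_1', S_2'$ for the directed vertex separators in $D/e$ and $D$ respectively, the key claim is $|S_1 \cup S_2| \leq |S_1' \cup S_2'|$. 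For $v \neq v_e$ this is immediate: the edge-bijection preserves the adjacencies that witness $v$'s membership in $S_1$ or $S_2$. For $v_e$ itself, if $v_e \in S_1$ then there exist $\dir{ux} \in A'$ and $\dir{yw} \in B'$; the pair $\dir{xy} \in A'$, $\dir{yw} \in B'$ then witnesses $y \in S_1'$. Symmetrically, $v_e \in S_2$ forces $x \in S_2'$. Since the only preimages of $v_e$ are $x$ and $y$, these are distinct from the preimages of the other elements of $S_1 \cup S_2$, and the inequality follows.

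The substantive step is exactly the $v_e$ analysis above, and this is where the butterfly hypothesis is doing real work: it guarantees that every directed walk entering $v_e$ in $D/e$ enters $x$ in $D$ and every walk leaving $v_e$ leaves $y$, so appearances of $v_e$ in the separators of the contracted digraph are always paid for by a witness in $\{x, y\} \cap (S_1' \cup S_2')$. Without the butterfly conditions this charging breaks down, which is morally consistent with the failure of closure under the broader notion of directed topological minors (Theorem \ref{thm:dbw_not_topo_minor_closed}).
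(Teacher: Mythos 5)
Your proposal is correct and follows essentially the same route as the paper: reduce to a single butterfly-edge contraction via subgraph-closure, transfer the decomposition through the natural bijection between $E(D)\setminus\{\dir{xy}\}$ and $E(D/\dir{xy})$ after pruning the leaf for $\dir{xy}$, and bound the new separators by the old ones. Your explicit charging of $v_e$ to $x$ or $y$ is just a slightly more detailed verification of the separator inequality that the paper asserts directly.
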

\begin{proof}
Since directed branch-width is subgraph-closed (by Lemma \ref{lemma:dbw_subgraph_closed}) it suffices to show that it is not increased under contractions of butterlfy edges. We will show that, given any directed branch decomposition $(T, \beta)$ of $D$, we can construct a directed branch decomposition of $D /_{\dir{xy}}$ of width at most that of $(T, \beta)$.

Let $\dir{xy}$ be a butterfly edge in $D$ and let $\omega$ be the vertex of $D/_{\dir{xy}}$ created by the contraction of $\dir{xy}$. Let $\gamma: E(D) \setminus \{\dir{xy}\} \to E(D/_{\dir{xy}})$ be the bijection defined for any edge $\dir{st}$ in $E(D) \setminus \{\dir{xy}\}$ as: 
\[
 \gamma(\dir{st}) := 
            \begin{cases}
                \dir{s \omega} \text{ if } t = x\\
                \dir{\omega t} \text{ if } s = y\\
                \dir{st} \text{ otherwise}.
            \end{cases}
\]
Note that $\gamma$ is well-defined since $x$ and $y$ are respectively a sink and a source in $E(D) \setminus \{\dir{xy}\}$ (by the definition of butterfly edge). 

We will use $\gamma$ to construct a directed branch-decomposition $(U, \delta)$ of $D/_{\dir{xy}}$ from $(T, \beta)$ as follows. Let $U$ be the inclusion-wise minimal subtree of $T$ connecting all leaves of $T$ which are not mapped to $\dir{xy}$ by $\beta$. Let $\delta := \gamma \circ (\beta|_{\ell(U)})$ where $\beta|_{\ell(U)}$ denotes the restriction of $\beta$ to the set $\ell(U)$ of leaves of $U$ (note that, by the definition of $\gamma$, $\delta$ is a bijection between the leaves of $U$ and the edges of  $D/_{\dir{xy}}$).

Let $e$ be an edge in $U$ (and note that $e \in E(T)$ also). Let $X_e^{(T, \beta)}$ be the bidirected separator at $e$ in $(T, \beta)$ and let $X_e^{(U, \delta)}$ be the bidirected separator at $e$ in $(U, \delta)$. By the definition of $(U, \delta)$, we have that
\[
    X_e^{(U, \delta)} = 
    \begin{cases}
        \{\omega\} \cup (X_e^{(T, \beta)} \setminus \{x,y\}) \text{ if } \{x,y\} \cap X_e^{(T, \beta)} \neq \emptyset \\
        X_e^{(T, \beta)} \text{otherwise}
    \end{cases}
\]
and hence that $|X_e^{(U, \delta)}| \leq |X_e^{(T, \beta)}|$. Since this is true for any edge $e$, it implies that $\dbw(D/_{\dir{xy}}) \leq \dbw(D)$ as required.
\end{proof}

\subsection{Comparison to other digraph width measures.}\label{subsec:dbw_comparison_to_other_measures}
All of the ``tree-width-inspired'' width measures (such as \emph{directed tree-width}, \emph{\texttt{DAG}-width}, \emph{$D$-width} and \emph{Kelly-width}) are bounded on the class of all \texttt{DAGs}~\cite{kreutzer2018}. In contrast we show that the class of all \texttt{DAGs} has \emph{unbounded} directed branch-width.

\begin{theorem}\label{theorem:DAGs_unbounded_dbw}
For any $n \in \mathbb{N}$, there exists a directed acyclic graph $D_n$ with $\dbw(D_n) \geq n$.
\end{theorem}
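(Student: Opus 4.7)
The plan is to exhibit, for each $n$, an explicit DAG whose directed branch-width is at least $n$. The natural candidate is an acyclic orientation of a large grid, since grids are known to have large branch-width in the undirected sense. Note that Corollary \ref{corollary:grid_dbw} does not apply here, because its alternating orientation makes every vertex a source or sink, collapsing all directed separators; we instead want an orientation with very few sources and sinks so that Corollary \ref{corollary:dbw_bw_number_of_sources_and_sinks} lets us transfer the undirected branch-width lower bound.

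Concretely, I would let $m = m(n)$ be a sufficiently large integer (say $m \geq (3n+4)/2$) and define $D_n$ as the orientation of the $m \times m$ grid obtained by directing every horizontal edge $(i,j)(i,j{+}1)$ as $\dir{(i,j)(i,j{+}1)}$ and every vertical edge $(i,j)(i{+}1,j)$ as $\dir{(i,j)(i{+}1,j)}$. This is a DAG because the coordinate sum $i+j$ strictly increases along every arc, giving a topological order. Moreover, the only vertex with in-degree zero is $(1,1)$ and the only vertex with out-degree zero is $(m,m)$, so the set $S$ of sources and sinks of $D_n$ has size exactly $2$.

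From here, the estimates are routine. The underlying undirected graph $u(D_n)$ is the $m\times m$ grid, which has tree-width $m$ (a classical fact cited in \cite{Diestel2010GraphTheory}), so by Theorem \ref{thm:bounded_tw_bounded_bw} its branch-width satisfies $\bw(u(D_n)) \geq \tfrac{2}{3}(\tw(u(D_n))+1) = \tfrac{2(m+1)}{3}$. Applying Corollary \ref{corollary:dbw_bw_number_of_sources_and_sinks} with $|S|=2$ then yields
\[
\dbw(D_n) \;\geq\; \bw(u(D_n)) - |S| \;\geq\; \frac{2(m+1)}{3} - 2,
\]
and the choice of $m$ ensures this quantity is at least $n$.

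There is no real obstacle; the only point that needs care is justifying that the source/sink count stays bounded while the underlying branch-width grows, which is precisely what the ``rightward-and-downward'' orientation achieves. This is exactly the situation where Corollary \ref{corollary:dbw_bw_number_of_sources_and_sinks} is strong enough to preserve high branch-width under orientation, in contrast with the alternating orientation of Corollary \ref{corollary:grid_dbw}, where nearly every vertex is a source or a sink and the bound becomes vacuous.
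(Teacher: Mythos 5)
Your proof is correct and follows essentially the same route as the paper's: both orient a sufficiently large square grid consistently (so that it is acyclic with exactly one source and one sink), invoke the tree-width of the grid together with Theorem \ref{thm:bounded_tw_bounded_bw} to lower-bound the underlying branch-width, and then apply Corollary \ref{corollary:dbw_bw_number_of_sources_and_sinks} with $|S|=2$. The arithmetic checks out, so nothing further is needed.
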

\begin{proof}
Let $\kappa = 3(n+2)/2 - 1$ and $D_n$ be the acyclic digraph obtained by orienting all edges of the $\kappa \times \kappa$ grid north-east, i.e. the digraph 
\[D_n := \Bigl ( [\kappa]^2, \; \{\dir{(a,b)(c,d)}: (a = c \text{ and } d = b+1) \text{ or } (b = d \text{ and } c = a+1)\} \Bigr ). \]
Recall that, since $u(D_{n})$ is a square grid, it has tree-width $\kappa = 3(n+2)/2 - 1$~\cite{Diestel2010GraphTheory}. Thus we can invoke Theorem \ref{thm:bounded_tw_bounded_bw} to deduce that $\bw(u(D_n)) \geq n + 2$. Since $D_n$ has exactly one source and one sink (the south-west and north-east corners), we know (by Corollary \ref{corollary:dbw_bw_number_of_sources_and_sinks}) that $\dbw(D_n) \geq \bw(u( D_{n} )) -2$, so the result follows.
\end{proof}

Recall that each tree-width-inspired measure (and tree-width itself) can be defined via some variant of a cops-rober game played on either a digraph or an undirected graph respectively. Using Theorem \ref{theorem:DAGs_unbounded_dbw}, we will show that none of these games can be used to characterize directed branch-width. 

The cops-robber variants that can be used to define the tree-width-inspired measures all have the following common structure. The Sheriff can chose to add new cops to the board or move some cops that have been previously placed while the Villain can only move their piece. Each round follows this sequence of events: the Sheriff announces which cops they intend to move and where they intend to move them to, then they remove any cops that are involved in this move from the board; before the cops are placed back on the board, the Villain makes their move; finally the Sheriff completes their previously-announced move. 

Playing on a (directed) graph $D$, the state of the game after round $\rho$ is recorded by the pair $(C,r)_{\rho}$ where $C \subseteq V(D)$ is the set of positions of the cops and $r \in V(D)$ is the position of the robber. The initial state of the game is always of the form $(\emptyset, r)_0$ for some vertex $r$ in $D$ and the game terminates when a cop is placed on the vertex occupied by the robber. The Sheriff loses if the game never terminates and wins otherwise. The \emph{width of a play at time $\rho$} is the maximum number of cops that were ever placed at any round up to and including $\rho$. We say that the Sheriff has a \emph{$k$-winning strategy} on a (directed) graph $D$ if they can always win on $D$ with a play of width at most $k$. 

The cops can be moved (or placed) freely from their position to any other vertex in the graph (this is often conceptualized as the cops being able to move ``by helicopter''). Different game variants are distinguished by the knowledge of the Sheriff or the different ways in which a robber can be moved. The first distinction is whether or not the Sheriff knows the position of the robber; we call these games \emph{visible} and \emph{invisible-robber-game} respectively. The second important distinction emerges from how the Villain is allowed to move the robber. For the games used to characterize tree-width-inspired measures, the least restrictive movement pattern for the robber is for it to be movable from position $r$ in a (directed) graph $G$ to any vertex $r'$ which is reachable by a (directed) path from $r$ in $G - C$, where $C$ is the current vertex-set occupied by cops. This is called the \emph{weak reachability game}. We note that we also allow the Villain to pass their turn without moving their piece (this is referred to as an \emph{inert robber game}~\cite{kreutzer2018}). 

The games we have described can be used to characterize both tree-width and any tree-width inspired measure. An undirected graph $G$ has tree-width at most $k$ if and only if the Sheriff has a $(k+1)$-winning-strategy for the visible cops-robbers game on $G$~\cite{cygan2015parameterized}. A similar statement can be made about any tree-width-inspired measure $\mu$: there exists a cops-robber game $\Gamma_\mu$ such that the Sheriff has a $k$-winning-strategy for $\Gamma_\mu$ on a digraph $D$ if and only if $\mu(D) \leq k$~\cite{kreutzer2018}.

Note that the most general game variant is the \emph{invisible-robber inert-weak-reachability cops-robber game}. This follows since: (1) every strategy playable with a robber that cannot remain inert is also playable with one that can, (2) if the Sheriff can win the invisible-robber game, then they can also win in the visible version (by closing their eyes).

\begin{corollary}\label{corollary:no_cops_robbers}
The Sheriff has a $1$-winning strategy for the invisible-robber inert-weak-reachability cops-robber game on any directed acyclic graph.
\end{corollary}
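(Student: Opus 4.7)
The plan is to exhibit an explicit $1$-cop winning strategy for the Sheriff on any directed acyclic graph $D$, leveraging the existence of a topological ordering $v_1,\ldots,v_n$ of $V(D)$ in which every arc $\dir{v_iv_j}$ satisfies $i<j$. The strategy itself is almost forced: in round $1$ the Sheriff places the single cop at $v_1$ and, in each subsequent round $i \geq 2$, announces a move of the cop from $v_{i-1}$ to $v_i$. No cop other than this single piece is ever placed, so the play trivially has width $1$; the content of the proof is that the strategy does catch the invisible robber after at most $n$ rounds.

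To verify this, I would maintain the invariant that, immediately after the completion of round $i$, the set of possible robber positions (from the Sheriff's perspective) is contained in $\{v_{i+1},\ldots,v_n\}$. The base case is immediate, since placing the cop at $v_1$ either catches the robber or rules $v_1$ out. For the inductive step, I would consider the announce-remove-move-place cycle of round $i$ starting from the inductive hypothesis: after the cop has been removed from $v_{i-1}$ but before it is placed at $v_i$, the robber is free to relocate along any directed path of $D$. The key observation is that no vertex of $\{v_1,\ldots,v_{i-1}\}$ is reachable from any vertex of $\{v_i,\ldots,v_n\}$, since such a path would have to traverse an arc violating the topological order. Hence the set of possible robber positions does not expand back into the already-cleared prefix, so once the cop is placed at $v_i$ the set is contained in $\{v_{i+1},\ldots,v_n\}$, closing the induction. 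After round $n$ this set is empty, so the robber must have been caught.

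The only subtle point, and hence the main conceptual obstacle, is the careful accounting of the invariant across the announce-remove-move-place cycle prescribed by the game rules. During the brief window in which the cop is off the board, the inert-weak-reachability robber could a priori jump to any vertex reachable from its current location by a directed path, and nothing in the rules prevents it from retreating toward vertices cleared in earlier rounds. It is precisely the acyclicity of $D$ that rules this out and ensures that the progress made in earlier rounds is preserved; once this is articulated the rest of the argument is essentially bookkeeping.
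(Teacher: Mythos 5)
Your proof is correct and takes essentially the same route as the paper: sweep a single cop along a topological ordering $v_1,\ldots,v_n$ and use acyclicity to show the robber can never retreat into the cleared prefix. Your write-up is in fact more careful than the paper's, making explicit the invariant across the announce-remove-move-place cycle that the paper only gestures at.
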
 
\begin{proof}
Let $D$ be an acyclic digraph and let $v_1, \ldots, v_n$ be a topological ordering of its vertices. The Sheriff's strategy on $D$ is to place one cop at $v_1$ and then move it at every round to the next next vertex in the topological ordering. Note that, if the robber is moved at any round $\rho$ from some vertex $v_i$ to $v_j$, then it must be that $j \geq i$. Thus, since the cop will eventually reach $v_n$, the robber cannot escape indefinitely. 
\end{proof}

By Theorem \ref{theorem:DAGs_unbounded_dbw}, we know that the class of all \texttt{DAGs} has unbounded directed branch-width. Thus Corollary \ref{corollary:no_cops_robbers} implies the following result. 

\begin{corollary}
There is a family of digraphs of unbounded directed branch-width for which the Sheriff has a $1$-winning strategy for the invisible-robber inert-weak-reachability cops-robber game.
\end{corollary}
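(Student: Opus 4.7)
The plan is to construct the required family by simply invoking the two preceding results in sequence. First I would recall the family $\{D_n : n \in \mathbb{N}\}$ of acyclic digraphs constructed in Theorem \ref{theorem:DAGs_unbounded_dbw} (obtained by orienting the edges of the $\kappa \times \kappa$ grid, with $\kappa = 3(n+2)/2 - 1$, consistently north-east). Each $D_n$ is a \texttt{DAG} and, by Theorem \ref{theorem:DAGs_unbounded_dbw}, satisfies $\dbw(D_n) \geq n$, so the family has unbounded directed branch-width.

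Next I would apply Corollary \ref{corollary:no_cops_robbers} directly to each member of the family: since every $D_n$ is acyclic, the Sheriff has a $1$-winning strategy for the invisible-robber inert-weak-reachability cops-robber game on $D_n$ (the strategy being to place a single cop at the first vertex of a fixed topological ordering and to advance it one vertex per round). Combining these two facts, $\{D_n : n \in \mathbb{N}\}$ is the desired family.

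Since the argument is just the composition of the two stated results, there is no real obstacle; the only thing to take care of is to make it explicit that the family exhibited by Theorem \ref{theorem:DAGs_unbounded_dbw} consists of \texttt{DAGs}, so that Corollary \ref{corollary:no_cops_robbers} applies uniformly to every member of the family and not merely to some single digraph.
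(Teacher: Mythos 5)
Your proposal is correct and matches the paper's own argument exactly: the paper also obtains this corollary by combining Theorem \ref{theorem:DAGs_unbounded_dbw} (the family of north-east oriented grids is a family of \texttt{DAGs} of unbounded directed branch-width) with Corollary \ref{corollary:no_cops_robbers} (the Sheriff has a $1$-winning strategy on every \texttt{DAG}). No gaps.
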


Undirected graph classes of bounded branch-width also have bounded rank-width~\cite{CorneilRoticsTwCw, oum2017rank}. We will show that this is not true for the directed analogues of these measures: boundedness of directed branch-width does not imply boundedness of bi-cut-rank-width (nor does it imply the boundedness of any rank-width inspired measure, by Theorem \ref{thm:clique_width_equivalence}). To do so, we fist recall a known result about the rank-width of grids. 

\begin{theorem}[\cite{jelinek2010rank}]\label{thm:rw_of_grids}
The class of all undirected grids has unbounded rank-width.
\end{theorem}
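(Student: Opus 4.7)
My plan is to lower-bound the rank-width of the $n \times n$ grid $G_n$ by roughly $\sqrt{n}$ by exploiting the combination of grid isoperimetry and a pigeonhole/linear-algebra argument on cut matrices. Since rank-width is defined via a layout minimizing the maximum rank over all vertex-partitions induced by tree-edges, a standard averaging argument (walk toward the heavier side of the tree) produces, in any rank-decomposition $(T,\tau)$ of $G_n$, an edge of $T$ whose associated partition $(X, V(G_n)\setminus X)$ is balanced, say $n^2/3 \leq |X| \leq 2n^2/3$. Thus it suffices to show that for every such balanced bipartition of $V(G_n)$, the $\mathbb{GF}(2)$-rank of the cut matrix $M[X,\, V(G_n)\setminus X]$ grows with $n$.

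Next I would set up the isoperimetric input. A balanced bipartition of $V(G_n)$ in the grid cuts at least $\Omega(n)$ grid edges (the vertex boundary has size $\Omega(n)$), by the edge-isoperimetric inequality on $[n]\times[n]$. The bulk of the work is then to convert "many cut edges" into "high rank". This is where a purely combinatorial count is too weak, since, for example, a star cut would give many crossing edges of rank $1$; one has to use the fact that grids do not look like stars.

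The main step I propose is a rectangle-chasing / pigeonhole argument extracting $\Omega(\sqrt{n})$ rows of $M[X, V(G_n)\setminus X]$ that are linearly independent over $\mathbb{GF}(2)$. Concretely, I would look at the $n$ rows of the grid and, on each row, the leftmost vertex of $V(G_n)\setminus X$ that is preceded by a vertex of $X$; because a vertex has at most $4$ neighbors in the grid, each row $r$ of $M$ corresponding to a vertex $v\in X$ that is adjacent to such a "transition" vertex $u\in V(G_n)\setminus X$ has support contained in the (at most four) grid-neighbors of $v$. By choosing transition points in $\Omega(\sqrt{n})$ different rows and columns of the grid, the corresponding supports in the cut matrix are pairwise incomparable (in fact almost disjoint), which forces linear independence. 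Combined with the balanced-partition step, this yields $\rw(G_n) \geq \Omega(\sqrt{n}) \to \infty$.

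The hard part will be engineering the $\Omega(\sqrt{n})$ transition witnesses so that their $\mathbb{GF}(2)$-supports are provably independent regardless of which balanced bipartition the adversary chooses; a naive isoperimetric count only guarantees $\Omega(n)$ boundary vertices, not $\Omega(\sqrt{n})$ independent cut-matrix rows. I would address this by a two-dimensional pigeonhole: the boundary of $X$ in the grid crosses $\Omega(\sqrt n)$ distinct rows or $\Omega(\sqrt n)$ distinct columns (otherwise $X$ is confined to a thin strip, contradicting balance), and choosing a transition vertex in each such row or column produces witnesses whose local neighborhoods live in disjoint rows/columns and hence give independent rows of $M[X,V(G_n)\setminus X]$. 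If a more delicate bound is needed, I would replace this by a tangle-style argument, exhibiting a large-order tangle in the grid cut-rank function and invoking the duality between tangles and branch-decompositions of submodular functions to conclude unbounded rank-width.
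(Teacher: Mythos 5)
The paper offers no proof of this statement: it is imported wholesale from Jel\'{i}nek's work, where the rank-width of the $n\times n$ grid is in fact determined exactly (it equals $n-1$) by a considerably more delicate analysis of both upper and lower bounds. Your argument is therefore a genuinely different, self-contained route, and its overall plan is sound: the balanced-edge averaging step for subcubic layouts is standard, and the ``transition vertex'' idea does convert a balanced bipartition into many independent rows of the cut matrix. One micro-claim needs repair: pairwise \emph{incomparable} supports do \textbf{not} force linear independence over $\mathbb{GF}(2)$ (e.g.\ $(1,1,0),(0,1,1),(1,0,1)$ sum to zero), so you must actually achieve \emph{disjoint} supports, which you can do by selecting transition witnesses in grid-rows pairwise at distance at least $3$, since each witness's row of $M[X,V\setminus X]$ is nonzero and supported in a band of three consecutive grid-rows. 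Also, your two-dimensional pigeonhole can be sharpened and made rigorous: if at most $k$ grid-rows and at most $k$ grid-columns are mixed (meet both $X$ and its complement), then either some grid-row lies wholly in $X$ and another wholly in $V\setminus X$, forcing every grid-column to be mixed, or one side of the partition is confined to the $k$ mixed grid-rows, so balance forces $k\geq n/3$; this yields $\Omega(n)$ independent rows rather than the $\Omega(\sqrt{n})$ you claim, matching the true linear growth. The isoperimetric inequality and the tangle-duality fallback are then unnecessary. What your approach buys is an elementary, few-line lower bound proving unboundedness; what the citation to Jel\'{i}nek buys is the exact value, which the present paper does not need.
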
 

For directed graphs, the bi-cut-rank-width version of Theorem \ref{thm:rw_of_grids} follows by the following theorem.

\begin{theorem}[\cite{gurski2016directedNLC, kreutzer2018}]\label{thm:bcrw_grids}
For any digraph $D$, $\bcrk(D) \geq \rw(u(D))$.
\end{theorem}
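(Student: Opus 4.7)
The plan is to take any bi-cut-rank decomposition $(T, \beta)$ of $D$ achieving width $\bcrk(D)$ and exhibit it, essentially unchanged, as a rank-decomposition of $u(D)$ of no larger width. This is possible because $V(D) = V(u(D))$, so the bijection $\beta$ from the leaves of $T$ to $V(D)$ is already a valid bijection to $V(u(D))$; it therefore suffices to compare, edge-by-edge of $T$, the rank-width order of a partition in $u(D)$ with the bi-cut-rank-width order of the same partition in $D$. Taking the maximum over $e \in E(T)$ then gives $\rw(u(D)) \le \bcrk(D)$.

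The heart of the argument is the observation that, over $\mathbb{GF}(2)$, the adjacency matrices satisfy $M_{u(D)} = M_D + M_D^T$ (treating any parallel edges that arise from antiparallel arc pairs with the multigraph-mod-two convention standard in the rank-width literature). Fixing an edge $e$ of $T$ with associated vertex partition $(V(D) \setminus X, X)$ and restricting to the rows indexed by $V(D) \setminus X$ and columns indexed by $X$, this identity becomes
\[ M_{u(D)}[V(D) \setminus X, X] \;=\; M_D[V(D) \setminus X, X] \;+\; \bigl(M_D[X, V(D) \setminus X]\bigr)^{T}. \]
Combining subadditivity of rank with the invariance of rank under transposition immediately yields
\[ \rk\bigl(M_{u(D)}[V(D) \setminus X, X]\bigr) \;\le\; \rk\bigl(M_D[V(D) \setminus X, X]\bigr) \;+\; \rk\bigl(M_D[X, V(D) \setminus X]\bigr), \]
and the right-hand side is by definition the contribution of $e$ to the bi-cut-rank-width of $(T, \beta)$ (see Definition \ref{def:bi-cut-rank-width}).

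The main obstacle I anticipate is purely bookkeeping around the convention for $u(D)$ when $D$ contains antiparallel arc pairs $\dir{uv}, \dir{vu}$: such a pair contributes $0$ to the relevant entry of $M_D + M_D^T$ over $\mathbb{GF}(2)$, whereas $u(D)$ still has an edge at $uv$ when interpreted as a simple graph. I would address this by adopting the multigraph-mod-two convention already implicit in the $\mathbb{GF}(2)$-based definition of bi-cut-rank-width, under which the identity $M_{u(D)} = M_D + M_D^T$ holds by construction and the rest of the proof collapses to the one-line rank inequality above. If instead the paper prefers the simple-graph interpretation, I would note that the inequality is only affected on positions where both $M_D$ and $M_D^T$ are nonzero, and one can absorb the resulting correction by observing that rank over $\mathbb{GF}(2)$ is still bounded by the sum of row-rank contributions from $M_D[V(D) \setminus X, X]$ and $(M_D[X, V(D) \setminus X])^T$, since each row of $M_{u(D)}[V(D) \setminus X, X]$ is determined by the corresponding rows of those two matrices.
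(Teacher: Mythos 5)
The paper itself gives no proof of this statement (it is quoted from \cite{gurski2016directedNLC, kreutzer2018}), so I can only assess your argument on its own terms. Your strategy --- reuse an optimal bi-cut-rank decomposition of $D$ as a rank decomposition of $u(D)$ and compare the two order functions cut by cut --- is the natural one, and it is correct whenever $D$ has no antiparallel pair $\dir{uv},\dir{vu}$: in that case the two cut submatrices $A := M_D[V(D)\setminus X, X]$ and $B := \bigl(M_D[X, V(D)\setminus X]\bigr)^{T}$ have disjoint supports, so $M_{u(D)}[V(D)\setminus X, X] = A + B$ and subadditivity of rank finishes the job. That digon-free case is in fact all this paper ever needs, since the theorem is only invoked for orientations of grids.

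For general digraphs, however, both of your patches fail, and the gap is real. The paper's $u(D)$ has an edge $uv$ whenever $\dir{uv}$ \emph{or} $\dir{vu}$ is present, so the correct identity over $\mathbb{GF}(2)$ is $M_{u(D)}[V(D)\setminus X, X] = A \vee B = A + B + A\circ B$ (entrywise OR, resp.\ Hadamard product), not $A+B$. Your first fix silently replaces $u(D)$ by a different graph in which digons cancel, which is not the graph the theorem is about. Your second fix rests on the claim that each row of $A\vee B$ being ``determined by'' the corresponding rows of $A$ and $B$ bounds its rank by $\rk(A)+\rk(B)$; but determination is not linearity, and the inequality $\rk(A\vee B)\le\rk(A)+\rk(B)$ is false. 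Concretely, take $V(D)\setminus X=\{u_1,u_2,u_3\}$ and $X=\{v_1,v_2,v_3\}$ with $\dir{u_iv_j}\in E(D)$ for $i,j\in\{1,2\}$ and $\dir{v_ju_i}\in E(D)$ for $i,j\in\{2,3\}$ (so the only digon is at $u_2v_2$); then $\rk(A)=\rk(B)=1$ and the cut has bi-cut-rank order $2$, while
\[
M_{u(D)}[V(D)\setminus X, X]=\begin{pmatrix}1&1&0\\1&1&1\\0&1&1\end{pmatrix}
\]
has rank $3$ over $\mathbb{GF}(2)$. So the per-cut inequality on which your whole proof rests can fail in the presence of antiparallel pairs; the most your method yields in general is $\rk(A\vee B)\le \rk(A)+\rk(B)+\rk(A)\rk(B)$, i.e.\ a quadratic rather than linear bound on $\rw(u(D))$ in terms of $\bcrk(D)$. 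Establishing the theorem exactly as stated for digraphs with digons requires an idea beyond transferring the decomposition cut by cut, or else a restriction of the statement to digraphs without antiparallel edges.
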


Thus Theorems \ref{thm:rw_of_grids} and \ref{thm:bcrw_grids} allow us to show that boundedness of directed branch-width does not imply boundedness of bi-cut-rank-width. 

\begin{theorem}
There does not exist a function $g: \mathbb{N} \to \mathbb{N}$ such that, for any digraph $D$, $\bcrk(D) \leq g(\dbw(D))$.
\end{theorem}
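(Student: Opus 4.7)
The plan is to exhibit a single family of digraphs that simultaneously has directed branch-width bounded by a constant and bi-cut-rank-width tending to infinity. The family $\vec{\Gamma}$ of acyclic orientations of the square grids (as defined just before Corollary \ref{corollary:grid_dbw}) is the natural candidate, since we have already established that every digraph in $\vec{\Gamma}$ has directed branch-width $0$ while the underlying undirected grids are known to have unbounded rank-width.

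Concretely, I would proceed as follows. First, recall from Corollary \ref{corollary:grid_dbw} that every $D \in \vec{\Gamma}$ satisfies $\dbw(D) = 0$, because in this orientation every vertex is a source or a sink and hence, by Lemma \ref{lemma:isolated_elements}, cannot appear in any bidirected separator. Second, invoke Theorem \ref{thm:bcrw_grids} to obtain the inequality $\bcrk(D) \geq \rw(u(D))$ for every digraph $D$. Third, for the specific elements of $\vec{\Gamma}$, note that $u(D)$ is precisely a square grid, and apply Theorem \ref{thm:rw_of_grids} to conclude that $\rw(u(D))$ is unbounded over the family $\vec{\Gamma}$. Chaining these yields a sequence $(D_n)_{n \in \mathbb{N}}$ in $\vec{\Gamma}$ with $\dbw(D_n) = 0$ for all $n$ but $\bcrk(D_n) \to \infty$.

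Finally, suppose for contradiction that there were a function $g: \mathbb{N} \to \mathbb{N}$ such that $\bcrk(D) \leq g(\dbw(D))$ for every digraph $D$. Applied to our sequence, this would force $\bcrk(D_n) \leq g(0)$ for every $n$, contradicting the unboundedness established above.

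Since all the ingredients are already in hand, no step is really an obstacle; the only thing to double-check is that the orientation used in defining $\vec{\Gamma}$ really does make $u(D)$ the full undirected square grid (it does, because orienting edges does not delete them), so that Theorem \ref{thm:rw_of_grids} applies directly. This makes the proof essentially a one-line combination of Corollary \ref{corollary:grid_dbw}, Theorem \ref{thm:bcrw_grids}, and Theorem \ref{thm:rw_of_grids}.
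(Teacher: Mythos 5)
Your proposal is correct and is essentially identical to the paper's own proof: both use the family $\vec{\Gamma}$ of acyclic grid orientations, combining Corollary \ref{corollary:grid_dbw} with Theorems \ref{thm:rw_of_grids} and \ref{thm:bcrw_grids}. No issues.
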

\begin{proof}
By Corollary \ref{corollary:grid_dbw}, there is an infinite set $\vec{\Gamma}$ of digraphs given by acyclic orientations of the square grids such that $\vec{\Gamma}$ has bounded directed branch-width. However, by Theorems \ref{thm:rw_of_grids} and \ref{thm:bcrw_grids}, we know that $\vec{\Gamma}$ has unbounded bi-cut-rank-width. 
\end{proof}


\section{Conclusion and open problems.}\label{sec:conclusion}
We introduce a new digraph width measure called directed branch-width by generalizing to the setting of digraphs a characterization of classes of bounded tree-width as classes of graphs whose line-graphs have bounded rank-width. Furthermore, we proved that the classes of digraphs of bounded directed branch-width are exactly those classes whose \textit{directed} line-graphs have bounded bi-cut-rank-width. 

From an algorithmic perspective (see the Appendix) directed branch-width occupies the middle ground  between undirected tree-width and its previously known directed analogues: although classes of bounded directed branch-width need not have bounded underlying tree-width, we can nonetheless exploit close connections between our new parameter and underlying tree-width to place many problems in $\fpt$ when parameterized by directed branch-width. We showed that the model-checking problem for a specific subset of $\MSO_2$-formulae (namely those for which the corresponding model-checking problem is source-sink invariant) is also in $\fpt$ parameterized by directed branch-width. Furthermore, we note that the directed analogues of the Hamilton Path, and Max-Cut problems are in $\fpt$ parameterized by directed branch-width. We remark, however, that, although the algorithmic applications of directed branch-width that we studied here can often be explained by the algorithmic power of its undirected counterpart, they are nonetheless interesting given the fact that both of the last two problems we just mentioned are $\mathbf{W[1]}$-hard when parameterized by any tree-width-inspired or rank-width-inspired measure~\cite{lampis2008algorithmic, fomin2010intractability}. Thus it is clear that the quest for algorithmically powerful directed analogues of tree-width is still ongoing and we believe that there is still much insight to be gained by exploring width measures such as directed branch-width which occupy a middle ground between underlying tree-width and the other known `tree-width-' and `rank-width-inspired' measures.

Directed branch-width opens new doors for the study of digraph connectivity which we describe now. This is partly due to a connection with \emph{tangles} which are objects allowing one to investigate global connectivity properties of graphs (see~\cite{robertsonX} for a definition). It is known that layouts of a \emph{symmetric sub-modular} function $f$ over a set $S$ are always dual to $f$-tangles over $S$~\cite{amini2009submodular, diestel2014unifying, diestel2017tangle, robertsonX}. In particular this means that it would be possible to define a notion of \emph{directed tangle} which is dual to directed branch-width. 


We point out that Giannopoulou, Kawarabayashi, Kreutzer and Kwon~\cite{kreutzerTangle} also introduced a notion under the name of `directed tangle'. However, the two notions are completely incomparable: while our definition of `directed tangle' is defined with respect to a connectivity function that is symmetric and submodular, the connectivity function that they use is not\footnote{in their notation, a \emph{directed separation} in a digraph $D$ is a pair $(L,R)$ of \emph{vertex subsets} of $D$ such that $L \cup R = V(D)$ and such that there is no \emph{pair} of edges $\dir{\ell_1r_1}$ and $\dir{r_2\ell_2}$ which `cross $(L,R)$ in opposite directions' in $D$ (i.e. s.t. $\{\ell_1, \ell_2\} \subseteq L$ and $\{r, r_2\} \subseteq R$) }. Thus, comparing our notion of `directed tangle' to theirs, is another interesting avenue for further research.

\appendix
\section{Algorithmic aspects of directed branch-width.}\label{sec:algorithms}
In this section, after reviewing some basic facts from parameterized complexity, we will first show that directed branch-width is computable in $\fpt$-time when parameterized by directed branch-width. Then we will see that, although classes of bounded directed branch-width need not have bounded underlying tree-width, we can nonetheless exploit close connections between our new parameter and underlying tree-width to place many problems (such as Directed Hamilton Path and Directed MaxCut) in $\fpt$ when parameterized by directed branch-width. 

\subsection{Algorithmic background}
A parameterized problem is \emph{slice-wise polynomial tractable} if there exists an algorithm $A$ which decides an instance $I$ with parameter $k$ in time at most $f(k)|I|^{g(k)}$, where $f$ and $g$ are computable functions. If $g$ is a constant function, then we call $A$ a \emph{fixed-parameter algorithm} with respect to the parameter $k$ and we call any parameterized problem admitting such an algorithm \emph{fixed parameter tractable}. The class of all slice-wise polynomial tractable problems is denoted $\xp$ and the class of all fixed parameter tractable problems is denoted $\fpt$. For other notions related to parameterized complexity (such as parameterized reductions), we refer the reader to~\cite{cygan2015parameterized, flum2006parameterized}.

\emph{Monadic second order logic} (denoted $\MSO$) is the fragment of second order logic which allows second-order quantification only over unary predicates. For graphs it is useful to distinguish two kinds of $\MSO$-logic: $\MSO_1$-logic allows second order quantification only over vertex-sets while $\MSO_2$-logic allows second order quantification both over vertex-sets and edge-sets. 

It is known that any problem expressible in $\MSO_2$-logic can be decided in linear time on classes of bounded tree-width.

\begin{theorem}[Courcelle's Theorem,~\cite{courcelle1990monadic}]\label{thm:courcelle}
There is a fixed-parameter algorithm which, given a graph $G$ of tree-width $k$ and an $\MSO_2$-formula $\phi$, decides whether $G \models \phi$ in linear time when both $k$ and $|\phi|$ are bounded by constants.
\end{theorem}

The authors of~\cite{ganian2010arethere} investigate the question of why there are no known digraph width measures that are as algorithmically successful as tree-width in terms of parameterizations for $\fpt$ algorithms. They prove that unless $\np \subseteq \mathbf{P} \: \setminus \mathbf{poly}$, it is impossible to obtain an $\xp$ algorithm for the $\MSO_1$-model-checking problem parameterized by any digraph width measure similar to tree-width-inspired measures. Their result (Theorem \ref{thm:ganian_thm}) mentions a digraph containment relation called \emph{directed topological minor relation}~\cite{ganian2010arethere} which we define formally in  Section \ref{sec:dbw_properties}.

\begin{theorem}[\cite{ganian2010arethere}]\label{thm:ganian_thm}
A digraph width measure $\delta$ is said to be tree-width bounding if there exists a computable function $b$ such that for every digraph $D$ with $\delta(D) \leq k$, we have $\tw(u(D)) \leq b(k)$. If $\delta$ is a digraph width measure which is neither tree-width-bounding nor closed under taking directed topological minors, then, \emph{unless} $\np \subseteq \mathbf{P} \: \setminus \mathbf{poly}$, there is no $\xp$ algorithm for the $\MSO_1$-model-checking problem parameterized by $\delta$.
\end{theorem}

We point out that Theorem \ref{thm:ganian_thm} excludes the existence of an $\xp$ algorithm: this is a much stronger claim than excluding fixed-parameter-tractability.

\subsection{Computing directed branch-width}
Here we will show that, despite being $\np$-complete, the problem of determining whether a digraph has branch-width at most $k$ can be solved in linear time for any constant $k$.

\noindent We begin with a formal statement of the problem.

\begin{framed}
\noindent
    \textbf{\textsc{Directed-Branch-Width}.} \\
    \textbf{Input:} a digraph $D$ and a positive integer $k$.\\
    \textbf{Question:} does $D$ have directed branch-width at most $k$?
\end{framed}

It is known that deciding whether the branch-width of an undirected graph is at most $k$ is an $\np$-complete problem~\cite{seymour1994call_comput_bw_NP_hard}. Since directed branch-width agrees with undirected branch-width on bi-directed orientations of undirected graphs (Corollary \ref{corollary:dbw_bidirected_orientations}), it follows that \textsc{Directed-Branch-Width} is $\np$-hard. Furthermore, given a directed branch decomposition $(T, \beta)$ of a digraph $D$, one can check in polynomial time the order of any edge of $T$. Since $T$ has $O(|E(D)|)$ edges, \textsc{Directed-Branch-Width} is in $\np$; thus we have shown the following result.

\begin{theorem}
\textsc{Directed-Branch-Width} is $\np$-complete. 
\end{theorem}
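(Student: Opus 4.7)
The plan is to handle each containment in turn, the $\NP$-hardness via a direct reduction from the undirected branch-width problem, and the $\NP$-membership by showing that a directed branch decomposition serves as a short certificate that can be verified in polynomial time.

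For $\NP$-hardness, I would reduce from the problem of deciding whether $\bw(G)\le k$ for an undirected graph $G$, which is $\NP$-complete by Seymour and Thomas \cite{seymour1994call_comput_bw_NP_hard}. Given an instance $(G,k)$, in polynomial time I would construct the bi-directed orientation $D$ of $G$ obtained by replacing every undirected edge $xy$ with the two arcs $\dir{xy}$ and $\dir{yx}$. This produces a digraph of size linear in $|G|$, so the reduction is polynomial. By Corollary \ref{corollary:dbw_bidirected_orientations} we have $\dbw(D)=\bw(G)$, so $\bw(G)\le k$ if and only if $\dbw(D)\le k$, giving a valid many-one polynomial-time reduction.

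For membership in $\NP$, I would use a directed branch decomposition $(T,\beta)$ of $D$ as the certificate. Because $T$ is a subcubic tree whose leaves are in bijection with $E(D)$, the tree has $O(|E(D)|)$ nodes and edges, so the certificate has polynomial size. Verification proceeds edge by edge: for each edge $e$ of $T$, compute the edge-partition $(E(D)\setminus X,X)$ it induces by reading off the leaves on each side of $e$; then compute the two directed separators $S_X^V$ and $S_{E(D)\setminus X}^V$ directly from Definition \ref{def:vert-sep}; finally check that $|S_X^V\cup S_{E(D)\setminus X}^V|\le k$. Each edge of $T$ can be handled in polynomial time, and there are at most $O(|E(D)|)$ edges, so the total verification time is polynomial in $|D|$. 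Combining this with the hardness reduction yields $\NP$-completeness.

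I do not expect any serious obstacle: both directions are essentially bookkeeping, since Corollary \ref{corollary:dbw_bidirected_orientations} already supplies the equivalence with the undirected case, and the separator definitions are purely combinatorial and can be evaluated in linear time per edge of $T$.
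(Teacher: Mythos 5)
Your proposal matches the paper's argument essentially exactly: the paper also establishes $\NP$-hardness via Corollary \ref{corollary:dbw_bidirected_orientations} applied to bi-directed orientations (reducing from undirected branch-width, $\NP$-complete by \cite{seymour1994call_comput_bw_NP_hard}), and $\NP$-membership by using a directed branch decomposition as a polynomial-size certificate whose $O(|E(D)|)$ tree-edges can each be checked in polynomial time. No gaps; your write-up is simply a more detailed version of the same proof.
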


A natural next question is whether computing the directed branch-width of a digraph $D$ is in $\fpt$ parameterized by $k$. This is known for the undirected case.

\begin{theorem}[\cite{berthome2013unified, bodlaender1997constructive}]\label{thm:bw_bodlaender}
There is a linear-time algorithm which decides for a fixed $k$ whether an undirected graph has branch-width at most $k$.
\end{theorem}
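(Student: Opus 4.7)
The plan is to reduce the computation of branch-width to a dynamic programming problem on a tree decomposition of bounded width, and then carry out that DP in linear time. First, by Theorem \ref{thm:bounded_tw_bounded_bw} we have $\tw(G) \le \max\{1, 3\bw(G)/2\} - 1$, so any graph with $\bw(G) \le k$ satisfies $\tw(G) \le 3k/2 - 1 =: k'$. Bodlaender's linear-time algorithm for tree-width (the same cited result provides the tree-width side) can be invoked as a preprocessing step: run it on $G$ with parameter $k'$. If it reports $\tw(G) > k'$, then certainly $\bw(G) > k$ and we return ``no''. Otherwise we obtain, in time $f(k) \cdot |V(G)|$, a nice tree decomposition $(T, (V_t)_{t \in T})$ of $G$ of width at most $k'$.

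The second step is a bottom-up DP on $(T, (V_t)_{t \in T})$ that decides whether $G$ admits a branch decomposition of width at most $k$. For each node $t$ of $T$, let $G_t$ denote the subgraph of $G$ induced by the edges whose endpoints all appear in bags at or below $t$; the interface with the rest of $G$ is the bag $V_t$, which has at most $k'+1$ vertices. The DP state at $t$ records a bounded-size combinatorial ``type'' describing all partial branch decompositions of $G_t$ that could be completed to a full branch decomposition of $G$ of width at most $k$. A partial branch decomposition of $G_t$ is a sub-cubic tree with leaves labelled by $E(G_t)$ together with, possibly, at most one ``open'' attachment edge; the relevant type should remember, for each such attachment, only which subset of $V_t$ lies on the boundary and how the at most $k$ edges crossing this boundary are grouped. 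Because $|V_t| \le k'+1$ and the interface cut has size at most $k$, the number of distinct types per node is bounded by a function of $k$ alone, so the join, introduce, and forget operations at each node take time $h(k)$ for some computable $h$, giving overall linear time.

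The main obstacle is defining the equivalence relation on partial branch decompositions so that only $h(k)$ many types survive: one must certify that two partial decompositions with the same type are interchangeable in every global completion, and this requires showing that only the ``trace'' on $V_t$ (which vertices of $V_t$ each partial bag of the partial decomposition uses, together with the structure of the carving on these traces) is relevant. A clean way to avoid doing this by hand is to note that ``$\bw(G) \le k$'' can be expressed by a $\MSO_2$-formula $\phi_k$: one quantifies existentially over a logarithmic sequence of edge-subsets encoding the hierarchical bipartitions of $E(G)$ given by a branch decomposition of depth $O(\log |E(G)|)$ and asserts that each of the $O(k)$-many induced separators has size at most $k$. Then Courcelle's Theorem (Theorem \ref{thm:courcelle}) applied to $(T, (V_t)_{t \in T})$ yields a linear-time decision algorithm. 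Either route --- hand-rolled DP or reduction to $\MSO_2$ on the tree-width-bounded graph --- delivers the claim, and the $\MSO_2$ route in particular pushes all the combinatorial difficulty into the well-established machinery of Theorem \ref{thm:courcelle}.
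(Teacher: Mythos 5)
This statement is a quoted background result (Bodlaender--Thilikos; Berthom\'e et al.): the paper gives no proof of it, so your argument can only be judged on its own merits. Your first step, running Bodlaender's tree-width algorithm with parameter $k' = \max\{1, 3k/2\} - 1$ and rejecting if the tree-width exceeds $k'$, is correct. But both of your proposed second steps have genuine gaps. For the hand-rolled dynamic program, the difficulty you flag is not a detail to be filled in later --- it is the entire content of the cited papers --- and your model of a partial solution is already wrong: the restriction of a width-$k$ branch decomposition of $G$ to the leaves labelled by $E(G_t)$ is a sub-cubic tree into which the leaves for $E(G)\setminus E(G_t)$ may later be grafted at \emph{arbitrarily many} points, not via ``at most one open attachment edge''; moreover what must be controlled is the number of \emph{vertices} in the separator of each tree-edge, not a grouping of at most $k$ crossing edges. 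Designing a bounded-size characteristic that records, for every edge of the partial tree, enough information about how the unseen edges will distribute over it is the branch-width analogue of the Bodlaender--Kloks characteristic, and your sketch does not supply it.

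The $\MSO_2$ escape route also fails as stated. A formula that existentially quantifies over ``a logarithmic sequence of edge-subsets'' is not a fixed formula: the number of set quantifiers, and hence $|\phi_k|$, grows with $|E(G)|$, so Theorem \ref{thm:courcelle} does not apply. The standard logical route is different: for each fixed $k$ the class of graphs of branch-width at most $k$ is minor-closed, hence by Robertson--Seymour it is characterized by a finite obstruction set, and excluding each fixed minor is $\MSO_2$-expressible; combined with the bounded tree-width from your first step this yields a linear-time decision procedure. But that argument is non-uniform in $k$ (the obstruction sets are not known to be computable from $k$) and produces no decomposition, which is precisely why the paper cites the constructive algorithms rather than arguing this way.
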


We can use Theorem \ref{thm:bw_bodlaender} and Lemma \ref{lemma:dbw_undir_lower_bound} to show that deciding whether a digraph has directed branch-width at most $k$ is in $\fpt$ parameterized by $k$.

\begin{theorem}\label{thm:FPT_construction}
\textsc{Directed-Branch-Width} is in $\fpt$ parameterized by $k$.
\end{theorem}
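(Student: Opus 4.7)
The plan is to reduce \textsc{Directed-Branch-Width} to the undirected branch-width problem via the source-sink-split construction (Definition \ref{def:terminating_minor}), and then invoke Theorem \ref{thm:bw_bodlaender} as a black box.

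First, given an input digraph $D$, I would compute its source-sink-split $H$ in linear time. Concretely, for every source $x$ of $D$ with out-degree $d^+(x) \geq 2$, replace $x$ with $d^+(x)$ fresh vertices $x_1, \dots, x_{d^+(x)}$, distributing one outgoing edge of $x$ to each copy; do the analogous splitting for each sink. The resulting digraph $H$ has every source and every sink of degree exactly $1$, and $D$ is obtained from $H$ by re-identifying these degree-$1$ source/sink copies, so $H$ is (by construction) a source-sink-split of $D$. Since each edge of $D$ is touched at most twice, this construction runs in time $O(|V(D)| + |E(D)|)$, and $H$ itself has $|E(H)| = |E(D)|$ edges and $|V(H)| \leq |V(D)| + 2|E(D)|$ vertices.

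Next, I would pass to the underlying undirected graph $u(H)$ (trivially computable in linear time) and invoke the FPT algorithm of Theorem \ref{thm:bw_bodlaender}, which, for fixed $k$, decides whether $\bw(u(H)) \leq k$ in time linear in the size of $u(H)$, hence linear in $|E(D)|$. By Lemma \ref{lemma:dbw_undir_lower_bound}, the equality $\bw(u(H)) = \dbw(D)$ holds, so the output of this decision procedure is precisely the answer to the \textsc{Directed-Branch-Width} instance $(D, k)$. Combining the two linear-time stages yields an overall running time of $f(k) \cdot (|V(D)| + |E(D)|)$ for some computable function $f$, which is exactly the FPT guarantee we want.

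There is no real obstacle here beyond bookkeeping: the substantive work has already been done in Lemma \ref{lemma:dbw_undir_lower_bound} (which lets us translate the directed problem into an undirected one without changing the answer) and in Theorem \ref{thm:bw_bodlaender} (which supplies the FPT algorithm for the undirected side). The one point worth emphasizing in the write-up is that the source-sink-split can blow up the vertex count by a factor proportional to the maximum source/sink degree, but it does \emph{not} blow up the edge count, so the linear-time guarantee of Bodlaender's algorithm applied to $u(H)$ remains linear in the size of the original input $D$.
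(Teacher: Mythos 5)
Your proposal is correct and follows essentially the same route as the paper: compute the source-sink-split, apply Lemma \ref{lemma:dbw_undir_lower_bound} to equate $\bw(u(H))$ with $\dbw(D)$, and run the linear-time algorithm of Theorem \ref{thm:bw_bodlaender} on $u(H)$. Your bookkeeping on the vertex/edge blow-up is, if anything, slightly sharper than the paper's.
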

\begin{proof}
Letting $D'$ be the source-sink-split of $D$, we know (Lemma \ref{lemma:dbw_undir_lower_bound}) that $\bw(u(D')) = \dbw(D)$. A source-sink-split can be obtained in polynomial time: for each source (or sink) $x$ with $N(x) = \{y_1, \dots, y_\eta\}$, replace $x$ with sources (or sinks) $z_1, \ldots, z_\eta$ such that $z_i$ is adjacent to $y_i$. Since $|V(D')|$ is $O(|V(D)|^2)$, we can use the $\fpt$ algorithm of Theorem \ref{thm:bw_bodlaender} to determine whether the undirected branch-width of $u(D')$ is at most $k$ and hence compute whether the directed branch-width of $D$ is at most $k$ in time linear in $|E(D')|$ (which equals $|E(D)|$). 
\end{proof}

\subsection{Parameterizations by directed branch-width}
As we mentioned in Section \ref{sec:Intro}, despite their great success in some cases, the tree-width and rank-width-inspired measures still face considerable shortcomings when compared to the algorithmic power of undirected tree-width. In particular recall that the directed versions of the Hamilton Path and Max-Cut problems are $\mathbf{W[1]}$-hard when parameterized by any rank-width or tree-width-inspired measure~\cite{fomin2010intractability,lampis2008algorithmic}.

\begin{framed}
\noindent
\textbf{\textsc{D-Hamilton-Path}.}\\
\textbf{Input:} a digraph $D$.\\
\textbf{Question:} does $D$ contain a directed Hamiltonian Path?
\end{framed}

\begin{framed}
\noindent
\textbf{\textsc{D-Max-Cut}.} \\
\textbf{Input:} a digraph $D$ and an integer $k$. \\
\textbf{Question:} is there a vertex partition of $D$ of directed order at least $k$?
\end{framed}

Here we will show that, although classes of bounded directed branch-width need not have bounded underlying tree-width, we can nonetheless exploit close connections between our new parameter and underlying tree-width to place \textsc{D-Hamilton-Path} and \textsc{D-Max-Cut} in $\fpt$ when parameterized by directed branch-width. We will do so by first showing some general results providing sufficient conditions which guarantee that a problem is in $\fpt$ parameterized by directed branch-width. 

One notion important for our sufficient conditions will be that of invariance under source-sink-identifications. 

\begin{definition}
We call a decision problem $\Pi$ \emph{source-sink-invariant} if, given any two digraphs $D$ and $H$ equivalent under source-sink-identifications, $D$ is a yes-instance for $\Pi$ if and only if $H$ also is.
\end{definition}

Note that, while problems such as Directed Feedback Vertex Set, Acyclic Coloring and \textsc{D-Max-Cut} are source-sink-invariant (see the following lemma for \textsc{D-Max-Cut}), the \textsc{D-Hamilton-Path} problem is not. To see this, consider the digraph $J = (\{a,b,c\}, \{\dir{ab}, \dir{cb}\})$. Clearly $J$ has no directed Hamiltonian path; however, after performing all possible source-sink-identifications, it does.

\begin{lemma}\label{lemma:max_cut_identify_terminating}
\textsc{D-Max-Cut} is source-sink-invariant. 
\end{lemma}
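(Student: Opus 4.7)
The plan is to reduce by induction on the number of identifications to the case of a single source-sink identification. Thus it suffices to show that if $H$ is obtained from $D$ by identifying two source/sink-identifiable vertices $x$ and $y$ into a vertex $\gamma$, then $D$ admits a vertex partition of directed order at least $k$ if and only if $H$ does. Without loss of generality I would assume that $x$ and $y$ are both sources; the sinks case follows by reversing all edge directions and observing that this reversal preserves both the directed order of a vertex-partition and the status of being a source/sink after swapping the roles of sources and sinks.

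For the direction ``$H$ yes-instance $\Rightarrow D$ yes-instance'', I would lift a partition $(V(H)\setminus A', A')$ of cut-size at least $k$ to a partition of $D$ by setting $A = (A'\setminus\{\gamma\})\cup\{x,y\}$ if $\gamma \in A'$ and $A = A'$ otherwise. Since $x$ and $y$ end up on the same side of the partition in $D$, the cut edges of $(V(D)\setminus A, A)$ are in direct correspondence with those of $(V(H)\setminus A', A')$: edges not incident to $\{x,y,\gamma\}$ transfer unchanged, while each cut edge $\dir{\gamma v}$ of $H$ corresponds to the cut edges of the form $\dir{xv}$ and $\dir{yv}$ in $D$. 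Hence the cut size in $D$ is at least that in $H$.

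For the converse, I would start with a partition $(V(D)\setminus A, A)$ of $D$ of cut-size at least $k$ and modify $A$, if necessary, so that $\{x,y\}$ lies entirely on one side of the partition without decreasing the cut. The key observation is that because $x$ is a source, it has no incoming edges; hence the only cut edges involving $x$ have the form $\dir{xv}$ with $x\notin A$ and $v\in A$. Consequently, if $x\in A$ and $y\notin A$, moving $x$ out of $A$ removes no existing cut edges and may only create new ones (those $\dir{xv}$ with $v\in A\setminus\{x\}$), so the cut can only grow. After this adjustment $x$ and $y$ lie on the same side of the partition, and projecting to $H$ via the inverse of the lifting map of the previous paragraph yields a partition of $H$ of the required cut-size.

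I expect the main obstacle to be the precise bookkeeping of edges between $\{x,y\}$ and any common out-neighbors under the identification step: one must verify that the correspondence between cut edges of $D$ and cut edges of $H$ is exact (or at least size-preserving in the relevant direction) when $x$ and $y$ share out-neighbors. Once one fixes the convention (e.g.\ tracking edge-multiplicities so that identification of $x$ and $y$ with a common out-neighbor $v$ produces two distinct edges from $\gamma$ to $v$ in $H$, or equivalently handles the collapse case explicitly), the correspondence becomes exact and the case analysis above goes through cleanly.
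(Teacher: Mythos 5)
Your proof is correct and follows essentially the same route as the paper's: both hinge on the observation that a source can always be moved to the left-hand side of a vertex partition without decreasing its directed order (and dually for sinks), after which the identification manifestly preserves the cut. The common-out-neighbor subtlety you flag is genuine --- the paper's own proof silently asserts that the edges from $\gamma$ into $Z$ number $|F_1| + |F_2|$, which likewise requires the multiplicity-preserving convention you adopt (otherwise identifying two sources with a shared out-neighbor strictly decreases the maximum cut and the lemma fails) --- so your explicit handling of that case is, if anything, more careful than the original.
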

\begin{proof}
We will first show that, in any instance, there is a vertex partition of maximum order with all sources on the left-hand-side and all sinks on the right-hand-side. Then we will show that identifying the sources or the sinks in any such partition does not change its order.

Take any vertex partition $(V(D) \setminus X, X)$ of any digraph $D$. Recall that the order of $(V(D) \setminus X, X)$ is the number of edges in the edge separator $S_{(V(D) \setminus X, X)}^E$ consisting of all edges starting in $V(D) \setminus X$ and ending in $X$. Thus, if there is a source $s$ in $X$, then no edge incident with $s$ will be in $S_{(V(D) \setminus X, X)}^E$. Hence, letting $Y := X \setminus \{s\}$, the order of $(V(D) \setminus Y, Y)$ is at least that of $(V(D) \setminus X, X)$. Furthermore, a symmetric argument shows that moving all sinks to the right-hand-side of a vertex partition can only increase the order of a partition. Thus we have shown that, in every digraph $D$, there exists a vertex partition $(V(D) \setminus Z, Z)$ of maximum order with all sources in $V(D) \setminus Z$ and all sinks in $Z$. 

To conclude the proof, we consider what happens when we identify two sources in $D$ (the proof for identifications of sinks is symmetric). Let $s_1$ and $s_2$ be two sources in $V(D) \setminus Z$ and, for $i \in [2]$, let $F_i$ be the set of all edges starting from $s_i$ and ending in $Z$. Now consider the digraph $D'$ obtained from $D$ by identifying $s_1$ and $s_2$ into a new source $\gamma$. Since the set of edges starting from $\gamma$ and ending in $Z$ is precisely $F_1 \cup F_2$, it follows that $|S_{(V(D') \setminus Z, Z)}^E| = |S_{{(V(D) \setminus Z, Z)}}^E|$.
\end{proof}

We now show that if a source-sink-invariant problem is in $\fpt$ parameterized by underlying tree-width, then it also is when parameterized by directed branch-width. 

\begin{theorem}\label{thm:source-sink-alg}
If $\Pi$ is a source-sink-invariant problem which is in $\fpt$ parameterized by underlying tree-width, then $\Pi$ is in $\fpt$ parameterized by directed branch-width.
\end{theorem}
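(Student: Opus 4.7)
The plan is to reduce an instance $D$ of $\Pi$ with $\dbw(D) \leq k$ to an equivalent instance $D'$ whose underlying undirected graph has bounded tree-width, and then invoke the assumed FPT algorithm for $\Pi$ parameterized by underlying tree-width. The bridge between the two parameters is already provided by Lemma \ref{lemma:dbw_undir_lower_bound}, which states that the source-sink-split $D'$ of $D$ satisfies $\bw(u(D')) = \dbw(D)$, combined with Theorem \ref{thm:bounded_tw_bounded_bw}, which converts a bound on branch-width into one on tree-width.

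The algorithm I would give proceeds as follows. First, construct the source-sink-split $D'$ of $D$ in polynomial time (exactly as in the proof of Theorem \ref{thm:FPT_construction}): each source/sink $x$ with neighbors $y_1, \dots, y_\eta$ is replaced by $\eta$ independent sources/sinks, one for each $y_i$. This takes linear time in $|E(D)|$ and produces a digraph $D'$ with $|V(D')| \leq |V(D)| + 2|E(D)|$ and $|E(D')| = |E(D)|$, so $|D'|$ is polynomial in $|D|$. By Lemma \ref{lemma:dbw_undir_lower_bound} we get $\bw(u(D')) = \dbw(D) \leq k$, and by Theorem \ref{thm:bounded_tw_bounded_bw} we get $\tw(u(D')) \leq \max\{1, 3k/2\} - 1$, which is a function of $k$ alone. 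Second, since $D'$ is equivalent to $D$ under source-sink identifications and $\Pi$ is source-sink-invariant, $D \in \Pi$ if and only if $D' \in \Pi$. Third, run the assumed FPT algorithm for $\Pi$ on $D'$ with parameter $\tw(u(D'))$; this takes time $f(\tw(u(D'))) \cdot |D'|^{O(1)} \leq f'(k) \cdot |D|^{O(1)}$ for some computable $f'$.

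The only step that requires care is verifying that the polynomial blow-up from $D$ to $D'$ does not destroy the FPT guarantee, and that the parameter of the invoked algorithm is indeed bounded by a function of $k$ only. The first follows from the explicit size bounds above, and the second from chaining Lemma \ref{lemma:dbw_undir_lower_bound} with Theorem \ref{thm:bounded_tw_bounded_bw}. The most delicate conceptual point — and the reason source-sink-invariance is essential — is that the reduction changes the digraph: without source-sink-invariance, a yes-instance could become a no-instance (as indeed happens for \textsc{D-Hamilton-Path}, noted immediately before this theorem), so the hypothesis is used precisely to guarantee semantic equivalence of $D$ and $D'$ for $\Pi$.
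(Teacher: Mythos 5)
Your proposal is correct and follows essentially the same route as the paper's own proof: form the source-sink-split, apply Lemma \ref{lemma:dbw_undir_lower_bound} and Theorem \ref{thm:bounded_tw_bounded_bw} to bound the underlying tree-width by a function of $\dbw(D)$, use source-sink-invariance to preserve the answer, and run the assumed algorithm on the split. Your additional explicit accounting of the size blow-up and of where the invariance hypothesis is needed is consistent with (and slightly more detailed than) the paper's argument.
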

\begin{proof}
First obtain the source-sink-split $D$ of the input digraph $D'$ and note that this can be done in polynomial time (recall that $|V(D)|$ is $O(|V(D')|)$). Since $\bw(u(D)) = \dbw(D')$ (by Lemma \ref{lemma:dbw_undir_lower_bound}) we know that $\tw(u(D)) \leq 3 k /2 $ (by Theorem \ref{thm:bounded_tw_bounded_bw}). Since $\Pi$ is source-sink-invariant, $D$ will be a yes-instance if and only if $D'$ also is. Thus the result follows since we can apply the $\fpt$ algorithm to solve $\Pi$ on $D$.
\end{proof}

Theorem \ref{thm:source-sink-alg} allows us to deduce a algorithmic meta-theorem parameterized by directed branch-width by leveraging Courcelle's Meta-theorem for tree-width (Theorem \ref{thm:courcelle}). In particular Theorems \ref{thm:source-sink-alg} and \ref{thm:courcelle} imply that the model-checking problem for the subset of source-sink-invariant formulae in $\MSO_2$-logic is in $\fpt$ parameterized by directed branch-width. 

\begin{corollary}\label{corollary:our_courcelle}
Let $\phi$ be a formula in the $\MSO_2$ logic of graphs. If, for any digraph $D'$ equivalent to $D$ under source-sink identification, we have that $D' \models \phi$ if and only if $D \models \phi$, then there is an $\fpt$ algorithm parameterized by $\dbw(D) + |\phi|$ which decides whether $D$ models $\phi$. 
\end{corollary}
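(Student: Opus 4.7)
The plan is to combine Theorem \ref{thm:source-sink-alg} with Courcelle's Theorem. For a fixed $\MSO_2$ formula $\phi$ satisfying the invariance hypothesis, let $\Pi_\phi$ denote the decision problem whose instances are digraphs $D$ and whose yes-instances are exactly those with $D \models \phi$. The hypothesis in the statement translates directly into the assertion that $\Pi_\phi$ is source-sink-invariant in the sense of the definition preceding Theorem \ref{thm:source-sink-alg}.

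Next I would show that $\Pi_\phi$ is in $\fpt$ when parameterized by the underlying undirected tree-width $\tw(u(D))$ together with $|\phi|$. To do this, I would encode the digraph $D$ as the labeled undirected graph obtained from $u(D)$ by fixing, for each undirected edge $e \in E(u(D))$, an arbitrary reference orientation, and then attaching to $e$ a unary label indicating whether the actual direction in $D$ agrees with or reverses this reference. Any $\MSO_2$ sentence over digraphs can be rewritten in size $O(|\phi|)$ as an $\MSO_2$ sentence over this labeled undirected structure, and such a bounded-size labeling does not change the tree-width. The standard extension of Theorem \ref{thm:courcelle} to labeled graphs then yields the desired $\fpt$ algorithm.

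Finally, I would run the argument in the proof of Theorem \ref{thm:source-sink-alg} with $|\phi|$ treated as an additional parameter. Given an input $D$, compute its source-sink-split $D^\star$ in polynomial time; by Lemma \ref{lemma:dbw_undir_lower_bound} we have $\bw(u(D^\star)) = \dbw(D)$, so by Theorem \ref{thm:bounded_tw_bounded_bw} the tree-width of $u(D^\star)$ is bounded by a linear function of $\dbw(D)$. Running the $\fpt$ algorithm from the preceding paragraph on $D^\star$ decides whether $D^\star \models \phi$ in time $f(\dbw(D) + |\phi|) \cdot |D^\star|^{O(1)}$, and source-sink invariance of $\phi$ guarantees that this answer coincides with the answer for $D$. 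Since $|D^\star|$ is polynomial in $|D|$, the total running time is of the desired $\fpt$ form.

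The main obstacle lies in the translation step: one must verify carefully that the encoding of directed edges as labeled undirected edges is compatible with source-sink identification, so that the invariance hypothesis on $\phi$ over digraphs indeed lifts to an invariance property of the rewritten formula over labeled undirected graphs. This is a routine but technical check, amounting to confirming that the labeling scheme can be chosen equivariantly under the identifications used to pass from $D^\star$ back to $D$; once this is in place, the rest of the argument is a direct assembly of the three ingredients above.
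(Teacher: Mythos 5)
Your proposal is correct and follows essentially the same route as the paper, which simply combines Theorem \ref{thm:source-sink-alg} with Courcelle's Theorem (Theorem \ref{thm:courcelle}) after observing that the invariance hypothesis makes the model-checking problem for $\phi$ a source-sink-invariant problem that is $\fpt$ parameterized by underlying tree-width. The only remark worth making is that your final ``main obstacle'' is not actually one: the invariance hypothesis is needed only to conclude $D^\star \models \phi \iff D \models \phi$ at the level of digraphs, while the labeled-undirected encoding is purely internal to the algorithm run on $D^\star$ and never needs to interact with the identifications.
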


We note that Corollary \ref{corollary:our_courcelle} does not immediately imply the existence of an $\fpt$ algorithm for \textsc{D-Hamilton-Path} since this is not a source-sink-invariant problem. However notice that this problem is tractable if the input digraph has more than one source or more than one sink (such a digraph cannot be Hamiltonian). Thus, for problems that are tractable on inputs with more than some constant number of sources or sinks, we know that either the problem is tractable or we know that the underlying tree-width is bounded in terms of the directed branch-width. More generally, the following result shows that, for any fixed $\gamma$, problems which are polynomial-time solvable on digraphs with more than $\gamma$ sources or sinks and which are also in $\fpt$ parameterized by underlying tree-width (such as \textsc{D-Hamilton-Path}), belong to $\fpt$. 

\begin{corollary}\label{corollary:few_sources}
Let $\Pi$ be a decision problem on digraphs which is in $\fpt$ parameterized by underlying tree-width. If there exists a polynomial $p$ and integer $\gamma$ such that $\Pi$ can be decided in time $p(|D|)$ on any digraph with at least $\gamma$ sources or at least $\gamma$ sinks, then $\Pi$ is in $\fpt$ parameterized by directed branch-width.
\end{corollary}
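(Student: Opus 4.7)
The plan is to proceed by a straightforward case distinction on the number of sources and sinks of the input digraph $D$, using the hypothesis as an escape hatch when that number is large and using Corollary \ref{corollary:dbw_bw_number_of_sources_and_sinks} to control underlying tree-width when it is small.

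First, given an input digraph $D$ of directed branch-width at most $k$, I would compute the sets of sources and sinks of $D$ (this is clearly doable in polynomial time by scanning in- and out-degrees). If either set has size at least $\gamma$, invoke the assumed algorithm that decides $\Pi$ in time $p(|D|)$; this handles the first case in polynomial (hence fpt) time.

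Otherwise, the set $S$ consisting of all sources and all sinks of $D$ has $|S| < 2\gamma$. By Corollary \ref{corollary:dbw_bw_number_of_sources_and_sinks} we have
\[
\bw(u(D)) \;\leq\; \dbw(D) + |S| \;<\; k + 2\gamma,
\]
and then Theorem \ref{thm:bounded_tw_bounded_bw} upgrades this to a bound on underlying tree-width, namely $\tw(u(D)) \leq \max\{0, 3(k + 2\gamma)/2 - 1\}$. Since $\gamma$ is a fixed constant, this is a computable function of $k$ alone. Hence we can feed $D$ (together with its underlying undirected graph) to the hypothesised fpt algorithm for $\Pi$ parameterized by underlying tree-width and obtain the answer in time $g(k)\cdot |D|^{O(1)}$ for a suitable computable $g$.

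The combined algorithm runs in time $p(|D|) + g(k)\cdot |D|^{O(1)}$, which is fpt in $k$. There is no real obstacle here: all the structural work has already been done in Corollary \ref{corollary:dbw_bw_number_of_sources_and_sinks} and Theorem \ref{thm:bounded_tw_bounded_bw}; the only thing to verify carefully is that the dichotomy \emph{many sources/sinks} vs.\ \emph{few sources/sinks} does indeed cover every input and that in the second branch the parameter $\gamma$ truly acts as a constant so that the resulting bound on $\tw(u(D))$ depends only on $k$.
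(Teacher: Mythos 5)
Your proposal is correct and follows essentially the same route as the paper: test whether the number of sources or sinks reaches $\gamma$, use the polynomial-time algorithm if so, and otherwise bound $\bw(u(D))$ via Corollary \ref{corollary:dbw_bw_number_of_sources_and_sinks} and $\tw(u(D))$ via Theorem \ref{thm:bounded_tw_bounded_bw} before invoking the tree-width-parameterized algorithm. No gaps.
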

\begin{proof}
Let $D$ be a digraph and let $S$ and $T$ be the sets of all sources and sinks of $D$. Check (in time $O(|E(D)|)$) whether $|S| \geq \gamma$ or $|T| \geq \gamma$. If this is the case, then we decide $\Pi$ in time $p(|D|)$. Otherwise, by Corollary \ref{corollary:dbw_bw_number_of_sources_and_sinks}, we know that $\bw(u(D)) \leq \dbw(D) + |S| + |T| \leq \dbw(D) + 2\gamma$. Thus, since the $\Pi$ is in $\fpt$ parameterized by underlying tree-width and since $\tw(u(D)) \leq 3(\dbw(D) + 2\gamma)/2 -1$ (by Theorem \ref{thm:bounded_tw_bounded_bw}), the result follows.
\end{proof}

Finally we turn our attention to \textsc{D-Hamilton-Path} and \textsc{D-Max-Cut}. Recall that, when parameterized by underlying tree-width, these problems are in $\fpt$.

\begin{theorem}\cite{cygan2015parameterized}\label{thm:known_algs}
For any digraph $D$, \textsc{D-Hamilton-Path} and \textsc{D-Max-Cut} can be solved by algorithms running in time at most $f(\tw(D))|D|$ for a computable function $f$.
\end{theorem}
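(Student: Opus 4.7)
The plan is to compute, in $\fpt$-time, a (nice) tree decomposition of the underlying undirected graph $u(D)$ of width $O(\tw(u(D)))$ using Bodlaender's algorithm, and then run bottom-up dynamic programs tailored to each of the two problems. Since every edge of $D$ corresponds to an edge of $u(D)$, and since the direction of an edge is a local piece of information attached to it, the standard separator property of tree decompositions carries over without change to the directed setting.

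For \textsc{D-Max-Cut}, the DP table at a bag $V_t$ stores, for every function $\sigma:V_t\to\{L,R\}$, the maximum number of edges $\dir{xy}$ with $\sigma(x)=L$ and $\sigma(y)=R$ over all extensions of $\sigma$ to the vertices already forgotten in the subtree rooted at $t$. Introduce, forget, and join nodes admit the standard transitions: introducing $v$ branches on $\sigma(v)\in\{L,R\}$; forgetting $v$ takes a max over $\sigma(v)$; joining sums the two tables for fixed $\sigma$ restricted to the shared bag. This gives a $2^{O(\tw)}\cdot|D|$ algorithm.

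For \textsc{D-Hamilton-Path}, I would use the standard ``path-fragments'' DP for Hamiltonicity on tree decompositions, modified to track orientations. A state at bag $V_t$ consists of a pairing of a subset $U\subseteq V_t$ of ``active'' bag-vertices into the (ordered) endpoints of maximal directed path-fragments using only edges in the subtree, together with a flag for each $v\in V_t$ indicating whether $v$ is currently an endpoint with in- or out-degree $0$ in the fragment. On a forget node one requires that the forgotten vertex have its full in- and out-degree within the fragments ($0$ in, $1$ out for the source of the final path; $1$ in, $0$ out for the sink; $1$ in, $1$ out otherwise). On join nodes one merges compatible pairings, forbidding any union that creates a vertex of out-degree $\geq 2$ or in-degree $\geq 2$, or a closed directed cycle; at the root one accepts iff a single directed path covers all vertices. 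The number of states per bag is bounded by a function of $\tw$, giving an $f(\tw)\cdot|D|$ algorithm.

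The main obstacle is bookkeeping for \textsc{D-Hamilton-Path}: one must carefully enforce that every vertex, at the moment it is forgotten, has exactly the correct in- and out-degree contribution from fragments below, and that fragment concatenations in join nodes respect the head/tail orientation (so that two ``heads'' or two ``tails'' never get merged, and no directed cycle is formed). Once this is in place, correctness and the $f(\tw)\cdot|D|$ bound follow from standard arguments as in Cygan et al. Alternatively, a shorter (but less explicit) route is to note that both problems admit an $\MSO_2$-encoding once the edge-direction predicate is treated as a coloring of the edge set, and to appeal directly to Theorem~\ref{thm:courcelle}; this gives the stated $f(\tw(D))\cdot|D|$ running time at the cost of an unspecified $f$.
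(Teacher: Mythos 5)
The paper does not prove this statement at all---it is cited directly from Cygan et al.---and your explicit dynamic programs over a nice tree decomposition (L/R assignments per bag for \textsc{D-Max-Cut}, oriented path-fragment pairings for \textsc{D-Hamilton-Path}) are exactly the standard arguments from that reference, so your primary route is correct. One caveat on your alternative route: appealing to Theorem~\ref{thm:courcelle} works for \textsc{D-Hamilton-Path} (a single fixed formula), but for \textsc{D-Max-Cut} the threshold $k$ is an unbounded part of the input, so a fixed $\MSO_2$ decision formula does not suffice and you would need the optimization (LinEMSO-style) extension of Courcelle's theorem rather than the decision version stated in the paper.
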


Thus, by Lemma \ref{lemma:max_cut_identify_terminating}, Theorem \ref{thm:source-sink-alg} and Corollary \ref{corollary:few_sources} we can the following result that avoids the tower of exponentials given by an application of Courcelle's Theorem. 

\begin{corollary}\label{corollary:fast_algs}
For any digraph $D$ with directed branch-width $k$, \textsc{D-Hamilton-Path} and \textsc{D-Max-Cut} can be solved by algorithms running in time at most $f(k)|D|^2$ for a computable function $f$.
\end{corollary}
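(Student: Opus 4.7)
The plan is to dispatch the two problems separately, in both cases applying the known linear-time tree-width algorithm of Theorem \ref{thm:known_algs} on a suitable digraph whose underlying tree-width is bounded by a function of $k$. The infrastructure developed earlier in this section reduces this to verifying a small number of simple facts about each problem.

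For \textsc{D-Max-Cut}, Lemma \ref{lemma:max_cut_identify_terminating} already shows that the problem is source-sink-invariant, so Theorem \ref{thm:source-sink-alg} applies directly. Concretely, I would compute in polynomial time the source-sink-split $D'$ of $D$. By Lemma \ref{lemma:dbw_undir_lower_bound} we have $\bw(u(D')) = \dbw(D) = k$, and Theorem \ref{thm:bounded_tw_bounded_bw} therefore gives $\tw(u(D')) \leq 3k/2$. Invoking the algorithm of Theorem \ref{thm:known_algs} on $D'$ then decides \textsc{D-Max-Cut} in time $f(3k/2)\cdot |D'|$. Since each source (resp.\ sink) $v$ of degree $d$ is split into $d$ new sources (resp.\ sinks) of degree $1$, we have $|V(D')| \leq |V(D)| + 2|E(D)| = O(|D|^2)$, yielding the claimed $f(k)|D|^2$ bound.

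For \textsc{D-Hamilton-Path} the same reduction fails: the three-vertex example discussed just before Lemma \ref{lemma:max_cut_identify_terminating} shows the problem is not source-sink-invariant. I would instead appeal to Corollary \ref{corollary:few_sources} with $\gamma = 2$. The key observation is that a digraph with two or more sources (or two or more sinks) cannot contain a directed Hamiltonian path: a source has no in-edges, so it must occur as the first vertex of any directed Hamiltonian path in which it appears; since a directed Hamiltonian path has a unique first vertex, there can be at most one source, and symmetrically at most one sink. Hence on instances with at least $2$ sources or $2$ sinks the answer is \emph{no} and can be returned in $O(|E(D)|)$ time. On the remaining instances Corollary \ref{corollary:dbw_bw_number_of_sources_and_sinks} together with Theorem \ref{thm:bounded_tw_bounded_bw} gives $\tw(u(D)) = O(k)$, whereupon Theorem \ref{thm:known_algs} finishes the job on $D$ itself in time $f(k)|D|$, which is dominated by the $f(k)|D|^2$ bound claimed.

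There is no real obstacle: the conceptual heavy lifting is already encapsulated in Theorem \ref{thm:source-sink-alg} and Corollary \ref{corollary:few_sources}. What remains is only the source/sink-count observation for \textsc{D-Hamilton-Path} (immediate) and the degree-sum bound on the size of the source-sink-split for \textsc{D-Max-Cut} (routine bookkeeping), both of which slot directly into the general machinery.
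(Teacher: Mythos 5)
Your proposal is correct and follows essentially the same route as the paper: \textsc{D-Max-Cut} via source-sink-invariance (Lemma \ref{lemma:max_cut_identify_terminating}) and Theorem \ref{thm:source-sink-alg} applied to the source-sink-split, and \textsc{D-Hamilton-Path} via the observation that more than one source or sink forces a no-instance, which feeds into Corollary \ref{corollary:few_sources}. The only additions you make are the explicit degree-sum bound on the size of the split and the spelled-out argument for the source/sink count, both of which the paper leaves implicit.
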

\begin{proof}
\textsc{D-Max-Cut} is source-sink-invariant by Lemma \ref{lemma:max_cut_identify_terminating}. Thus we can solve it in time linear in the number of vertices of the source-sink-split of any input digraph $D$ by Theorems \ref{thm:known_algs} and \ref{thm:source-sink-alg}. So the result follows for \textsc{D-Max-Cut} since the source-sink-split has $O(|V(D)|^2)$ vertices. Now notice that any digraph with more than one source or more than one sink is a no-instance of \textsc{D-Hamilton-Path}. Thus, by Theorem \ref{thm:known_algs} and Corollary \ref{corollary:few_sources}, the result follows for \textsc{D-Hamilton-Path} as well.
\end{proof}

Since directed branch-width is not closed under directed topological minors, Theorem \ref{thm:ganian_thm} does not rule out the existence of a stronger algorithmic meta-theorem. Thus it is natural to ask whether it is possible to strengthen Corollary \ref{corollary:our_courcelle} to show fixed-parameter tractability of the $\MSO_2$-model-checking problem (i.e. not only for those formulae which are invariant under source-sink identification) parameterized by directed branch-width. We shall show that this is not possible by demonstrating a problem which is $\MSO_2$-expressible but which is $\mathbf{W[1]}$-hard when parameterized by directed branch-width. For this purpose, we define the \emph{Directed $2$-Reachability Edge Deletion problem} (denoted $\dred$). The \emph{$2$-reach of a vertex $x$ in a digraph $D$} is the set $\reach(D,x)$ of all vertices which are reachable from $x$ in $D$ via a directed path with $0$, $1$, or $2$ edges. 

    \begin{framed}
    \noindent
        \textbf{$\dred$.} \\
        \textbf{Input:} a digraph $D$ and two naturals $k$ and $h$ and a vertex $s$ of $D$.\\
        \textbf{Question:} is there an edge-subset $F$ of $D$ of cardinality at most $k$ such that \[|V(D) \setminus \reach(D - F, s)| \geq h?\]
    \end{framed}

\begin{lemma}\label{lemma:DRED_MSO2}
Every instance $(D,k,h,s)$ of the $\dred$ problem is $\MSO_2$-expressible with a formula of length bounded by a function of $k$ and $h$.
\end{lemma}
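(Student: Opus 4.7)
The plan is to construct an explicit $\MSO_2$ formula $\phi_{k,h,s}$ that expresses the $\dred$ decision question for the instance $(D,k,h,s)$. Since $k$ and $h$ are part of the input, I may use them as fixed constants in the syntax of the formula, so its length will automatically be bounded by a function of $k$ and $h$.

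First I would introduce $k$ first-order edge quantifiers and $h$ first-order vertex quantifiers, the former to pick a candidate deleted set $F = \{e_1,\ldots,e_k\}$ and the latter to witness $h$ distinct vertices outside $\reach(D - F, s)$. The formula then has the shape
\[
\phi_{k,h,s} \;:=\; \exists e_1,\ldots,e_k \in E \;\exists v_1,\ldots,v_h \in V \;\Bigl( \bigwedge_{i<j} v_i \neq v_j \;\wedge\; \bigwedge_{i=1}^{h} \psi(s,v_i,e_1,\ldots,e_k) \Bigr),
\]
where $\psi(s,v,e_1,\ldots,e_k)$ expresses that $v$ is not reachable from $s$ in $D - \{e_1,\ldots,e_k\}$ by a directed path of length at most $2$. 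Concretely, $\psi$ is the conjunction of $v \neq s$, the non-existence of any edge with tail $s$ and head $v$ distinct from every $e_j$, and the non-existence of an intermediate vertex $w$ together with edges $e = \dir{sw}$ and $f = \dir{wv}$ that both differ from every $e_j$. Each conjunct uses only the standard $\MSO_2$ signature of a digraph (vertex sort, edge sort, and the tail/head incidence functions), so the whole formula is a legitimate $\MSO_2$ sentence over $D$ once $s$ is treated as a constant.

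The crucial observation that makes this work is that although $\MSO_2$ cannot express transitive reachability in general, the operator $\reach$ appearing in $\dred$ concerns only paths of length at most $2$. Such bounded-length reachability is a first-order property and can therefore be written out explicitly. A direct count shows that $|\phi_{k,h,s}|$ is $O(h^2 + h k^2)$, which is bounded by a function of $k$ and $h$ alone, as required.

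The main difficulty here is conceptual rather than technical: one has to notice that the definition of $\dred$ deliberately restricts attention to length-$2$ reachability precisely so that the problem becomes $\MSO_2$-expressible even though general reachability is not. Once that observation is made, the formula essentially writes itself and the size bound is immediate, so no further obstacle arises.
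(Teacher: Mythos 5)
Your construction is correct and follows essentially the same route as the paper: both proofs exploit the fact that length-at-most-$2$ reachability avoiding a prescribed edge set is first-order expressible, and both bound the formula length quadratically in $k$ and $h$. The only difference is that you quantify the deleted set via $k$ individual first-order edge variables where the paper uses a single monadic edge-set variable $F$ together with a cardinality formula $\neg\texttt{card}_{\geq k+1}(F)$; this is immaterial (modulo the degenerate case $E(D)=\emptyset$, where your $\exists e_1,\ldots,e_k$ has no witnesses, which is worth a one-line remark or a convention).
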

\begin{proof}
We provide an $\MSO_2$ encoding of the $\dred$ problem via a formula whose length depends only on $k$ and $h$. Let $(D, k, h, s)$ be an instance of the $\dred$ problem. 
A digraph $D$ is encoded as a relational structure with ground set $V(D) \cup E(D)$ and a binary incidence relation. We shall write $E(x,y)$ as shorthand to denote that there is an edge $\dir{xy}$ incident with $x$ and $y$.
For an edge relational variable $F$ and edge-variable $\dir{xy}$ we write $F(\dir{xy})$ to denote that $\dir{xy}$ is in the edge-set $F$. 

Note that there is a directed walk from $s$ to some vertex $t$ with $0$, $1$ or $2$ edges none of which intersect a given edge-set $F$ if and only if at least one of the following formulae holds:
\begin{align*}
\texttt{path}_0(F, t) &:= s = t \text{ or}\\
\texttt{path}_1(F,t) &:= E(s,t) \land \neg F(\dir{st}) \text{ or} \\
\texttt{path}_2(F,t) &:= \exists x \; E(s,x) \land E(x,t) \land \neg F(\dir{sx}) \land \neg F(\dir{xt}).
\end{align*}
Thus, in a digraph $D$, a vertex $t$ is in $\reach(D - F, s)$ if and only if the following formula holds.
\begin{equation}\label{eqn:MSO_formula}
\texttt{canReach}(F, t) := \bigvee_{i \in \{0,1,2\}}\texttt{path}_i(F, t)    
\end{equation}
Using Equation \ref{eqn:MSO_formula}, we determine whether there are at least $h$ vertices which cannot be reached from $s$ in $D - F$ with a path on $0$, $1$ or $2$ edges; this is encoded as follows:
\[ \texttt{unreachable}_{\geq h}(F) := \exists x_1 \ldots \exists x_{h} \bigwedge_{1 \leq i < j \leq h} (x_i \neq x_j) \bigwedge_{i \in [h]} \neg \texttt{canReach}(F, x_i).\]
Next we need to establish a formula that determines whether an edge set $F$ contains at least $k$ edges; we do so as follows: 
\begin{align*}
  \texttt{card}_{\geq k}(F) := \exists e_1 \ldots \exists e_k \;  \bigwedge_{1 \leq i < j \leq k} (e_i \neq e_j) \bigwedge_{1 \leq i \leq k} F(e_i).
\end{align*}
Finally we can use the formulae $\texttt{card}_{\geq k}(F)$ and $\texttt{reach}_{\geq h}(F, s)$ to encode the $\dred$ problem as
\[ \exists F \; \texttt{unreachable}_{\geq h}(F) \land \neg \texttt{card}_{\geq k + 1}(F). \] 
Note that the length of $\texttt{unreachable}_{h}(F)$ is quadratically bounded by a function of $h$ and that the length of $\texttt{card}_{\geq k + 1}(F))$ is quadratically bounded by a function of $k$. Thus the formula-length of the entire encoding depends quadratically only on $k$ and $h$, as desired.
\end{proof}

We will show that $\dred$ is $\mathbf{W[1]}$-hard on graphs of bounded directed branch-width when parameterized by $h$ and $k$. Our proof technique is almost identical to that used by Enright, Meeks, Mertzios, and Zamaraev for an analogous problem in a different setting (temporal graphs)~\cite{temp-edge-del}. For completeness, we give full details here. 

\begin{theorem}[\cite{temp-edge-del}]\label{thm:kitty}
On digraph classes of directed branch-width at most one the $\dred$ problem is $\mathbf{W[1]}$-hard when parameterized by the number $k$ of edges which can be deleted and the number $h$ of non-reached vertices after the edge-deletion. 
\end{theorem}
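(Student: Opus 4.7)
The plan is to give a parameterized reduction from the (ordinary) \textsc{Clique} problem, which is known to be $\mathbf{W[1]}$-hard parameterized by clique size. Given an instance $(G, k)$ of \textsc{Clique}, I would construct an instance $(D, k, h, s)$ of $\dred$ with $h := k + \binom{k}{2}$ as follows: the vertex set of $D$ consists of a distinguished source $s$, a ``vertex-node'' $\hat{v}$ for each $v \in V(G)$, and an ``edge-node'' $\hat{e}_{uv}$ for each $uv \in E(G)$; the edges of $D$ are $s \to \hat{v}$ for every $v \in V(G)$ (call these \emph{star edges}) together with $\hat{u} \to \hat{e}_{uv}$ and $\hat{v} \to \hat{e}_{uv}$ for every $uv \in E(G)$ (call these \emph{fan edges}). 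Note that every vertex of $D$ lies within distance $2$ from $s$, so $\reach(D,s) = V(D)$.

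First I would verify $\dbw(D) \leq 1$ by exhibiting a suitable decomposition. For each $v \in V(G)$, group the star edge $s\to \hat v$ together with all fan edges emanating from $\hat v$ into a single subtree of the decomposition tree. Because $s$ is a source, every $\hat e_{uv}$ is a sink, and each $\hat v$ is internal only to paths whose incident edges all live in its own group, the separator associated with any edge of $T$ that lies \emph{between} groups is empty (appealing to Lemma \ref{lemma:isolated_elements}). Any edge of $T$ \emph{within} the group of $\hat v$ induces a separator containing at most the single vertex $\hat v$, so the width is $1$.

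Next I would prove that $(G,k)$ has a $k$-clique if and only if $(D,k,h,s)$ is a yes-instance. The forward direction is immediate: given a $k$-clique $S \subseteq V(G)$, delete the $k$ star edges $\{s\to \hat v : v \in S\}$; this renders $\hat v$ unreached for each $v \in S$ (contributing $k$ vertices) and also renders $\hat e_{uv}$ unreached whenever $u,v \in S$ (contributing exactly $\binom{k}{2}$ vertices, by the clique property), giving $h = k + \binom{k}{2}$ unreached vertices. For the backward direction, suppose $k$ deletions achieve $h$ unreached vertices; let $S\subseteq V(G)$ correspond to the $\ell$ deleted star edges, and classify each unreached $\hat e_{uv}$ as Case~1 (both $u,v\in S$), Case~2 (exactly one of $u,v\in S$, requiring one extra fan-edge deletion) or Case~3 (neither in $S$, requiring two extra fan-edge deletions). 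Writing $a$ and $b$ for the number of Case~2 and Case~3 disconnections respectively, the edge budget gives $\ell + a + 2b = k$ and the unreached count gives $\ell + |E(G[S])| + a + b \geq k + \binom{k}{2}$; subtracting yields $|E(G[S])| \geq \binom{k}{2} + b$. Since $|E(G[S])| \leq \binom{\ell}{2} \leq \binom{k}{2}$, we are forced into $b=0$, $\ell = k$ and $|E(G[S])| = \binom{k}{2}$, so $S$ is a $k$-clique of $G$.

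The main obstacle is the backward direction's counting argument: it is easy to believe that deleting star edges is ``the efficient move,'' but one must make this quantitative and verify that no mixed strategy of star-edge and fan-edge deletions can match $k + \binom{k}{2}$ unreached vertices unless the star edges selected correspond to a clique. Once this counting is done, since $(k,h)$ depends only on $k$, this is a valid parameterized reduction from \textsc{Clique} parameterized by $k$ to $\dred$ parameterized by $(k,h)$, establishing the claimed $\mathbf{W[1]}$-hardness even on digraphs of directed branch-width at most $1$.
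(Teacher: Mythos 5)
Your proposal is correct and follows essentially the same route as the paper: the identical gadget (source $s$, vertex-nodes, edge-nodes, with $h = k + \binom{k}{2}$) and the same reduction from \textsc{Clique}. The only differences are presentational --- you verify $\dbw(D)\leq 1$ by exhibiting a decomposition directly rather than via the source-sink-split lemma, and your backward direction does an explicit case analysis of fan-edge deletions where the paper more compactly takes $U$ to be \emph{all} $G$-vertices incident with a deleted edge and bounds the unreached count by $|U| + |E(G[U])|$; both arguments are sound.
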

\begin{proof}
We will describe a parameterized reduction from the $r$-\textsc{CLIQUE} problem (shown to be $\mathbf{W[1]}$-hard in~\cite{DowneyFellows1995}) which asks whether a given undirected graph $G$ contains an $r$-clique, where $r$ is taken as the parameter. Let $(G, r)$ be an $r$-\textsc{CLIQUE}-instance, let $n : = |V(G)|$ and $m := |E(G)|$. From $(G, r)$ construct a $\dred$-instance $(D, r, h, s)$ by defining:
\begin{align*}
V(D) &:= \{ s \} \cup V(G) \cup E(G) \quad \text{ (where } s \text{ is a new vertex not in } G),\\
E(D) &:= \{\dir{sx} : x \in V(G)\} \cup \{ \dir{xe}: x \in V(G) \text{ and } e \in E(G) \text{ s.t. } x \in e\}, \\
h &:=  r + \binom{r}{2}.
\end{align*}  
To see that this is indeed a parameterized reduction, note that $r$ clearly bounds itself, $h$ is a function of $r$ and $|V(D)|$ is polynomial in $|V(G)|$. Finally we claim that $\dbw(D) \leq 1$. To see this, take a source-sink-split $D'$ of $D$ and note that $u(D')$ is a collection of disjoint stars centered at elements of $V(G) \cap V(D)$; thus, by Lemma \ref{lemma:dbw_undir_lower_bound} we have $\dbw(D) = \bw(u(D')) \leq 1$. It remains to be shown that $(G,r)$ is a yes-instance for $r$-\textsc{CLIQUE} if and only if $(D, r, h, s)$ is a yes-instance for $\dred$.

Note that we can assume $r \geq 3$ since otherwise $r$-\texttt{CLIQUE} is trivial. Similarly, we may assume that $m \geq r + \binom{r}{2}$ since otherwise there are at most $r + 3$ vertices of degree at least $r - 1$ and hence we could solve $r$-\texttt{CLIQUE} on $G$ in time $O(r^3)$. 

If $(G, r)$ is a yes-instance, then let $U$ be a vertex-set inducing an $r$-clique in $G$. Then the edge-set $F = \{\dir{su}: u \in U\}$ witnesses that $(D, r, h)$ is a yes-instance since
\begin{align*}
|V(D) \setminus \reach(D - F ,s)| &= (|U| + |E(G[U])|) \; \text{ (by the definition of } F)\\
&=  r + \binom{r}{2} = h \; \text{ (since } U \text{ induces an } r\text{-clique).}
\end{align*}
Conversely, suppose $(D, r, h, s)$ is a yes-instance witnessed by a set $F$ of at most $r$ edges in $D$ such that $|V(D) \setminus \reach(D - F, s)| \geq h$. Let $U$ be the subset of vertices in $V(G) \cap V(D)$ which are incident with an edge in $F$.
Note that $U \subseteq V(G)$ and $|U| \leq r$ by the cardinality of $F$. Since $(D, r, h)$ is a yes-instance we know that
\begin{align*}
|V(D) \setminus \reach(D - F ,s)| &\geq (|U| + |E(G[U])|) \; \text{ (by the definition of } F)\\
&\geq r + \binom{r}{2} \;  \text{ (since } (D, r, h, s) \text{ is a yes-instance)}
\end{align*}
and hence that $|U| + |E(G[U])| \geq r + \binom{r}{2}$. Thus $U$ must induce an $r$-clique in $G$.
\end{proof}

Lemma \ref{lemma:DRED_MSO2} and Theorem \ref{thm:kitty} show that that, unless $\fpt = \mathbf{W[1]}$, the $\MSO_2$-model-checking problem is not in $\fpt$ on classes of bounded directed branch-width parameterized by formula length.


\bibliography{dbw}


\end{document}